\newcommand{\Z}{\mathbb{Z}}
\newcommand{\R}{\mathbb{R}}
\newcommand{\C}{\mathbb{C}}
\newcommand{\HH}{\mathbb{H}}
\newcommand{\T}{\mathbb{T}}
\newcommand{\re}{\ensuremath{\mathop{\rm Re}\nolimits}} 
\newcommand{\im}{\ensuremath{\mathop{\rm Im}\nolimits}}
\newcommand{\dd}{\mathrm{d}}
\newcommand{\CP}{\mathbb{CP}}
\newcommand{\sff}{\mathrm{I\!I}}
\newcommand{\nk}{\overline{\nabla}} 
\newcommand{\SO}{\mathrm{SO}}
\renewcommand{\O}{\mathrm{O}}
\newcommand{\SU}{\mathrm{SU}}
\newcommand{\U}{\mathrm{U}}
\newcommand{\Sp}{\mathrm{Sp}}
\newcommand{\mf}[1]{\mathfrak{#1}}
\newcommand{\reP}{\ensuremath{\re \psi}}
\newcommand{\imP}{\ensuremath{\im \psi}}
\theoremstyle{plain} 
\newtheorem{thm}{Theorem}
\numberwithin{thm}{section}
\newtheorem*{thm*}{Theorem}
\newtheorem{lma}[thm]{Lemma} 
\newtheorem{crl}[thm]{Corollary}
\newtheorem{prop}[thm]{Proposition}
\newtheorem*{prop*}{Proposition}
\newtheorem{example}[thm]{Example}
\newtheorem*{conjecture*}{Conjecture}
\theoremstyle{remark}
\newtheorem{rmk}[thm]{Remark}
\newtheorem*{rmk*}{Remark}
\theoremstyle{definition}
\newtheorem{dfn}[thm]{Definition}
\newtheorem*{dfn*}{Definition}
\numberwithin{equation}{section}
\newcommand{\cp}{\CP^3}
\title{Special Lagrangians in nearly K{\"a}hler $\mathbb{CP}^3$}
\author{Benjamin Aslan} 
\newcommand{\Addresses}{%
 \bigskip
 \footnotesize
 \textsc{Department of Mathematics, University of West London,
  UK}\par\nopagebreak
 \textit{E-mail address: }\texttt{benjamin.aslan.17@ucl.ac.uk}}
\begin{document}
\maketitle
\begin{abstract}
 This article explores special Lagrangian submanifolds in $\mathbb{CP}^3$, viewed as a nearly Kähler manifold, from two different perspectives. Intrinsically, using a moving frame set-up, and extrinsically, using $\mathrm{SU}(2)$ moment-type maps. We describe new homogeneous examples, from both perspectives, and classify totally geodesic special Lagrangian submanifolds. We show that every special Lagrangian in $\mathbb{CP}^3$, or the flag manifold $\mathbb{F}_{1,2}(\mathbb{C}^3)$ admitting a symmetry of an $\SU(2)$ subgroup of nearly Kähler automorphisms is automatically homogeneous. 
\end{abstract}
Nearly Kähler manifolds were first introduced in the 1970s and in the last two decades fundamental questions about the structure and existence of these manifolds were settled \cite{nagy2002nearly,butruille2010homogeneous,foscolo2015new}, making them a trending topic in differential geometry. In dimension 6, they form a special class of $\SU(3)$-structures and are important for Riemannian geometry as they provide examples of Einstein manifolds. In addition, they are of interest in exceptional holonomy as their cones are torsion free $G_2$ manifolds, which makes nearly Kähler manifolds crucial for understanding $G_2$ manifolds with singularities. \par
One peculiarity of Lagrangian submanifolds of nearly Kähler manifolds is that they are automatically special Lagrangian.
Because of their simple definition they are natural objects to study in nearly Kähler geometry, following the strategy to understand an ambient space by studying its distinguished submanifolds.
Just as for $J$-holomorphic curves in nearly Kähler manifolds there are two additional lines of motivation to study Lagrangian submanifolds. The first one comes from Riemannian geometry, for any special Lagrangian in a nearly Kähler manifold is minimal. The second comes from special holonomy, for the cone of a special Lagrangian is coassociative in the $G_2$-cone of $M$.\par
In the last few decades, many constructions for special Lagrangian submanifolds of $S^6$ have been found and various subclasses of special Lagrangians have been classified, see for example \cite{vrancken2003special,lotay2011ruled}. More recently, the ambient spaces $\mathbb{F}$ and $S^3\times S^3$ have received attention, for example in \cite{bektacs2019lagrangian,storm2020lagrangian}. This article is dedicated to the ambient nearly Kähler space $M=\CP^3$. The main results of this article are the classification of totally geodesic special Lagrangians in \cref{prop: totally geodesic class} and of special Lagrangians admitting a symmetry of a 3-dimensional group of nearly Kähler automorphisms in \cref{thm: classification SL orbits}.
These results are obtained through two different approaches to describe special Lagrangians in $\cp$.\par
The first approach is intrinsic as it uses the structure equations describing a special Lagrangian. We derive them in the general nearly Kähler setting in \cref{sec: structure eqns in nk manifolds} and show in \cref{prop: bonnet} that for a homogeneous ambient space and a simply connected domain there is a unique Lagrangian immersion for every solution to the structure equations. \par
In \cref{sec: angle function SL}, we adapt these equations to the twistor fibration $\cp \to S^4$.
We introduce an angle function $\theta\colon L \to [0,\frac{\pi}{4}]$ parametrising the Lagrangian at a tangent level. Generically, $L$ intersects every twistor fibre transversally. The points where $\theta=\frac{\pi}{4}$ are those where this is not the case, and the intersection is then diffeomorphic to a circle. We identify Lagrangians with $\theta\equiv \frac{\pi}{4}$ as circle bundles over superminimal surfaces in $S^4$, a construction discovered in \cite{storm2020note} and in \cite{konstantinov2017higher}.\par
Finally, we classify all Lagrangians where $\theta$ takes the boundary value $0$. In fact, there are just two such examples and they are both homogeneous. We describe another somewhat surprising homogeneous example with $\theta\equiv\frac{1}{2}\arccos(\frac{7 \sqrt{2}}{5 \sqrt{5}})$ arising from the irreducible representation of $\SU(2)$ on $S^3(\C^2)$. We also show that the standard $\mathbb{RP}^3$ in $\mathbb{CP}^3$ is the only totally geodesic Lagrangian submanifold of $\CP^3$. \par
The second approach to exploring special Lagrangians in $\mathbb{CP}^3$ is extrinsic. In \cref{sec: SU2 SL}, we introduce $\SU(2)$ moment maps in nearly Kähler geometry. They encode the symmetry of the nearly Kähler manifold in a set of $\SU(2)$ equivariant functions $M\to \R^3\oplus \R$. We use these moment-type maps to show a general existence result of special Lagrangians with $\SU(2)$ symmetry, in \cref{crl: existence SL orbits} and to classify special Lagrangians admitting an action of a $\SU(2)$ group of automorphisms, in \cref{thm: classification SL orbits}. We show that they are, in fact, all homogeneous and describe the examples found in \cref{sec: angle function SL} extrinsically. We show an analogous result for the flag manifold $\mathbb{F}$ by studying the action of three-dimensional subgroups of $\SU(3)$ on $\mathbb{F}$.\par
Most of the material presented in this article originates from author's PhD thesis, \cite{aslan2022special}.
\\
\vspace{0.1cm}
\\
\begin{minipage}{\linewidth}
\textbf{Acknowledgement.} The author is grateful to Jason Lotay, Simon Salamon and Thomas Madsen for their advice and support. The review from Lorenzo Foscolo and Luc Vrancken of the author's PhD thesis has led to a significant improvement of the material.
This work was supported by the London Mathematical Society grant ECF-2021-01.
\end{minipage}
\\
\vspace{0.1cm}
\\
\section{Background}
 Let $(M,g,J,\omega)$ be a 6-dimensional almost Hermitian manifold. Then $M$ is called nearly Kähler provided there is a complex-valued three-form $\psi=\reP+i \imP\in \Lambda^{3,0}(M)$ defining an $\mathrm{SU}(3)$-structure satisfying
 \begin{align*}
  \dd \omega&=3  \reP \\
  \dd \imP&=-2  \omega\wedge \omega.
 \end{align*}
\par
\begin{rmk}
 Often, the Calabi-Yau case $\dd \omega=0, \dd \imP=0$ is also included in the definition of a nearly Kähler manifold. We choose to exclude this case, so there is no need to introduce the subclass of strictly nearly Kähler manifolds.
\end{rmk}
Every nearly Kähler manifold admits a unique connection $\nk$ with totally skew-symmetric torsion and holonomy contained in $\mathrm{SU}(3)$, i.e. $\nk g =\nk J =\nk \psi=0$, cf. \cite{gray1970nearly}. This connection is called the characteristic connection and related to the Levi-Civita connection by 
\begin{align}
\label{eqn: nk vs lc}
  g(\nk_X Y,Z)=g(\nabla_X Y,Z)+\frac{1}{2}\reP(X,Y,JZ),
\end{align}
see \cite{nkdeformations}. Examples of (compact) nearly Kähler manifolds are very scarce. In fact, there are only six known examples of compact simply-connected nearly Kähler manifolds.
\begin{prop}\upshape{\cite[Theorem 1]{butruille2010homogeneous}}
\label{prop: nk class}
 If $M=G/H$ is a homogeneous strictly nearly Kähler manifold of dimension six, then $M$ is one of the following: 
 $S^6=G_2/{\SU(3)}$, $S^3\times S^3 = \SU(2)^3/\Delta{\SU(2)}$,
  $\cp=\Sp(2)/{\U(1)\times \Sp(1)}$ or $\mathbb{F}=\SU(3)/{\T^2}$.
 \end{prop}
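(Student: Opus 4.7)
The plan is to combine Gray's structural result for nearly Kähler 6-manifolds with the representation theory of compact Lie groups. Gray's theorem states that a strictly nearly Kähler 6-manifold is Einstein with positive scalar curvature, so by Myers' theorem together with completeness (automatic for a homogeneous space) $M$ is compact. Passing to a finite cover if necessary, we may assume $G$ is compact and connected.

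Fix a base point $o\in M$ with stabiliser $H$, and pick a reductive decomposition $\mathfrak{g}=\mathfrak{h}\oplus \mathfrak{m}$ identifying $\mathfrak{m}\cong T_oM$. Since the isotropy representation preserves the $\SU(3)$-structure $(g,J,\psi)$, it factors through $\SU(3)\subset \SO(\mathfrak{m})$, and after quotienting by the ineffective kernel one has $\mathfrak{h}\hookrightarrow \mathfrak{su}(3)$. The classification then reduces to determining which compact Lie algebras $\mathfrak{g}$ contain a subalgebra $\mathfrak{h}\subset \mathfrak{su}(3)$ with a six-dimensional $\mathrm{Ad}(H)$-invariant complement carrying an invariant $\SU(3)$-structure that satisfies
\begin{align*}
 \dd \omega = 3\reP, \qquad \dd\imP=-2\omega\wedge\omega.
\end{align*}

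The argument proceeds by case analysis on $\dim\mathfrak{h}\in\{0,1,2,3,4,8\}$, the possible dimensions of subalgebras of $\mathfrak{su}(3)$. For $\dim\mathfrak{h}=8$, the isotropy is $\SU(3)$ acting on $\mathfrak{m}\cong \mathbb{R}^6$ by the standard representation, and the only compact Lie algebra of dimension $14$ admitting this embedding is $\mathfrak{g}_2$, producing $S^6$. For $\dim\mathfrak{h}=0$ the space $M$ is a compact Lie group of dimension six; analysing left-invariant $\SU(3)$-structures singles out $\SU(2)\times \SU(2)$, diffeomorphic to $S^3\times S^3$ and homogeneous under $\SU(2)^3/\Delta\SU(2)$. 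For $\dim\mathfrak{h}=4$ with $\mathfrak{h}\cong\mathfrak{u}(2)$, rank and dimension constraints on the $10$-dimensional compact $\mathfrak{g}$ force $\mathfrak{g}=\mathfrak{sp}(2)$, yielding $\cp$. Finally, $\dim\mathfrak{h}=2$ with toral isotropy and six-dimensional complement forces $\mathfrak{g}=\mathfrak{su}(3)$, giving $\mathbb{F}$.

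The remaining dimensions $1$ and $3$, as well as alternative ambient algebras for the surviving dimensions, must be excluded. This exclusion is the main obstacle: one must systematically dispose of every remaining pair $(\mathfrak{g},\mathfrak{h})$, either by decomposing $\mathfrak{m}$ into irreducible $\mathfrak{h}$-summands and invoking Schur's lemma to rule out the existence of any $\mathrm{Ad}(H)$-invariant $\SU(3)$-structure, or by showing that every invariant structure satisfying the equations is in fact Kähler, contradicting strictness (as happens for the symmetric presentation $\cp = \SU(4)/\mathrm{S}(\U(3)\times \U(1))$). Once the four candidates are isolated, existence is verified constructively: the invariant $\SU(3)$-structure on $\mathfrak{m}$ is written down, the defining equations are checked at the identity coset, and $G$-invariance extends them globally.
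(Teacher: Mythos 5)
First, a point of reference: the paper does not prove this statement at all; it is quoted verbatim from Butruille (\cite[Theorem 1]{butruille2010homogeneous}), so the benchmark is the proof in the cited literature, which is a substantial classification argument, not a short lemma. Your sketch does follow the same general philosophy as that literature (Gray's theorem gives Einstein with positive scalar curvature, hence compactness via Myers; the isotropy representation lands in $\SU(3)$ because $H$ preserves $g$, $J$ and hence $\psi$; then one analyses pairs $(\mathfrak{g},\mathfrak{h})$ with $\mathfrak{h}\subset\mathfrak{su}(3)$ and a six-dimensional invariant complement). Up to that point nothing you say is wrong.

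However, there is a genuine gap: what you have written is a roadmap, not a proof, because the decisive steps are asserted rather than carried out. The heart of the theorem is precisely the exclusion you defer in your penultimate paragraph, and even the ``surviving'' cases are settled by claims that do not follow from the stated constraints. For $\dim\mathfrak{h}=4$, ``rank and dimension constraints'' do not force $\mathfrak{g}=\mathfrak{sp}(2)$: for instance $\mathfrak{su}(2)\oplus\mathfrak{su}(2)\oplus\mathfrak{su}(2)\oplus\R$ is a compact $10$-dimensional algebra containing $\mathfrak{u}(2)$, so one must analyse the isotropy representation on the complement to eliminate it. Likewise for $\dim\mathfrak{h}=2$ there are compact $8$-dimensional algebras other than $\mathfrak{su}(3)$ with a toral subalgebra (e.g. $\mathfrak{su}(2)\oplus\mathfrak{su}(2)\oplus\R^2$), and the case $\dim\mathfrak{h}=0$ (``analysing left-invariant $\SU(3)$-structures singles out $\SU(2)\times\SU(2)$'') is itself a nontrivial theorem, not an observation. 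The cases $\dim\mathfrak{h}=1,3$ are not touched. Moreover, Schur's lemma on the $\mathfrak{h}$-decomposition of $\mathfrak{m}$ only restricts the space of invariant $(\omega,\psi)$; one must still solve the structure equations $\dd\omega=3\reP$, $\dd\imP=-2\,\omega\wedge\omega$ within that space and rule out the Kähler solutions, which is where the real work lies (Butruille's argument also leans on additional structure theory rather than bare dimension counts). As it stands, your text identifies the correct strategy and the correct answer, but the classification content of the statement is exactly the part that remains unproved.
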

 In each case, the identity component of the group of nearly Kähler automorphisms is equal to $G$, see \cite{davila2012homogeneous}. There are infinitely many freely-acting finite subgroups of the automorphism group of the homogeneous nearly Kähler $S^3\times S^3$, cf. \cite{cortes2015locally}. \par
 In addition, there are two known examples of compact, simply-connected nearly Kähler manifolds which are not homogeneous. They were constructed by Foscolo and Haskins via cohomogeneity one actions on $S^3\times S^3$ and $S^6$ \cite{foscolo2015new}. 
\par 
A 3-dimensional submanifold $L$ of a nearly Kähler manifold is called Lagrangian if $\omega|_L=0$. For the general set-up, we will also work with the more flexible notion of an immersed Lagrangian submanifold, i.e. we  a smooth immersion $\iota\colon L \to M$ such that $\iota^*\omega=0$. However, all examples we encounter are embedded submanifolds.
\par
Because of the nearly Kähler identity $\dd \omega=3\reP$, Lagrangian submanifolds are automatically special Lagrangian. Special Lagrangians in nearly Kähler geometry share some important general properties with special Lagrangians in Calabi-Yau manifolds.
Every special Lagrangian $L$ in $M$ is minimal and orientable, see for example \cite{van2019lagrangian}.

Let $\sff_M$ be second fundamental form of $L$ in $M$.
 Then the cubic form $C(X,Y,Z)=\omega (\sff_M(X,Y),Z)$
 is fully-symmetric, i.e. an element of $\Gamma(S^3(T^*L))$, and traceless when contracted in any two components, see \cite{schafer2010decomposition}.
The cubic form $C$ is also called the fundamental cubic of $L$ and takes values in the intrinsic bundle $\Gamma(S^3(T^*L))$. This means that in order to study special Lagrangians where $C$ satisfies special properties one does not need to specify the normal bundle of $L$. One such special property would be that $C$, or equivalently the second fundamental form, is a parallel section. However, it turns out that this assumption is rather restrictive. Any such Lagrangian is automatically totally geodesic \cite[Theorem 1.1]{zhang2016lagrangian}.\par 
Another special property of $C$ is that it admits symmetries. This approach has been developed in \cite{bryant2006second} for special Lagrangian submanifolds of $\C^3$. By picking a frame in a point $x\in L$ one regards $C$ as a harmonic polynomial of degree three in three variables, i.e. an element of $\mathcal{H}_3(\R^3)$, which is a seven-dimensional vector space. The space $\mathcal{H}_3(\R^3)$ as an $\SO(3)$ module and a generic element in $\mathcal{H}_3(\R^3)$ does not have any symmetries in $\SO(3)$. The possible symmetry groups are classified in \cite[Proposition 1]{bryant2006second}. 
The classification gives a natural ansatz for finding special Lagrangian submanifolds. Impose one of the pointwise symmetries above to every point in $L$. This ansatz has led to the construction of new special Lagrangians in the Calabi-Yau $\C^3$ in \cite{bryant2006second} and in the nearly Kähler $S^6$ \cite{vrancken2003special,lotay2011ruled}. For $\CP^3$ however, this ansatz is less fruitful since the curvature tensor is more complicated so we do not have $\SO(3)$-freedom to change frames as we will see later. However, this framework gives us a way to categorise examples of special Lagrangian that are constructed in different ways. \par
The following result is known for Calabi-Yau manifolds and the nearly Kähler $S^6$ but it holds for any nearly Kähler manifold.
\begin{prop}
\label{prop: Cartan test SL}
Every real analytic surface on which $\omega$ vanishes can locally be uniquely thickened to a special Lagrangian submanifold in $M$.
 Special Lagrangian submanifolds in a nearly Kähler manifold locally depend on two functions of two variables.
\end{prop}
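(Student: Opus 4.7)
The plan is to apply the Cartan--Kähler theorem to the exterior differential system (EDS) on $M$ whose integral $3$-manifolds are the special Lagrangians. Let $\mathcal{I}$ be the differential ideal on $M$ algebraically generated by $\omega$ and $\dd\omega = 3\reP$; since $\dd\reP = \tfrac{1}{3}\dd^2\omega = 0$, this ideal is already $\dd$-closed. An integral $3$-manifold of $\mathcal{I}$ is a submanifold $L$ with $\iota^*\omega = 0$, which is automatically special Lagrangian as noted earlier in the excerpt.

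Next I would compute the polar spaces along a generic integral flag $E_0 \subset E_1 \subset E_2$ at $p$. Since $(\omega,\psi)$ agrees pointwise with the standard Hermitian structure on $\R^6$, this is linear algebra: one finds $\dim H(E_0) = 6$, $\dim H(E_1) = 5$, and $\dim H(E_2) = 3$, the three $1$-forms $\iota_{v_1}\omega$, $\iota_{v_2}\omega$, $\iota_{v_2}\iota_{v_1}\reP$ defining $H(E_2)$ being linearly independent. The Cartan characters are therefore $s_1 = 1$ and $s_2 = 2$, and since $\dim H(E_2)/E_2 = 1$ the extension of $E_2$ to an integral $3$-plane is unique, giving $s_3 = 0$. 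The space $V_3(p)$ of integral $3$-elements at $p$ is the ``special Lagrangian Grassmannian'' $\SU(3)/\SO(3)$ of dimension $5$, which equals the Cartan bound $s_1 + 2s_2 + 3s_3 = 1 + 4 + 0 = 5$, so $\mathcal{I}$ is involutive.

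Any real analytic surface $\Sigma$ with $\omega|_\Sigma = 0$ is trivially an integral $2$-manifold of $\mathcal{I}$ (the $3$-form $\reP$ restricts to zero on a surface for dimensional reasons), so Cartan--Kähler produces a unique real analytic integral $3$-manifold thickening $\Sigma$. The generality statement ``two functions of two variables'' then reads off from the characters: $s_3 = 0$ forces the thickening to be rigid once $\Sigma$ is fixed, and the free data is governed by the last non-zero character $s_2 = 2$.

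The one step requiring genuine attention is the pointwise linear algebra at level $2$: one must check that $\iota_{v_2}\iota_{v_1}\reP$ is never in the span of $\iota_{v_1}\omega, \iota_{v_2}\omega$ for any isotropic $E_2 = \operatorname{span}(v_1,v_2)$, so that the polar codimension there is indeed $3$ (and not $2$). In the standard $\SU(3)$ model with $v_1 = \partial_{x^1}$, $v_2 = \partial_{x^2}$ these three forms read $-\dd y^1$, $-\dd y^2$, $\dd x^3$, which are manifestly independent; transitivity of $\SU(3)$ on isotropic $2$-planes reduces the general case to this one and guarantees that the flag is regular, so that Cartan--Kähler applies.
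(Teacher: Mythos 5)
Your proposal is correct and is essentially the argument the paper invokes: the paper's proof simply cites Lotay's Cartan--Kähler/Cartan-test computation and notes that, being pointwise linear algebra in the $\SU(3)$-structure ($c_0=0$, $c_1=1$, $c_2=3$, so $s_1=1$, $s_2=2$, $s_3=0$, matching $\dim V_3=\dim \SU(3)/\SO(3)=5$), it holds verbatim for any nearly Kähler manifold, which is exactly what you carry out in detail. The only nitpick is conventional: with the paper's $\psi=-i\,\omega_1\wedge\omega_2\wedge\omega_3$ the third polar form at the standard isotropic $2$-plane is $\dd y^3$ rather than $\dd x^3$, which of course does not affect the independence or the character count.
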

\begin{proof}
 See \cite{lotay2011ruled}, the proof is based on the fact that the Cartan test holds and thus holds for any $\SU(3)$ structure.
\end{proof}
Infinitesimal deformations of nearly Kähler manifolds correspond to eigensections of a rotation operator on $L$ \cite{kawai2014deformations}. It is shown in \cite{van2019lagrangian}, that the moduli space of smooth
Lagrangian deformations of special Lagrangians is a finite dimensional analytic variety. All formally unobstructed infinitesimal deformations are smoothly unobstructed.
\subsection{The nearly Kähler Structure on $\CP^3$}
\label{sec: nk structure cp3}
The nearly K{\"a}hler structure on $\cp$ can be defined through the twistor fibration $\cp \to S^4$. The fibres are projective lines and totally geodesic for the Kähler structure on $\cp$. Since $\cp$ is a sphere bundle inside $\Lambda^2_-(S^4)$ the twistor fibration has a natural connection $T\cp=\mathcal{H}\oplus \mathcal{V}$. The nearly Kähler structure on $\cp$ is defined via the Kähler structure by squashing the metric and reversing the almost complex structure on the vertical fibres. \par
For explicit computations it is convenient to define the nearly K{\"a}hler structure from the homogeneous space structure $\cp=\mathrm{Sp}(2)/{S^1\times S^3}$.
Identify $\mathbb{H}^2$ with $\C^4$ via $\mathbb{H}=\C\oplus j \C$. This identification gives an action of $\mathrm{Sp}(2)$ on $\C^4$ which descends to $\cp$ and acts transitively on that space. The stabiliser of the element $(1,0,0,0)\in \C^4$ is 
\[\left\{ \begin{pmatrix} z & 0 \\ 0 & q \end{pmatrix} \mid z \in S^1\subset \C, \quad q \in S^3\subset \mathbb{H}\right\}\]
which shows $\cp=\mathrm{Sp}(2)/{S^1\times S^3}$. 
Following \cite{xu2010pseudo}, consider the Maurer-Cartan form on $\mathrm{Sp}(2)$ which can be written in components as
\begin{align}
\label{MCeq}
\Omega_{MC}=\begin{pmatrix} i\rho_1 +j\overline{\omega_3}& -\frac{\overline{\omega_1}}{\sqrt{2}}+j\frac{\omega_2}{\sqrt{2}}\\ \frac{\omega_1}{\sqrt{2}}+j\frac{\omega_2}{\sqrt{2}} & i\rho_2+j\tau \end{pmatrix}. 
\end{align}
Since $\Omega_{MC}$ has values in $\mathfrak{sp}(2)$, the one-forms $\omega_1,\omega_2,\omega_3$ and $\tau$ are complex-valued and $\rho_1, \rho_2$ are real-valued. 
The equation $\dd \Omega_{MC}+[\Omega_{MC},\Omega_{MC}]=0$
implies the torsion identity
\begin{align}
\label{firststructure}
 \mathrm{d}\begin{pmatrix} \omega_1 \\ \omega_2 \\ \omega_3 \end{pmatrix}=-\underbrace{\begin{pmatrix} i(\rho_2-\rho_1) &-\overline{\tau} & 0 \\ \tau & -i(\rho_1+\rho_2) & 0 \\ 0 & 0 & 2i\rho_1 \end{pmatrix}}_{A_\omega:=} \wedge\begin{pmatrix} \omega_1 \\ \omega_2 \\ \omega_3 \end{pmatrix} +\begin{pmatrix} \overline{\omega_2}\wedge \overline{\omega_3} \\ \overline{\omega_3}\wedge \overline{\omega_1} \\ \overline{\omega_1}\wedge \overline{\omega_2} \end{pmatrix}.                                                                                                                                                                                                                
\end{align}
and the curvature formula
\[\dd A_\omega = - A_ {\omega}\wedge A_{\omega} + \begin{pmatrix} \omega_1\wedge \overline{\omega_1}-\omega_3\wedge\overline{\omega_3} & \omega_1\wedge \overline{\omega_2} & 0 \\
                                                 \omega_2\wedge \overline{\omega_1} & \omega_2\wedge \overline{\omega_2}-\omega_3\wedge \overline{\omega_3} & 0 \\
                                                 0 & 0 & -\omega_1\wedge \overline{\omega_1}-\omega_2\wedge \overline{\omega_2}+2\omega_3\wedge \overline{\omega_3}
                                                 \end{pmatrix}.
\]
The nearly K{\"a}hler structure on $\cp$ is defined by declaring the forms $s^*\omega_1,s^*\omega_2$ and $s^*\omega_3$ to be unitary $(1,0)$ forms for any local section $s$ of the bundle $\Sp(2)\to \cp$. The resulting almost complex structure and metric do not depend on the choice of $s$. The nearly Kähler forms $\omega,\psi$ are pullbacks of
\begin{align*}
 \frac{i}{2}\sum_{i=1}^3 \omega_i\wedge \overline{\omega}_i,\text{ and } -i \omega_1 \wedge \omega_2\wedge \omega_3,
\end{align*}
respectively.
In general, we will treat the nearly Kähler forms as basic forms on $\mathrm{Sp}(2)$. However, Killing vector fields typically have a simple expression in local coordinates. To contract the nearly Kähler forms on $\cp$ with Killing vector fields we pull back the local unitary $(1,0)$ forms $\omega_1,\omega_2,\omega_3$ on the chart $\mathbb{A}_0=\{Z_0\neq 0\}$ with the local section 
  \[s\colon \mathbb{A}_0\to \Sp(2),\quad  (1,Z_1,Z_2,Z_3) \mapsto \begin{pmatrix} h_1 |Z|^{-1} & -\overline{h}_1^{-1} \overline{h}_2 a \\ h_2 |Z|^{-1} & a \end{pmatrix}.\] Here, 
  \[|Z|^2=1+|Z_1|^2+|Z_2|^2+|Z_3|^2,\quad  h_1=1+j Z_1,\quad  h_2=Z_2+ j Z_3, \quad a=(1+\frac{|h_2|^2}{|h_1|^2})^{-1/2}.\]
This gives the following expressions for the pull-backs
\begin{align}
\label{eq: pull back local frame}
\begin{split}
 s^*\omega_1&=\sqrt{2}|Z|^{-2}( (\overline{Z_3}-\overline{Z_1}Z_2)\dd Z_1+ (1+|Z_1|^2)\dd Z_2) \\
 s^*\omega_2&=\sqrt{2}|Z|^{-2}((-\overline{Z_2}-\overline{Z_1}Z_3)\dd Z_1+(1+|Z_1|^2)\dd Z_3 ) \\
 s^*\omega_3&= |Z|^{-2} (\dd \overline{ Z_1}- \overline{Z_3} \dd \overline{ Z_2}+ \overline{Z_2} \dd \overline{ Z_3}).
 \end{split}
\end{align}
To show these formulae, note that the pullback of the Maurer-Cartan form via $s$ is 
\[s^*(\Omega_{MC})=\begin{pmatrix} \overline {h}_1 |Z|^{-1} & \overline{h_2} |Z|^{-1} \\ -h_2 h_1^{-1} a & a 
  \end{pmatrix} \begin{pmatrix} \mathrm{d}( |Z|^{-1} h_1) & d(-\overline{h}_1^{-1} \overline{h}_2 a)\\ \mathrm{d} ( |Z|^{-1} h_2) & \mathrm{d}a \end{pmatrix}.\]                                                                                                         
Combining this with \cref{MCeq} yields
\begin{align*}
 (i s^*\rho_1 +j s^* \overline{\omega_3})&=|Z|^{-2} (\overline{h_1} \mathrm{d}h_1 + \overline{h_2} \mathrm{d}h_2)+R \\
 \frac{1}{\sqrt{2}a |Z|}(s^*\omega_1+j s^*\omega_2)&=-h_2 h_1^{-1} \mathrm{d}h_1 + \mathrm{d}h_2,
 \end{align*}
where $R$ is a real term. Equations \ref{eq: pull back local frame} follow by splitting the quaternionic-valued differential forms on the right-hand side into their $\C$ and $j\C$ part.
\section{Structure Equations for Special Lagrangians}
\label{sec: structure eqns in nk manifolds}
The structure equations for a special Lagrangian manifold in Calabi-Yau $\C^3$ were established in \cite{bryant2006second} and for nearly Kähler $S^6$ in \cite{lotay2011ruled}. We generalise the equations  to the setting of a general nearly Kähler manifold. The main difference is the appearance of an extra curvature term. We characterise nearly Kähler manifolds by differential identities on the frame bundle, as done in \cite{bryant2006geometry}. If an index appears on the right-hand side but not on the left-hand side of an equation, summation over the index set $\{1,2,3\}$ is implicit.\par
Let $M^6$ be a nearly Kähler manifold and consider the $\SU(3)$-frame bundle $P_{\SU(3)}$. Let $(\zeta_1,\zeta_2,\zeta_3)\in \Omega^1(P_{\SU(3)},\C^3)$ be the tautological one-forms on $P_{\SU(3)}$ and let $\phi\in \Omega^1(P,\mathfrak{su}(3))$ be the nearly Kähler connection one-form on $P_{\SU(3)}$, giving the torsion relation
\begin{align}
\label{eqn: torsion-relation}
\dd \begin{pmatrix} \zeta_1 \\ \zeta_2 \\ \zeta_3 \end{pmatrix} = -\phi \wedge \begin{pmatrix} \zeta_1 \\ \zeta_2 \\ \zeta_3 \end{pmatrix} +\begin{pmatrix} \overline{\zeta}_2 \wedge \overline{\zeta}_3 \\ \overline{\zeta}_3 \wedge \overline{\zeta}_1 \\ \overline{\zeta}_1 \wedge \overline{\zeta}_2 \end{pmatrix} 
\end{align}
and the curvature identity
\begin{align}
\label{eqn: curvature-relation}
\dd \phi_{ij}=-\phi_{ik}\wedge \phi_{kj}+K_{ijpq} \zeta_{q}\wedge \overline{\zeta_p}.
\end{align}
In particular, the curvature of $\nk$ is always of type $(1,1)$.
In \cite{bryant2006geometry} it is remarked that the tensor $K$ can be written as sum 
\[K_{ijpq}=K'_{ijpq}+\frac{3}{4}\delta_{pi}\delta_{qj}-\frac{1}{4}\delta_{ij}\delta_{pq}\]
where $K'$ has the following symmetries
\[K'_{ijpq}=K'_{pjiq}=K'_{iqpj}=\overline{K'_{jiqp}}, \text{ and } \sum_i K'_{iipq}=0.\]
The tensor $K'$ vanishes exactly when $M$ is the round six-sphere.
The nearly Kähler forms are expressed in terms of $\zeta_i$ by 
\begin{align}
\label{eq: nk forms from co-frame}
 \omega=\frac{i}{2}\sum_i \zeta_i \wedge \overline{\zeta_i}, \quad \psi=-i \zeta_1\wedge\zeta_2\wedge \zeta_3.
\end{align}
Note the difference from \cite{bryant2006geometry} in the convention for $\psi$ in order to satisfy the standard nearly Kähler integrability equations.\par
The torsion-relation \cref{eqn: torsion-relation} and curvature-relation \cref{eqn: curvature-relation} yield differential identities for the connection one-form and tautological one-form on the frame bundle $P_{\SU(3)}$. If $L$ is a special Lagrangian submanifold in $M$ then one obtains more differential identities because the frame bundle $P_{\SU(3)}$ admits a natural reduction to an $\SO(3)$ bundle over $L$. The reason for this is that, at the tangent level, a Lagrangian subspace looks like $\R^3$ in $\C^3$, which defines the restriction
\[P_{\SO(3)}=\{p\colon \C^3\to TM,\quad p \in  P_{\SU(3)}|_L \mid  p(\R^3)=TL\}.\]
If $\dd z_1,\dd z_2,\dd z_3$ are the standard complex-valued one-forms on $\C^3$ then $\R^3\subset \C^3$ is characterised as the 3-dimensional subspace of $\C^3$ on which the imaginary parts of $\dd z_i$ vanish. Similarly, our aim is to describe the reduction $P_{\SO(3)}$ as the vanishing set of one forms on $P_{\SU(3)}$. To that end, split the forms $\zeta_i= \sigma_i+i \eta_i$ and $\phi=\alpha+i\beta$ into real and imaginary part. The bundle $P_{\SO(3)}$ is now defined by imposing the condition $\eta_i=0$. \par
This characterisation implies more differential identities.
From the torsion-relation we get
\begin{align*}
 \dd \sigma_i&=-\alpha_{ij} \wedge \sigma_j +\beta_{ij}\wedge \eta_j+ \sigma_k \wedge \sigma_l-\eta_k\wedge \eta_l \\
 \dd \eta_i &=-\beta_{ij} \wedge \sigma_j-\alpha_{ij}\wedge \eta_j -\sigma_k\wedge \eta_l -\eta_k \wedge \sigma_l
\end{align*}
where $(i, k, l)$ is an cyclic permutation of $(1, 2, 3)$.
The condition $\eta_i=0$ implies $\beta_{ij}\wedge \sigma_j=0$. By Cartan's lemma, we have $\beta_{ij}=h_{ijk} \sigma_k$ or $\beta=h \sigma$ where $h$ is a fully symmetric three-tensor. In fact, this tensor corresponds to the fundamental cubic up to a factor, just as in the case of special Lagrangians in $\C^3$ or in $S^6$.
\par
On the reduced bundle, we split $K$ into real and imaginary part, \[K_{ijpq} \zeta_q \wedge \overline{\zeta_p}=K_{ijpq} \sigma_q \wedge \sigma_p=(R_{ijpq}+i S_{ijpq})\sigma_q \wedge \sigma_p=(-R_{ijpq}-i S_{ijpq})\sigma_p \wedge \sigma_q.\]
This also allows us to split the curvature identity into real imaginary part
\begin{align*}
 \dd \alpha_{ij}&=-\alpha_{ik}\wedge \alpha_{kj}+\beta_{ik}\wedge \beta_{kj}-R_{ijpq}\sigma_p \wedge \sigma_q\\
 \dd \beta_{ij}&=-\beta_{ik}\wedge \alpha_{kj}-\alpha_{ik} \wedge \beta_{kj}-S_{ijpq}\sigma_p \wedge \sigma_q.
\end{align*}
To write these equations more invariantly, let \[ [\sigma]=\begin{pmatrix} 0 & \sigma_3 & -\sigma_2 \\ - \sigma_3 & 0 & \sigma_1 \\ \sigma_2 & -\sigma_1& 0 \end{pmatrix}.\]
We can summarise the equations on the reduced bundle over $L$ in tensor notation
\begin{align}
\label{lagstructureeqns1}
\beta\wedge \sigma&=0 \\
\label{lagstructureeqns2}
\dd \sigma&=-\alpha \wedge \sigma -\frac{1}{2}[\sigma]\wedge \sigma\\
\label{lagstructureeqns4}
 \dd \alpha&=- \alpha \wedge \alpha+ \beta\wedge\beta- R \sigma \wedge \sigma \\
  \label{lagstructureeqns3}
 \dd \beta&= -\beta \wedge \alpha-\alpha\wedge \beta -S \sigma \wedge \sigma
\end{align}
where $(\sigma\wedge \sigma)_{ pq}=\sigma_p\wedge \sigma_q$. The matrix of one forms $\beta$ is completely defined by the symmetric tensor $h$. The advantage to work with $h$ is that its components are not one-forms but functions, allowing us to rewrite \cref{lagstructureeqns1}, \cref{lagstructureeqns4} and \cref{lagstructureeqns3}
\begin{align*}
\beta=h \sigma \quad
 \dd \alpha=- \alpha \wedge \alpha+h\sigma \wedge h \sigma+ \frac{3}{4}\sigma\wedge\sigma - R \sigma \wedge \sigma, \quad
   0=(\dd h +((h \alpha+\frac{1}{2} h[\sigma]))+S \sigma) \wedge \sigma.
\end{align*}
The Levi-Civita connection one-form of the induced metric on $L$ is $\alpha+\frac{1}{2}[\sigma]$. Note that the forms $\sigma$ differ by a factor 2 from the orthonormal one forms considered in \cite{lotay2011ruled}.
\par 
If $M=G/H$ is one of the homogeneous nearly Kähler manifolds then a special Lagrangian submanifold can locally be recovered from a solution to \cref{lagstructureeqns1}-\cref{lagstructureeqns4}, which we will make precise now. There is a splitting
$\mathfrak{g}=\mathfrak{h}\oplus \mathfrak{m}$
such that $\mathrm{Ad}_H(\mathfrak{m})\subset \mathfrak{m}$. The nearly Kähler structure then yields an $\mathrm{Ad}(H)$ invariant special unitary basis $\omega_1,\omega_2,\omega_3$ on $\mathfrak{m}\cong \C^3$. Up to a cover, $G$ embeds into the $\SU(3)$-frame bundle $P_{\SU(3)}$ via the adjoint action $H\to \SU(\mathfrak{m})$. 
Under this identification
\[\psi+(\zeta_1,\zeta_2,\zeta_3)\in \mathfrak{h}\oplus \C^3\cong \mathfrak{h}\oplus \mathfrak{m}\]
is the Maurer-Cartan form $\omega_G$ on $G$. In other words, the nearly Kähler connection is equal to the canonical homogeneous connection on $G\to M$, see \cite{butruille2010homogeneous}. The following proposition guarantees that for the homogeneous nearly Kähler manifolds we can locally recover the special Lagrangian from a solution of the structure equations. Since $\alpha$ and $\beta$ determine the first and second fundamental form, this can be viewed a Bonnet-type theorem.
\begin{prop}
\label{prop: bonnet}
 Let $M=G/H$ be a homogeneous nearly Kähler manifold, $L^3$ be a simply-connected three manifold and $\sigma\in \Omega^1(L,\R^3)$, defining a linearly independent co-frame at each point, $\alpha\in \Omega^1(L,\mathfrak{so}(3))$ and $\beta \in \Omega^1(L,S^2(\R^3))$ satisfying the equations \ref{lagstructureeqns1}-\ref{lagstructureeqns3}. Then there is a special Lagrangian immersion $L\to M$, unique up to isometries, with $\alpha,\beta$ determining the metric and second fundamental form of $L$ in $M$.
\end{prop}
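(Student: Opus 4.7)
The plan is to recognise the structure equations \ref{lagstructureeqns1}--\ref{lagstructureeqns3} as the pull-back of the Maurer--Cartan equation for $\mathfrak{g}$ along the would-be Lagrangian immersion, and then invoke the Cartan--Darboux fundamental theorem for Lie-group-valued $1$-forms to produce it. The key input is the identification, recalled just before the proposition, of the nearly Kähler connection on $G/H$ with the canonical homogeneous connection: under the reductive splitting $\mathfrak{g}=\mathfrak{h}\oplus\mathfrak{m}$ with $\mathfrak{m}\cong\C^3$, the Maurer--Cartan form of $G$ decomposes as $\omega_G=\phi+\zeta$, where $\phi\in\Omega^1(G,\mathfrak{h})$ is the nearly Kähler connection (taking values in $\mathfrak{h}\hookrightarrow\mathfrak{su}(3)$ via the isotropy representation) and $\zeta\in\Omega^1(G,\mathfrak{m})$ is the tautological part.

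My first step would be to assemble the data into a $\mathfrak{g}$-valued $1$-form $\Omega$ on the trivial principal $H$-bundle $Q=L\times H$: extend $\sigma,\alpha,\beta$ from $L$ to $Q$ equivariantly, then set the $\mathfrak{m}$-part of $\Omega$ to be the lift of $\sigma$ (viewed in $\R^3\subset\C^3$, consistent with the Lagrangian condition $\eta\equiv 0$) and the $\mathfrak{h}$-part to be the lift of $\alpha+i\beta$ combined with the Maurer--Cartan form of the $H$-factor. I would then verify $d\Omega+\frac{1}{2}[\Omega,\Omega]=0$ componentwise: the $\mathfrak{m}$-component recovers \ref{lagstructureeqns2}, with the torsion terms $\overline{\zeta}_k\wedge\overline{\zeta}_l$ appearing as the $-[\cdot,\cdot]_\mathfrak{m}$ part of the canonical connection, while the $\mathfrak{h}$-component recovers \ref{lagstructureeqns4}--\ref{lagstructureeqns3}, with the coefficients $R$ and $S$ arising as the real and imaginary parts of the $-[\cdot,\cdot]_\mathfrak{h}$ component restricted to the Lagrangian subspace $\R^3\subset\mathfrak{m}$.

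With the Maurer--Cartan equation in hand, Cartan--Darboux yields local primitives $F\colon U\to G$ with $F^*\omega_G=\Omega$, unique up to left translation by elements of $G$. Composing with $G\to G/H=M$ kills the $H$-ambiguity from the choice of local trivialisation of $Q$, so the resulting local maps $\pi(U)\subset L\to M$ glue, by simple-connectedness of $L$, into a globally defined smooth map $\iota\colon L\to M$ unique up to the $G$-action on $M$. That $\iota$ is an immersion is immediate from $\sigma$ being a coframe; that it is Lagrangian follows from $\eta\equiv 0$, which forces $\iota^*\omega=0$; and the induced Levi--Civita connection and second fundamental form are encoded in $\alpha+\frac{1}{2}[\sigma]$ and $\beta=h\sigma$. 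The main technical burden will be the Maurer--Cartan verification itself: tracking how the nearly Kähler curvature $K$ descends, through the isotropy representation $\mathfrak{h}\hookrightarrow\mathfrak{su}(3)$ and the $\SO(3)$-reduction imposed by the Lagrangian condition, to the specific coefficients $R_{ijpq}$ and $S_{ijpq}$ appearing in the structure equations; once this algebraic compatibility is pinned down, the rest is the standard Cartan--Darboux integration machinery.
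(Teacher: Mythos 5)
Your proposal is correct and follows essentially the same route as the paper: the paper simply sets $\gamma=\alpha+i\beta+(\sigma_1,\sigma_2,\sigma_3)\in\Omega^1(L,\mathfrak{h}\oplus\mathfrak{m})\cong\Omega^1(L,\mathfrak{g})$, observes that the structure equations are exactly the Maurer--Cartan equation $\dd\gamma+[\gamma,\gamma]=0$, and applies Cartan's theorem on the simply-connected $L$ before projecting $G\to G/H$, exactly as in the classical Bonnet theorem. Your detour through the trivial principal bundle $Q=L\times H$ and local primitives is harmless but unnecessary, since the data already live on $L$ and the Cartan integration can be performed there directly.
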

\begin{proof}
 Define the form $\gamma=\alpha+i \beta+(\sigma_1,\sigma_2,\sigma_3)\in \mathfrak{h}\oplus \mathfrak{m}\cong \mathfrak{g}$. Since $\sigma, \alpha,\beta$ satisfy the equations \ref{lagstructureeqns1}-\ref{lagstructureeqns3} we have 
 $\dd \gamma+[\gamma,\gamma]=0$. The statement now follows from Cartan's theorem, just as the classical Bonnet theorem for surfaces in $\R^3$.
\end{proof}
\begin{rmk}
\label{remark: local frames}
 Note that the tautological one form $(\zeta_1,\zeta_2,\zeta_3)$ can also be regarded as an element in $\Gamma(P,\mathrm{End}(\C^3))$. With this identification, a local section $s$ of $L\supset U\to P_{\SO(3)}$ gives a section $\Gamma(U,(T^\vee M)|_L\otimes \C^3)\cong \Omega^1(U,\C^3)$. Then $s^*\eta_i$ vanishes on $TL$ while $s^*\sigma$ vanishes on the normal bundle.
\end{rmk}

\section{An Angle Function for Special Lagrangians}
Since twistor fibres are $J$-holomorphic they can never be contained in a special Lagrangian submanifold. Generically, a special Lagrangian intersects every twistor fibre transversally. However, there is a special class of special Lagrangians which are circle bundles over superminimal surfaces in $S^4$. We review this construction and define an angle function $L\to [0,\frac{\pi}{4}]$ which has value $\frac{\pi}{4}$ if $L$ intersects a twistor fibre non-transversally. We use a gauge transformation, which depends on $\theta$, to use the moving frame setup from the previous section for special Lagrangians in $\CP^3$. We identify special solutions to the resulting structure equations, all of which turn out to be homogeneous.
\label{sec: angle function SL}
\subsection{The Linear Model}
We start with the study of Lagrangian subspaces in a twistor space on the tangent level.
The space of special Lagrangian subspaces of $\C^n$ is identified with the homogeneous space $\SU(n)/{\SO(n)}$.
Twistor nearly Kähler spaces have the property that the holonomy of the nearly Kähler connection reduces to $\mathrm{U}(2)\cong \mathrm{S}(\mathrm{U}(2)\times \mathrm{U}(1))$. The two-form splits into a horizontal and vertical part $\omega=\omega_{\mathcal{H}}+\omega_{\mathcal{V}}$. 
So, in order to understand how frames can be adapted further to a special Lagrangian of a twistor space, we study the linear problem first.\par
Let $(b_1,b_2,b_3)$ denote the standard basis of $\C^3$ with dual basis $(\omega_1,\omega_2,\omega_3)$ and let $\omega_{\mathcal{H}}=\frac{i}{2} (\omega_1\wedge \bar{\omega}_1+\omega_2\wedge \bar{\omega}_2)$ as well as $\omega_{\mathcal{V}}=\frac{i}{2} (\omega_3\wedge \bar{\omega}_3)$. Let $H\cong \mathrm{S}(\mathrm{U}(2)\times \mathrm{U}(1))$ be the stabiliser of $\omega_{\mathcal{V}}$ inside $\SU(3)$. Let also $\psi=\reP+i\imP$ be the complex-valued three form $-i\omega_1\wedge\omega_2\wedge\omega_3$ on $\C^3$.
We have abused notation slightly here, since $\omega,\psi$ are forms on the nearly Kähler manifold but also denote their linear models on $\C^3$.\par
For a complex subspace $W\subset \C^3$ denote by $\mathrm{SLag}(W)$ the set of all special Lagrangian subspaces of $W$. 
 By $\C^2\subset \C^3$ we refer to the subspace spanned by $b_2$ and $b_3$. Note that $\mathrm{SLag}(\C^2)\cong S^2$ and that $\mathrm{U}(1)\subset \SU(2)$ acts from the left on this space. The quotient is an interval and the following lemma gives a description of each representative.
\begin{lma}
\label{representativelemma}
 Under the action of $\mathrm{U}(1)\cong\{\mathrm{diag}(e^{i\varphi},e^{-i\varphi})\}\subset \SU(2)$ any element in $\mathrm{SLag}(\C^2)$ has a unique representative of the form $V_{\theta}=\mathrm{span}(-i e^{-i\theta} b_2- e^{-i\theta} b_3, e^{i\theta} b_2 + i e^{i\theta} b_3)$, for $0\leq \theta \leq \pi/2$.
\end{lma}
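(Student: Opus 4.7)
The plan is to identify $\mathrm{SLag}(\C^2)$ with $\SU(2)/\SO(2) \cong S^2$, so that the left $\U(1)$-action becomes a rotation of the sphere around some axis. The quotient is then a closed interval, which the family $V_\theta$ should parametrise.

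The first step is to verify that each $V_\theta$ is indeed special Lagrangian, which will follow from a direct evaluation of $\omega = \tfrac{i}{2}\sum_j \omega_j\wedge \bar\omega_j$ and $\omega_2 \wedge \omega_3$ on the generators $e_1(\theta) := -ie^{-i\theta}b_2 - e^{-i\theta}b_3$ and $e_2(\theta) := e^{i\theta}b_2 + ie^{i\theta}b_3$: the first gives zero and the second $2\in \R_{>0}$.

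The key construction is the $\U(1)$-invariant two-form $\Omega := \tfrac{i}{2}(\omega_2\wedge \bar\omega_2 - \omega_3 \wedge \bar\omega_3)$ (invariance is immediate since each summand is preserved by $(z_2,z_3)\mapsto (e^{i\varphi}z_2, e^{-i\varphi}z_3)$), from which I define $F\colon \mathrm{SLag}(\C^2)\to \R$ by $F(V) := \Omega(\tilde e_1,\tilde e_2)$ for any positively oriented orthonormal basis $(\tilde e_1,\tilde e_2)$ of $V$. Because $\tilde e_1 \wedge \tilde e_2$ is $\SO(2)$-invariant among such bases, $F$ is well-defined, and it is $\U(1)$-invariant by construction. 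A short calculation using the normalised generators $e_j(\theta)/\sqrt{2}$ will yield $F(V_\theta) = \cos 2\theta$, so $\theta \mapsto F(V_\theta)$ is a homeomorphism $[0,\pi/2]\to [-1,1]$; in particular the $V_\theta$ lie in distinct $\U(1)$-orbits, which already handles injectivity in the lemma.

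The remaining task, and the main obstacle, is to show that $F$ separates $\U(1)$-orbits on all of $\mathrm{SLag}(\C^2)$. For this I would use the Plücker embedding: the bivector $\tilde e_1 \wedge \tilde e_2 \in \Lambda^2\R^4 = \Lambda^2_+ \oplus \Lambda^2_-$ splits into self-dual and anti-self-dual parts $\omega_V^\pm$; the SLag conditions force $\omega_V^+$ to equal a fixed positive multiple of $\re(\omega_2 \wedge \omega_3)$, while decomposability $(\tilde e_1 \wedge \tilde e_2)\wedge (\tilde e_1 \wedge \tilde e_2) = 0$ forces $|\omega_V^-| = |\omega_V^+|$ to be constant. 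Thus $V \mapsto \omega_V^-$ identifies $\mathrm{SLag}(\C^2)$ with a $2$-sphere in $\Lambda^2_-$. I would then compute the $\U(1)$-action on an orthonormal basis of $\Lambda^2_-$ containing $\Omega$ to see that $\Omega$ spans the fixed axis while the orthogonal plane rotates by the doubled angle $2\varphi$; consequently the $\U(1)$-orbit of $V$ is determined by the $\Omega$-component of $\omega_V^-$, that is by $F(V)$. The sign-sensitive $\Lambda^2_\pm$ linear algebra is the only delicate point, but it is completely explicit.
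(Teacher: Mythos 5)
Your argument is correct, but it follows a genuinely different route from the paper. The paper never leaves the group-theoretic picture: it writes $\mathrm{SLag}(\C^2)=\SU(2)/\SO(2)$, lifts the left $\U(1)$-action to the two-sided action of $K=\U(1)\times\SO(2)$ on $\SU(2)\cong S^3$, observes that $K$ is a maximal torus of $\SO(4)$, and conjugates to the standard torus on $S^3$, whose orbit space is parametrised by $(\cos\theta,0,\sin\theta,0)$, $0\le\theta\le\pi/2$; the normal forms $V_\theta$ are then read off. You instead work directly on the quotient sphere, realising $\mathrm{SLag}(\C^2)$ inside $\Lambda^2_-$ via the self-dual/anti-self-dual splitting: the special Lagrangian conditions pin down $\xi_V^+=\tfrac12\re(\omega_2\wedge\omega_3)$, decomposability fixes $|\xi_V^-|$, and the $\U(1)$-action is the rotation about the axis spanned by $\Omega=\tfrac{i}{2}(\omega_2\wedge\bar\omega_2-\omega_3\wedge\bar\omega_3)$ by angle $2\varphi$ (since $\omega_2\wedge\bar\omega_3\mapsto e^{2i\varphi}\omega_2\wedge\bar\omega_3$), so that the invariant $F(V)=\Omega(\tilde e_1,\tilde e_2)$ separates orbits and $F(V_\theta)=\cos 2\theta$ covers $[-1,1]$ exactly once. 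I checked the deferred computations ($\omega(e_1,e_2)=0$, $\omega_2\wedge\omega_3(e_1,e_2)=2$, $F(V_\theta)=\cos2\theta$, and the weight-$2$ rotation on $\Omega^\perp\subset\Lambda^2_-$) and they work; the only point to keep explicit is that $F$ uses the calibrated orientation of $V$, which is canonical, so $F$ is well defined. What each approach buys: the paper's proof is shorter and purely structural, with no invariant to compute; yours is more computational but produces an explicit orbit-separating function, which is essentially the restriction of $\omega_{\mathcal V}$ used later in the paper (in the proof of \cref{prop: representatives Lag subspaces}, where uniqueness follows from $\omega_{\mathcal V}|_{W_\theta}$ having norm $\tfrac12|\cos 2\theta|$), so your route makes that later uniqueness argument appear already at the level of this lemma and ties the parameter $\theta$ directly to the geometric quantity it measures.
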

\begin{proof}
 Special Lagrangian planes in $\C^2$ are parametrised by $\SU(2)/{\SO(2)}$. Thus, we have to find a unique representative of the action $(A,B) X=A X B^{-1}$ of $K=\mathrm{U}(1)\times \SO(2)$ on $\SU(2)$, which is the action of a maximal torus in $\SO(4)$ acting on $S^3$. The standard torus $\mathrm{U}(1)\times \mathrm{U}(1)\subset \mathrm{U}(2)\subset \SO(4)$ acting on $S^3$ admits unique representatives of the form $(\cos(\theta),0,\sin(\theta),0)$ for $0\leq \theta \leq \pi/2$. The statement follows by conjugating the action of $K$ to the standard torus action.
\end{proof}
For any subspace $W\subset \C^3$ denote by $K_W$ the kernel of the projection onto $\mathrm{span}(b_3)$ and by $n_W$ its dimension. Let 
\begin{align}
\label{eq: transformation matrix theta}
T_{\theta}=\left(
\begin{array}{ccc}
 1 & 0 & 0 \\
 0 & -\frac{i e^{-i \theta }}{\sqrt{2}} & \frac{e^{i \theta }}{\sqrt{2}} \\
 0 & -\frac{e^{-i \theta }}{\sqrt{2}} & \frac{i e^{i \theta }}{\sqrt{2}} \\
\end{array}
\right)
\end{align}
and $W_{\theta}$ be the image of $T_{\theta}$ when applied to the standard $\R^3$ in $\C^3$, i.e.
$W_{\theta}=\mathrm{span}(b_1,-i e^{-i\theta} b_2- e^{-i\theta} b_3, e^{i\theta} b_2 + i e^{i\theta} b_3)$.
\begin{prop}
\label{prop: representatives Lag subspaces}
 Any special Lagrangian subspace $W\subset \C^3$ admits a unique representative $W_{\theta}$ for $0\leq \theta \leq \pi/4$, under the action of $H$. Furthermore, $n_W=2$ if and only if $\theta=\pi/4$.
\end{prop}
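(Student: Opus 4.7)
The plan is to reduce the $H$-orbit question on $\mathrm{SLag}(\C^3)$ to the $\U(1)$-orbit classification of \cref{representativelemma}, using the invariant $n_W$.

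First I would show $n_W\in\{1,2\}$. The projection $W\to\mathrm{span}(b_3)$ has image of real dimension at most $2$, giving $\dim_\R K_W\ge 1$. Conversely $K_W\subset W$ is isotropic for $\omega|_{\mathrm{span}(b_1,b_2)}$, which is non-degenerate on this $4$-dimensional real space, so $\dim_\R K_W\le 2$.

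Next I would use the $\U(2)$-factor of $H$, acting on $\mathrm{span}(b_1,b_2)$, to rotate any chosen unit vector of $K_W$ onto $b_1$. After this normalisation $b_1\in W$, and the Lagrangian condition $\omega(b_1,\cdot)|_W=0$ forces the first complex coordinate of every vector of $W$ to be real. Hence $W\subset\R b_1\oplus\mathrm{span}(b_2,b_3)$ and $W=\R b_1\oplus V$ with $V:=W\cap\mathrm{span}(b_2,b_3)$ a real $2$-plane. A short calculation using $\psi=-i\,\omega_1\wedge\omega_2\wedge\omega_3$ shows that $V$ is itself a special Lagrangian in $\C^2=\mathrm{span}(b_2,b_3)$. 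The remaining stabiliser of $b_1$ inside $H$ is exactly the $\U(1)=\{\mathrm{diag}(1,e^{i\varphi},e^{-i\varphi})\}$ of \cref{representativelemma}, so the Lemma yields a unique $\theta\in[0,\pi/2]$ with $V=V_\theta$, i.e.\ $W=W_\theta$.

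To fold the interval to $[0,\pi/4]$, I would exhibit an element of $H$ realising the involution $\theta\leftrightarrow\pi/2-\theta$. The natural candidate is $h=\mathrm{diag}(-1,1,-1)$: it has unit determinant and preserves $\mathrm{span}(b_3)$, so it lies in $H$, and a direct computation shows that $h$ sends each basis vector of $V_\theta$ to the negative of a basis vector of $V_{\pi/2-\theta}$, hence $h\cdot W_\theta=W_{\pi/2-\theta}$. Finally, the dichotomy $n_W=2\Leftrightarrow \theta=\pi/4$ follows by computing directly from the formula for $T_\theta$ that the real $2\times 2$ matrix describing the projection $W_\theta\to\mathrm{span}(b_3)$ (restricted to the $(y_2,y_3)$-variables) has determinant $-\cos(2\theta)$. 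Uniqueness on $[0,\pi/4]$ is then secured by the $H$-invariant $\|\omega_\mathcal{V}|_{W_\theta}\|^2=4\cos^2(2\theta)$, which is strictly monotone in $\theta\in[0,\pi/4]$.

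The main obstacle is identifying the correct folding element: the Lemma only gives $\U(1)$-representatives, but $H$ is larger, and the extra $\mathbb{Z}_2$-involution needed to bring $[0,\pi/2]$ down to $[0,\pi/4]$ must be spotted by hand — once $h=\mathrm{diag}(-1,1,-1)$ is guessed the check is short, but without it the proof stalls at the Lemma's range.
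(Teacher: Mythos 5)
Your proposal is correct and follows essentially the same route as the paper: normalise $K_W$ with the $\U(2)$-factor of $H$ so that $b_1\in W$, reduce to \cref{representativelemma} for $V=W\cap\mathrm{span}(b_2,b_3)$, fold $[0,\pi/2]$ to $[0,\pi/4]$ by an explicit element of $H$ (the paper's choice $\mathrm{diag}(-1,-1,1)$ differs from your $\mathrm{diag}(-1,1,-1)$ only by the stabiliser element $\mathrm{diag}(1,-1,-1)$ of $W_\theta$), and obtain uniqueness and the $n_W$-dichotomy from the $H$-invariant $|\cos(2\theta)|$. The only blemish is a harmless normalisation constant: with the paper's conventions $\|\omega_{\mathcal{V}}|_{W_\theta}\|=\tfrac{1}{2}|\cos(2\theta)|$ rather than $4\cos^2(2\theta)$, which does not affect the strict monotonicity on $[0,\pi/4]$ that your uniqueness argument uses.
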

\begin{proof}
Since $W$ is Lagrangian, $n_W\geq 1$. If $n_W=2$ then $W$ is represented by the standard $\R^3$ in $\C^3$ and $n_{W_{\theta}}=2$ if and only if $\theta=\pi/4$. So from now on we assume that $n_W=1$.
Consider the map $l \colon\mathrm{Gr}_2(\C^2)\to \mathrm{Gr}_3(\C^3), \quad V\mapsto \mathrm{span}(b_1,V)$. Note that $W_{\theta}=l(V_{\theta})$ and that $l$ descends to a map $\hat{l}\colon \mathrm{Lag}(\C^2)/{\mathrm{U}(1)}\to \mathrm{Lag}(\C^3)/H.$ To show surjectivity observe that for $W\in \mathrm{Lag}(\C^3)$ we have $K_W\subset \mathrm{span}(b_1,b_2)$. So by acting with $H$ we can achieve that $K_W$ is spanned by $b_1$. Furthermore, observe that  
\[\begin{pmatrix}-1 & 0&0 \\ 0&-1&0 \\ 0& 0 &1 \end{pmatrix}T_{\theta} \begin{pmatrix}-1 & 0&0 \\ 0&0&1 \\ 0& 1 &0 \end{pmatrix}=T_{\pi/2-\theta} \] 
which means that $W_{\theta}=W_{\theta'}$ for $\theta+\theta'=\pi/2$.
We have shown that any element in $\mathrm{Lag}(\C^3)$ is represented by a $W_{\theta}$ for $0\leq \theta\leq \pi/4$. The uniqueness follows from the observation that $\omega_{\mathcal{V}}$ has norm $\frac{1}{2}|\mathrm{cos}(2\theta)|$ when restricted to the vector space $W_{\theta}$.
\end{proof}
If $w_1,w_2,w_3$ is a basis of $W$ such that $w_1\in K_W$ and $w_2,w_2\in K_W^\perp$ then $\theta$ can be computed by the formula
\begin{align}
\label{eq: formula for theta}
 \frac{1}{2\|w_1\|}|\mathrm{cos}(2\theta)|\psi^{-}(w_1,w_2,w_3)=\omega_{\mathcal{V}}(w_2,w_3).
\end{align}
\Cref{prop: representatives Lag subspaces} gives a geometric interpretation of the boundary value $\pi/4$. In \cref{prop: CR submanifolds} we relate the case $\theta=0$ to CR-manifolds in the Kähler $\cp$, so we study this case on the linear level first.
Motivated by the existence of the two almost complex structures $J_1$ and $J_2$ on the twistor space, consider the almost complex structure \begin{align}
\label{def: J'}
J'\colon (b_1,b_2,b_3)\mapsto (i b_1,i b_2, -i b_3)                                                                                                                                                                                                                                                                                                                                                                                                                        
\end{align}
on $\C^3$. 
Any special Lagrangian subspace $W$ in $\C^3$ splits as $K_W\oplus K_W^\perp$.
\begin{lma}
\label{lma: CR subspaces}
 If $\theta\neq \frac{\pi}{4}$ then $J(K_W)$ is orthogonal to $W$. The subspace $K_W^\perp$ is invariant under $J'$ if and only if $\theta=0$.
\end{lma}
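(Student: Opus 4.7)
My plan is to reduce the statement to the canonical representatives given by Proposition~\ref{prop: representatives Lag subspaces} and then carry out direct linear-algebraic computations on $W_\theta$. The reduction is justified by two observations: first, $H=\mathrm{S}(\mathrm{U}(2)\times\mathrm{U}(1))$ preserves the projection $\mathbb{C}^3\to\mathrm{span}(b_3)$, so $K_{hW}=hK_W$ and orthogonality is preserved since $H\subset\SU(3)$; second, $J'=\mathrm{diag}(i,i,-i)$ acts as a scalar on each of the summands $\mathrm{span}(b_1,b_2)$ and $\mathrm{span}(b_3)$, hence commutes with every element of $H$. Thus both properties in the lemma are $H$-invariant and we may assume $W=W_\theta$.

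For the first claim, I would write a general element of $W_\theta$ as $T_\theta(x_1,x_2,x_3)^\top$ with $x_i\in\mathbb{R}$ and observe that its $b_3$-component equals $\tfrac{1}{\sqrt{2}}(-e^{-i\theta}x_2+ie^{i\theta}x_3)$. Setting this to zero with $x_2,x_3\in\mathbb{R}$ forces $x_2=x_3=0$ unless $ie^{2i\theta}\in\mathbb{R}$, i.e.\ unless $\theta=\pi/4$. Hence for $\theta\neq\pi/4$ one has $K_{W_\theta}=\mathbb{R} b_1$, so $J(K_{W_\theta})=\mathbb{R}(ib_1)\subset\mathrm{span}_{\mathbb{C}}(b_1)$. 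Orthogonality to $W_\theta$ is then immediate: $ib_1$ is orthogonal to $b_1$ in the real inner product $\mathrm{Re}\langle\cdot,\cdot\rangle$ and orthogonal to the other basis vectors $w_2=-\tfrac{e^{-i\theta}}{\sqrt{2}}(ib_2+b_3)$ and $w_3=\tfrac{e^{i\theta}}{\sqrt{2}}(b_2+ib_3)$ of $W_\theta$ because the latter lie in the complex subspace $\mathrm{span}_{\mathbb{C}}(b_2,b_3)$.

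For the second claim, $K_{W_\theta}^{\perp}$ is the real span of $w_2,w_3$, and a direct application of $J'$ yields $J'(w_2)=e^{-2i\theta}w_3$ and $J'(w_3)=-e^{2i\theta}w_2$. Since $w_2,w_3$ remain $\mathbb{C}$-linearly independent (they span the full Lagrangian $K_{W_\theta}^{\perp}\subset\mathbb{C}^2=\mathrm{span}_{\mathbb{C}}(b_2,b_3)$), the decomposition $J'(w_2)=aw_2+bw_3$ is unique over $\mathbb{C}$, so real coefficients force $e^{-2i\theta}\in\mathbb{R}$, i.e.\ $\theta=0$ on the interval $[0,\pi/4)$. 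Conversely at $\theta=0$ the formulas simplify to $J'(w_2)=w_3$ and $J'(w_3)=-w_2$, confirming that $J'$ then restricts to a genuine complex structure on $K_{W_\theta}^{\perp}$. The computations are essentially routine; the only subtlety worth flagging is the use of $\mathbb{C}$-linear independence of $w_2,w_3$ to extract real coefficients uniquely.
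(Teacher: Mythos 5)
Your argument is essentially the paper's: reduce to the representatives $W_\theta$ of \cref{prop: representatives Lag subspaces} using that $H$ preserves the splitting $\mathrm{span}_{\C}(b_1,b_2)\oplus\mathrm{span}(b_3)$ and commutes with $J'$, then compute directly on the basis $b_1,w_2,w_3$; your computation ($J'(w_2)=e^{-2i\theta}w_3$, $J'(w_3)=-e^{2i\theta}w_2$, with $\C$-linear independence of $w_2,w_3$ forcing $e^{-2i\theta}\in\R$) is a more explicit version of the paper's one-line observation about $V_\theta$ and the endomorphism $(b_2,b_3)\mapsto(ib_2,-ib_3)$.

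The one point to repair is the case $\theta=\frac{\pi}{4}$ in the second claim. You restrict the "only if" conclusion to the interval $[0,\frac{\pi}{4})$, but the statement (as the paper reads and proves it) asserts non-invariance of $K_W^\perp$ for \emph{all} $\theta\neq 0$, including $\theta=\frac{\pi}{4}$; there your identification $K_{W_\theta}^\perp=\mathrm{span}_{\R}(w_2,w_3)$ fails, since $n_W=2$ means $K_W$ is two-dimensional (your own computation shows the $b_3$-component vanishes on $x_2=-x_3$ when $ie^{2i\theta}\in\R$). The fix is the paper's: for $\theta=\frac{\pi}{4}$ the complement $K_W^\perp$ inside $W$ is one-dimensional over $\R$, and a real line can never be invariant under $J'$, which has no real eigenvalues. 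With that one sentence added, your proof is complete.
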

\begin{proof}
 The endomorphism $J'$ commutes with the action of $H$ on $\C^3$, so it suffices to prove the statement for $W_\theta$. If $\theta=\frac{\pi}{4}$ then $K_W^\perp$ is one-dimensional so it cannot be invariant under $J'$. Otherwise, $K_W$ is spanned by $b_1$ and $K_W^\perp$ equals $V_\theta$. Clearly $Jb_1=ib_1$ is orthogonal to $W$. The statement follows by observing that $V_\theta$ is invariant under the endomorphism $(b_2,b_3)\mapsto (ib_2,-ib_3)$ if and only if $\theta=0$.
\end{proof}
The following lemma can be proven by standard computations in $\SU(3)$ and is important for adapting frames on special Lagrangians in twistor spaces.
\begin{lma}
\label{stabiliser}
 Let $H_{\theta}=T_{\theta}^{-1}H T_{\theta} \cap \SO(3)$ be the stabiliser group of $W_{\theta}$ in $H$ with Lie algebra $\mathfrak{h}_{\theta}$. Then
\begin{align*}\mathfrak{h}_{\theta}=\begin{cases} \R \cdot \begin{pmatrix} 0 & 1 & -1\\ -1 & 0 & 0 \\ 1 & 0 & 0 \end{pmatrix} & \theta=\pi/4 \\
\{0\} \oplus \mathfrak{so}(2) & \theta=0\\
\{0\} & \text{otherwise}
\end{cases}
\qquad
T_{\theta} \mathfrak{h}_{\theta} T_{\theta}^{-1}=\begin{cases} \R \cdot  \begin{pmatrix} 0 & -1+i & 0\\ 1+i & 0 & 0 \\ 0 & 0 & 0 \end{pmatrix} & \theta=\pi/4 \\
\{0\} \oplus \mathfrak{s(u(1)\oplus u(1))} & \theta=0 \\
\{0\} & \text{otherwise}
\end{cases}
\end{align*}
Then $H_{\theta}$ is generated by $\exp(\mathfrak{h}_{\theta})$ and the element $\mathrm{diag}(1,-1,-1)$. In particular, $H_{\theta}$ is isomorphic to $\mathrm{O}(2)$ if $\theta=\pi/4$, to $\SO(2)$ if $\theta=0$ and to $\mathbb{Z}_2$ otherwise.
\end{lma}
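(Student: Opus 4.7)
The plan is to reduce the stabiliser condition to a linear algebra problem and then identify the full group by finding its components. Since $\mathfrak{h}=\mathfrak{s}(\mathfrak{u}(2)\oplus\mathfrak{u}(1))$ consists of block-diagonal matrices in $\mathfrak{su}(3)$ with a $2\times 2$ and a $1\times 1$ block, the condition $T_\theta X T_\theta^{-1}\in\mathfrak{h}$ for $X\in\mathfrak{so}(3)$ amounts to the vanishing of the off-block entries $(1,3),(2,3),(3,1),(3,2)$.

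Parametrising $X\in\mathfrak{so}(3)$ by the three real entries $a,b,c$ above the diagonal and using the explicit form of $T_\theta$, a direct computation shows that the $(2,3)$ entry of $T_\theta X T_\theta^{-1}$ is proportional to $c\sin(2\theta)$ while the $(1,3)$ entry is proportional to $ae^{i\theta}+ibe^{-i\theta}$. Separating real and imaginary parts of the latter gives a $2\times 2$ linear system in $(a,b)$ with determinant $\cos(2\theta)$. For $\theta\in(0,\pi/4)$ both $\sin(2\theta)$ and $\cos(2\theta)$ are nonzero, forcing $a=b=c=0$ and hence $\mathfrak{h}_\theta=0$. For $\theta=0$ the parameter $c$ is free while $a=b=0$, yielding $\{0\}\oplus\mathfrak{so}(2)$. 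For $\theta=\pi/4$ we get $c=0$ and $a+b=0$, producing the one-dimensional span of the displayed generator. Substituting back into the formula for $T_\theta X T_\theta^{-1}$ in the latter two cases yields the stated image in $\mathfrak{h}$.

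For the group structure, I would first verify directly that $g_0:=\mathrm{diag}(1,-1,-1)$ lies in $H_\theta$ for every $\theta$: the matrix $T_\theta$ is block-diagonal as $1\oplus U_\theta$ with $U_\theta\in\U(2)$, and the scalar $-1$ commutes with $U_\theta$, so $T_\theta g_0 T_\theta^{-1}=g_0\in H$. For $\theta=0$ the element $g_0$ is rotation by $\pi$ about $e_1$ and therefore lies in the circle $\exp(\mathfrak{h}_0)$, giving $H_0=\SO(2)$. For $\theta=\pi/4$ the rotation axis of $\exp(\mathfrak{h}_{\pi/4})$, read off from the displayed generator, is $(0,1,1)/\sqrt{2}$, not $e_1$, so $g_0\notin\exp(\mathfrak{h}_{\pi/4})$ and $H_{\pi/4}$ has at least two components; since any closed subgroup of $\SO(3)$ containing $\SO(2)$ is contained in its normaliser $\O(2)$, we conclude $H_{\pi/4}\cong\O(2)$.

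The main obstacle is ruling out extra discrete components for $\theta\in(0,\pi/4)$. For this I would exploit that $H$ preserves $\C b_3$, so any $X\in H_\theta$ must satisfy $Xw\in\C w$ with $w:=T_\theta^{-1}b_3=(0,-e^{i\theta},-ie^{-i\theta})/\sqrt{2}$. The first component of $Xw$ must vanish, giving $X_{12}e^{i\theta}+iX_{13}e^{-i\theta}=0$; splitting into real and imaginary parts produces a linear system in $(X_{12},X_{13})$ with determinant $\cos(2\theta)\neq 0$, so $X_{12}=X_{13}=0$. Hence $X=\mathrm{diag}(\epsilon,Y)$ with $\epsilon=\pm 1$ and $Y\in\O(2)$. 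Imposing the remaining eigenvector equation $Yw'=\lambda w'$ for $w':=(-e^{i\theta},-ie^{-i\theta})$ and checking the two cases separately, $Y\in\SO(2)$ reduces to $Y=\pm I$ via $\sin(2\theta)\neq 0$, while $Y$ a reflection requires $\cos(2\theta)=0$, which is excluded. The only surviving possibilities are $X=I$ and $X=g_0$, so $H_\theta=\{I,g_0\}\cong\Z_2$, completing the proof.
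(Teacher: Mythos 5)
The paper does not actually spell out a proof of this lemma; it only records that it ``can be proven by standard computations in $\SU(3)$''. Your argument is a correct way of carrying out those computations, and the key quantities check out: writing $T_\theta=1\oplus U_\theta$ with $U_\theta\in\SU(2)$, the $(2,3)$ entry of $T_\theta X T_\theta^{-1}$ is $ic\sin(2\theta)$, the $(1,3)$ entry is $-(ae^{i\theta}+ib e^{-i\theta})$, and the real/imaginary split of the latter has determinant $\cos(2\theta)$, which gives exactly the three cases for $\mathfrak{h}_\theta$ and, after substitution, the stated images $T_\theta\mathfrak{h}_\theta T_\theta^{-1}$. Your reformulation of the group-level condition as $Xw\in\C w$ with $w=T_\theta^{-1}b_3=(0,-e^{i\theta},-ie^{-i\theta})/\sqrt2$ is also the right move, and the case analysis ($Y\in\SO(2)$ forces $\sin\phi\sin(2\theta)=0$, a reflection forces $\cos(2\theta)=0$) correctly yields $H_\theta=\{I,\mathrm{diag}(1,-1,-1)\}\cong\Z_2$ for $\theta\in(0,\pi/4)$.

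Two points in the group-structure step need tightening. First, at $\theta=0$ you only argue that the group generated by $\exp(\mathfrak{h}_0)$ and $g_0$ is $\SO(2)$; the lemma also requires $H_0\subseteq\SO(2)$, i.e.\ that there are no further components. This is not automatic, but your own eigenvector argument closes it: since $\cos(2\theta)=1\neq 0$ at $\theta=0$, the first-component equation still forces $X_{12}=X_{13}=0$, so $X=\mathrm{diag}(\epsilon,Y)$, reflections are again excluded, and every rotation $Y$ is admissible because $w'=-(1,i)$ is then an eigenvector of every element of $\SO(2)$; hence $H_0=\SO(2)$ exactly. Second, the statement ``any closed subgroup of $\SO(3)$ containing $\SO(2)$ is contained in its normaliser $\O(2)$'' is false as written ($\SO(3)$ itself is a counterexample). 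What saves the argument is your Lie algebra computation: $\mathfrak{h}_{\pi/4}$ is one-dimensional, so the identity component of $H_{\pi/4}$ is the circle of rotations about $(0,1,1)/\sqrt2$, and any closed subgroup normalises its identity component, hence $H_{\pi/4}$ sits inside that circle's normaliser $\cong\O(2)$; together with $g_0\notin\exp(\mathfrak{h}_{\pi/4})$ and $[\O(2):\SO(2)]=2$ this gives $H_{\pi/4}\cong\O(2)$. With these two repairs the proof is complete.
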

The action of $H$ on $\mathrm{Lag}(\C^3)$ is a smooth cohomogeneity one action. The orbit at $W_{\theta}$ is diffeomorphic to $H/({T_{\theta}H_{\theta} T_{\theta}^{-1}})$
and is singular for $\theta=0,\pi/4$ and of principal type otherwise.
The principal orbits are diffeomorphic to $H/{\langle \mathrm{diag}(1,-1,-1)\rangle}\cong \mathrm{U}(2)/{\langle \mathrm{diag}(1,-1)\rangle}$. The orbit of $W_0$ is diffeomorphic to $H/{(\{1\}\times S(\mathrm{U}(1)\times \mathrm{U}(1)))}\cong \mathrm{U}(2)/{(\{1\}\times \mathrm{U}(1))}\cong S^3$. 
Observe that ${T_{\pi/4}H_{\pi/4} T_{\pi/4}^{-1}}$ is conjugated to the $\mathrm{O}(2)$ subgroup generated by 
\[S(\U(1)\times \U(1)) \text{ and } \begin{pmatrix} 0 & -1 \\ -1&0 \end{pmatrix}.\]
 This subgroup is equal to the preimage of $[([1,0],1)$ of the map \[\mathrm{U}(2)\to (\mathbb{CP}^1\times S^1)/{\mathbb{Z}_2}, \quad A\mapsto [[A (1,0)^T],\mathrm{det}(A)].\] Here $\mathbb{Z}_2$ acts as the antipodal map on both $\mathbb{CP}^1\cong S^2$ and on $S^1$. Hence, the orbit of $W_{\pi/4}$ is diffeomorphic to $(S^2\times S^1)/\mathbb{Z}_2$.\par
 The following lemma summarises these observations.
 \begin{lma}
  The action of $H$ on $\mathrm{Lag}(\C^3)$ is of cohomogeneity one. The principal orbit is diffeomorphic to $\mathrm{U}(2)/{\mathbb{Z}_2}$, two singular orbits occur at $\theta=0$ and $\theta=\frac{\pi}{4}$. The orbit $W_0$ is diffeomorphic to $S^3$ and that of $W_{\pi/4}$ to $(S^2\times S^1)/\mathbb{Z}_2$.
 \end{lma}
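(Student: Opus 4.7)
The plan is to assemble this lemma directly from the ingredients already established in \cref{prop: representatives Lag subspaces} and \cref{stabiliser}, and the preceding paragraph essentially carries out the computation; a clean proof should organize it as follows.

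First, I would invoke \cref{prop: representatives Lag subspaces} to conclude that every $H$-orbit on $\mathrm{Lag}(\C^3)$ meets the one-parameter family $\{W_\theta : \theta \in [0,\pi/4]\}$ in exactly one point. Since $[0,\pi/4]$ is one-dimensional, the action is cohomogeneity one, with orbit space parametrized by $\theta$. The isotropy at $W_\theta$ is $T_\theta H_\theta T_\theta^{-1}$, whose Lie algebra is identified in \cref{stabiliser}: it is trivial for $\theta \in (0,\pi/4)$ and positive-dimensional precisely at $\theta=0$ and $\theta=\pi/4$. This pins down the two singular orbits.

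For the principal orbit, \cref{stabiliser} gives the stabilizer as the order-two group generated by $\mathrm{diag}(1,-1,-1)$. Under the identification $H = \mathrm{S}(\mathrm{U}(2)\times \mathrm{U}(1)) \cong \mathrm{U}(2)$ (project onto the upper $2\times 2$ block; the determinant of that block determines the $\mathrm{U}(1)$ factor), this element corresponds to $-I \in \U(2)$, so the orbit is $\U(2)/\{\pm I\} = \U(2)/\mathbb{Z}_2$.

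For $\theta = 0$, \cref{stabiliser} gives the stabilizer $\{1\} \times \mathrm{S}(\U(1)\times \U(1)) \subset \mathrm{S}(\U(2)\times \U(1))$; the quotient $\U(2)/(\{1\}\times \U(1))$ is the unit sphere in $\C^2$, hence $S^3$. For $\theta=\pi/4$, the issue is to recognize the $\O(2)$ subgroup appearing in \cref{stabiliser} inside $\U(2)$. The subgroup is generated by $\mathrm{S}(\U(1)\times \U(1))$ together with the matrix swapping the two factors (up to a sign), which is exactly the preimage of $[([1{:}0],1)]$ under the map
\[
\U(2) \longrightarrow (\CP^1 \times S^1)/\mathbb{Z}_2, \qquad A \mapsto [[A(1,0)^T], \det A],
\]
where $\mathbb{Z}_2$ acts antipodally on each factor. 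This map is surjective with fibres exactly the cosets of this stabilizer, so the orbit is diffeomorphic to $(\CP^1 \times S^1)/\mathbb{Z}_2 \cong (S^2 \times S^1)/\mathbb{Z}_2$.

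The main obstacle is the bookkeeping at $\theta=\pi/4$: one must verify that the $\O(2)$ subgroup coming out of \cref{stabiliser} really is the stated preimage, which amounts to checking that the generator $\bigl(\begin{smallmatrix}0&-1\\-1&0\end{smallmatrix}\bigr)$ (together with the standard torus) cuts out fibres of the proposed map — a direct but slightly fiddly computation with the determinant and the projective class of the first column.
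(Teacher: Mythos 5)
Your overall route is the same as the paper's: cohomogeneity one from the existence and uniqueness of the representatives $W_\theta$ (\cref{prop: representatives Lag subspaces}), singular orbits detected by the jump in the isotropy algebra from \cref{stabiliser}, the orbit of $W_0$ as $\U(2)/(\{1\}\times\U(1))\cong S^3$, and the orbit of $W_{\pi/4}$ identified via the map $\U(2)\to(\CP^1\times S^1)/\mathbb{Z}_2$, $A\mapsto[[A(1,0)^T],\det A]$ — this is exactly the argument in the paragraph preceding the lemma.

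There is, however, one concrete error in your treatment of the principal orbit. Under the identification $H=\mathrm{S}(\U(2)\times\U(1))\cong\U(2)$ by the upper $2\times2$ block (which is the identification you describe), the principal isotropy generator $\mathrm{diag}(1,-1,-1)$ maps to $\mathrm{diag}(1,-1)$, not to $-I$; the element of $H$ corresponding to $-I\in\U(2)$ is $\mathrm{diag}(-1,-1,1)$, and a short check shows it does \emph{not} preserve $W_\theta$ for $\theta\in(0,\pi/4)$. This matters because the two order-two quotients are not diffeomorphic: writing $\U(2)\cong S^3\times S^1$ via $A\mapsto(A\,\mathrm{diag}(\det A^{-1},1),\det A)$, right multiplication by $\mathrm{diag}(1,-1)$ acts by $(B,z)\mapsto(-B,-z)$, so $\U(2)/\langle\mathrm{diag}(1,-1)\rangle$ is the (trivial) mapping torus of the antipodal map of $S^3$, i.e.\ $S^3\times S^1$, whereas $\U(2)/\{\pm I\}\cong\mathbb{RP}^3\times S^1$. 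So the principal orbit is $\U(2)/\langle\mathrm{diag}(1,-1)\rangle\cong S^3\times S^1$, which is what the lemma's shorthand $\U(2)/\mathbb{Z}_2$ refers to (this is also consistent with the principal orbit being a circle bundle over the singular orbit $S^3$ at $\theta=0$, and such a bundle is necessarily trivial). Correcting this identification fixes the proof; the remaining steps, including the fiddly $\O(2)$ bookkeeping at $\theta=\pi/4$, are as in the paper and are fine.
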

\subsection{Adapting Frames}
\label{sec: adapting frames}
We now assume that $M$ is a nearly Kähler twistor space over a Riemannian four manifold $N$. In other words, $M$ is either $\CP^3$ or the flag manifold $\mathbb{F}$. Lagrangian submanifolds of the latter have been studied in \cite{storm2020lagrangian} so our interest is in $\CP^3$ in this chapter. Before using the explicit description of $\CP^3$ we give a few general statements that could be useful for generalisations to other spaces, such as non-nearly Kähler twistor spaces.\par 
Given a special Lagrangian submanifold $L \subset M$, we clearly have $T_xL \in \mathrm{Lag}(T_xM)$ for $x \in L$. Since the frame bundle reduces to $H$ there is a map $\mathrm{Lag}(TM|_L) \to \mathrm{Lag}(\C^3)/H$. Hence, $\theta$ can be understood as a map from $L$ to the interval $[0,\frac{\pi}{4}]$ and $T_{\theta}$ from $L$ to $\SU(3)$. 
We now apply our knowledge of the action of $H$ on $\mathrm{Lag}(\C^3)$ to obtain a further frame reduction for special Lagrangian submanifolds in nearly Kähler twistor spaces.
In that case, the holonomy of the nearly Kähler connection on $M$ reduces to $H$, so $P_{\SU(3)}$ reduces to an $H$-bundle and we can assume $\phi_{13}=\phi_{23}=\phi_{31}=\phi_{32}=0$. This means that there are two different reductions of $P|_L$:
The first is to an $H$-bundle $P_H=\{p\colon \C^3\to TM, p \in P_{\SU(3)}|_L \mid p(b_3)\in \mathcal{V}\}$, simply because $P_{\SU(3)}$ itself reduces to an $H$ bundle. The second reduction is to an $\SO(3)$-bundle $P_{\SO(3)}=\{p\colon \C^3\to TM, p \in  P_{\SU(3)}|_L \mid  p(\R^3)=TL \}$ or equivalently by imposing $\eta_i=0$ as in \cref{sec: structure eqns in nk manifolds}.
\par
If $TL\cap \mathcal{V}$ is a rank one bundle, or equivalently $\theta\equiv \frac{\pi}{4}$, then the intersection $P_{\SO(3)}\cap P_H$ is a $H\cap \SO(3)$ bundle. We will derive its structure equations in \cref{subsection: pi4-Lagrangian}.
If $\theta$ avoids the value $\frac{\pi}{4}$ then the intersection $TL\cap \mathcal{V}$ is trivial and $P_{\SO(3)}\cap P_H=\emptyset$ which
precludes the existence of a distinguished frame. However, by \cref{stabiliser} we can apply a gauge transformation to guarantee a non-empty intersection. 
\par For $x\in L$ there is a frame in $P_H$ which maps $W_{\theta}$ to $TL$. Such a frame is unique up to the action of the stabiliser of $W_{\theta}$ in $H$, which is computed in \cref{stabiliser}. This means that 
\begin{align} 
\label{eqn: reduced bundle SL}
Q=P_H T_{\theta}\cap P_{\SO(3)}\neq \emptyset. \end{align}
This is a principal bundle over $L$ with structure group given as in \cref{stabiliser} if $\theta$ is either equal to $0$ or $\frac{\pi}{4}$ everywhere or if $\theta$ avoids these values altogether. In the latter case, the structure group is discrete. 
  We first describe all special Lagrangians where $\theta$ is constant and equal to one of the boundary values everywhere. If $\theta\equiv \frac{\pi}{4}$ then $L$ intersects every twistor fibre in a circle and maps to a surface in $N$.
\subsection{Lagrangians with $\theta\equiv \frac{\pi}{4}$}
\label{subsection: pi4-Lagrangian}
There is a general construction for Lagrangian submanifolds in the twistor space $Z$ of an arbitrary Riemannian four-manifold $N$ due to Storm \cite{storm2020note} and Konstantinov \cite{konstantinov2017higher}. 
To make sense of how a Lagrangian submanifold in $Z$ is defined, recall that $Z$ carries two almost complex structures $J_1, J_2$ and metrics $g_{\lambda}$ for $\lambda \in \R_{\geq 0}$.
For a surface $X\subset N$ define the circle bundle $L_X\subset Z(N)$ with fibre over $x\in X$ equal to $\{J \in Z_x(N)\mid J(T_xX)=\nu_x\}$. Geometrically, the fibre of $L_X$ at $x\in X$ is the equator in each twistor fibre, which is diffeomorphic to $S^2$, relative to the twistor lift of $X$ at $x$. It turns out that this construction gives a lot of examples of Lagrangians in twistor spaces.
\begin{prop}\cite{storm2020note}
\label{prop: storm Lagrangian}
 The submanifold $L_X$ is Lagrangian in $Z$ for both $J_1$ and $J_2$ and every $g_{\lambda}$ if $X$ is superminimal. Conversely, if $L_X$ is Lagrangian for any $J_a$ and $g_{\lambda}$, then $X$ is superminimal.
\end{prop}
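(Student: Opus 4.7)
The plan is to work in a local oriented orthonormal frame $(e_1,e_2)$ for $TX$ and $(e_3,e_4)$ for $\nu X$ along $X\subset N$, so that $\Lambda^2_-TN$ acquires the standard basis $\omega^-_1=e^1\wedge e^2-e^3\wedge e^4$, $\omega^-_2=e^1\wedge e^3-e^4\wedge e^2$, $\omega^-_3=e^1\wedge e^4-e^2\wedge e^3$, and the twistor $S^2$ is the unit sphere in $\Lambda^2_-TN$. The defining condition $J(T_xX)=\nu_xX$ singles out the equator $\{a\omega^-_2+b\omega^-_3:a^2+b^2=1\}$, so $L_X\to X$ is indeed a circle bundle. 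To decide whether $L_X$ is Lagrangian I would split $TL_X$ into its horizontal part (horizontal lifts of $TX$ via the Levi--Civita connection induced on $\Lambda^2_-TN$) and its one-dimensional vertical part (tangent to the equatorial circle in the fibre), and test the symplectic form $\omega_{g_\lambda}=\omega_\mathcal{H}+\lambda\omega_\mathcal{V}$ on these three types of pairs. The same computation will apply for both $J_1$ and $J_2$, since the two almost complex structures on $Z$ differ only by a sign on the vertical part, leaving the Lagrangian condition unchanged.

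The vertical--vertical pairing vanishes for trivial dimensional reasons, since the vertical tangent space of $L_X$ has rank one. The horizontal--horizontal pairing is computed from the identity $\omega_\mathcal{H}(\tilde v,\tilde w)\big|_J=g(Jv,w)$ for horizontal lifts $\tilde v,\tilde w$ of $v,w\in T_xX$; by the defining property $J(T_xX)\subset \nu_xX$ this is zero at every point of $L_X$ and for every $\lambda$, independently of $X$. The only non-automatic pairing is the mixed one. Here a vertical vector $\xi$ at $J\in L_X\big|_x$ is represented by an element of the tangent space to the equator inside $\Lambda^2_-T_xN$, while the horizontal lift $\tilde v$ is characterised by the parallel transport of $J$ along $v$; its failure to remain tangent to $L_X$ is controlled by the covariant derivative of the defining bundle map and hence, unravelling the Gauss formula on $X$, by the second fundamental form $\sff_X$. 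A direct moving-frame computation should express $\omega_\mathcal{H}(\tilde v,\xi)$ (and the mixed part of $\omega_\mathcal{V}$, which contributes nothing here since $\xi$ is tangent to the equator and the horizontal lift is horizontal) as a linear pairing of $\sff_X(v,\cdot)$ with the element of $\Lambda^2_-T_xN$ corresponding to $\xi$.

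The main obstacle is to match the resulting tensor expression with the intrinsic characterisation of superminimality. Superminimal surfaces are precisely those whose shape operators $A_{e_3}$ and $A_{e_4}$ satisfy $A_{e_3}=J_0 A_{e_4}$ for the complex structure $J_0$ on $T_xX\oplus \nu_xX$ coming from the chosen lift, equivalently those whose second fundamental form has vanishing $(0,2)$-component, equivalently those whose twistor lift is horizontal. The mixed pairing above, as $\xi$ varies over the equator and $v$ over $T_xX$, should assemble exactly into this $(0,2)$-component of $\sff_X$, which gives both directions at once: vanishing of the mixed pairing for all $(v,\xi)$ and all $\lambda>0$ is equivalent to $X$ being superminimal. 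Once this identification is in hand, both implications follow, and the independence of $J_1$ versus $J_2$ and of $\lambda$ is automatic since the $(0,2)$-part of $\sff_X$ is a purely intrinsic object of $X\subset N$.
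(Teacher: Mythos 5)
Your overall strategy (moving frames in $\Lambda^2_-TN$, identifying the fibre of $L_X$ with the equator, reducing to a pointwise pairing that assembles into the superminimality condition) is the right one, and for what it is worth the paper itself does not reprove this statement but cites Storm and Konstantinov. However, as written your argument has a genuine structural error in where the non-trivial condition lives. You start by splitting $TL_X$ as (horizontal lifts of $TX$) $\oplus$ (vertical circle direction). That splitting is exactly what fails for a general surface: a curve in $L_X$ projecting to a curve in $X$ has vertical derivative $\nabla_v\xi$, and since $\xi$ is constrained to lie in the rank-two subbundle $E=\mathrm{span}(\omega^-_2,\omega^-_3)$, this vertical part carries a \emph{forced} component along $\omega^-_1$ (the twistor-lift direction), equal to $-\langle\xi,\nabla_v s_1\rangle$ where $s_1$ is the twistor lift of $X$. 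So the tangent vectors of $L_X$ lying over $TX$ are $\tilde v + W(v)$ with $W(v)$ vertical, and $TL_X=(TL_X\cap\mathcal H)\oplus(TL_X\cap\mathcal V)$ holds precisely when these forced components vanish, i.e.\ precisely in the superminimal case; assuming it at the outset begs the question and destroys the converse direction.

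Concretely, with your splitting all three pairings vanish for \emph{every} surface $X$: horizontal--horizontal by the defining property $J(T_xX)=\nu_x$, vertical--vertical by your rank-one remark, and horizontal--vertical automatically, because both $J_1,J_2$ preserve $\mathcal H\oplus\mathcal V$ and $g_\lambda$ makes them orthogonal, so $\omega_{\mathcal H}$ is semibasic and $\omega(\mathcal H,\mathcal V)=0$ identically. Thus the pairing $\omega_{\mathcal H}(\tilde v,\xi)$ you propose to compute is zero regardless of $\sff_X$, and your argument would ``prove'' that $L_X$ is always Lagrangian, contradicting the converse half of the proposition. The actual content sits in the pairing you dismiss: it is $\omega_{\mathcal V}$ evaluated on the equator direction against the forced $\omega^-_1$-component of $W(v)$, i.e.\ $\langle\nabla_v s_1,\xi\rangle$ up to sign. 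Requiring this to vanish for all $v\in T_xX$ and all unit $\xi\in E_x$ says $\nabla s_1=0$ along $X$, i.e.\ the twistor lift is horizontal, which is superminimality; conversely, superminimality kills these terms and then $L_X$ is Lagrangian for both $J_a$ and all $g_\lambda$, since only the vertical part of $\omega$ was involved and it changes by a sign and a scale. So the proof can be repaired, but the tangent-space decomposition and the identification of the relevant pairing must be corrected first; also note that the vertical--vertical pairing cannot be discarded ``for dimensional reasons'', since the vertical projections of $TL_X$ are generically two-dimensional even though $TL_X\cap\mathcal V$ has rank one.
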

Assume $L$ is Lagrangian with $\theta\equiv \frac{\pi}{4}$ so $TL\cap \mathcal{H}$ and $TL\cap \mathcal{V}$ are a rank two and a rank one bundle and 
$TL=TL\cap \mathcal{H} \oplus TL\cap \mathcal{V}.$
So $L$ is also Lagrangian for $J_1$ and $L$ arises via the construction above.
In this case the intersection $P_{\mathrm{O}(2)}=P_H\cap P_{\SO(3)}$ is an $S(\mathrm{O}(2)\times O(1))$ bundle which is defined by imposing $\eta_i=0$ for $i=1,\dots,3$ on $P_H$. 
Since $\beta_{32}=\beta_{23}=\beta_{31}=\beta_{13}=0$ the equation $\beta\wedge \sigma=0$ implies that $\beta_{33}$ lies in the span of $\sigma_3$ and $\beta_{11},\beta_{22}$ lie in the span of $\sigma_1$ and $\sigma_2$. Since $\mathrm{Tr}(\phi)=0$ this implies that $\beta_{33}=0=\beta_{11}+\beta_{22}$, i.e. $\phi$ takes values in $\mathfrak{su}(2)$ when restricted to $P_{\mathrm{O}(2)}$. 
\par
We can view $(\sigma_1,\sigma_2,\sigma_3,\eta_1,\eta_2,\eta_3)$ locally as an orthonormal co-frame on $TM|_L$, see \cref{remark: local frames}. The forms $\sigma_i$ vanish on the normal bundle while $\eta_i$ vanish on $TL$. The form $\sigma_3$ is dual to the unit vector field tangent along the fibres of $L\to X$. Since $\beta_{33}=0$ this means that the fibres of $L\to X$ are in fact geodesics.
Since twistor fibres are totally geodesic $\mathbb{CP}^1\subset M$ these geodesics are great circles in the twistor fibres. 
\par 
Since $\beta_{3i}=0$ for $i=1,2,3$ this implies $h_{3ij}=0$ so the fundamental cubic is of the form 
\[a (x_1^3 - 3 x_1 x_2^2) + b (x_2^3 - 3 x_2 x_1^2).\]
We have therefore shown.
\begin{prop}
 The fundamental cubic of $L_X$ in a nearly Kähler twistor space either vanishes or has stabiliser $S_3$.
\end{prop}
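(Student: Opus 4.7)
The plan is to start from the explicit form of the fundamental cubic already obtained in the discussion preceding the proposition and then reduce the question to a standard calculation on harmonic cubic polynomials in two variables. Recall that since $\beta_{3i}=0$ for all $i$ and $h$ is fully symmetric and traceless, the only free components are $h_{111},h_{112},h_{122},h_{222}$ with $h_{111}=-h_{122}$ and $h_{222}=-h_{112}$. Writing $a:=h_{111}$ and $b:=h_{222}$, the cubic takes the form displayed in the text,
\begin{equation*}
C=a\,(x_1^3-3x_1x_2^2)+b\,(x_2^3-3x_2x_1^2).
\end{equation*}

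The next step is to observe that, setting $z=x_1+ix_2$, one has the identity
\begin{equation*}
C=\operatorname{Re}\!\bigl((a+ib)\,z^{3}\bigr),
\end{equation*}
so $C$ is, up to a real scalar, the real part of $e^{i\alpha}z^{3}$ for some $\alpha$. Now, rotating in the $(x_1,x_2)$-plane by the angle $-\alpha/3$ is an element of $\SO(3)$ preserving the $x_3$-direction (and therefore preserving the condition $h_{3ij}=0$), so after such a gauge change we may assume $b=0$, i.e.
\begin{equation*}
C=a\,(x_1^3-3x_1x_2^2)=a\,\operatorname{Re}(z^{3}).
\end{equation*}
This is the standard harmonic cubic whose stabiliser is well known; the two cases of the proposition correspond to $a=0$ (in which case $C$ vanishes) and $a\neq 0$.

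It remains to identify the stabiliser of $\operatorname{Re}(z^{3})$ in $\SO(3)$. Direct verification shows it contains the three-fold rotation $z\mapsto e^{2\pi i/3}z$ fixing the $x_3$-axis, and the order-two element $(x_1,x_2,x_3)\mapsto(x_1,-x_2,-x_3)\in\SO(3)$, and these generate a dihedral group $D_3\cong S_3$. One then argues the stabiliser can be no larger: any continuous one-parameter subgroup in the stabiliser would force $C$ to be invariant under an $\SO(2)$-action, contradicting the fact that its nodal set in $\R^{3}$ consists of three specific planes through the $x_3$-axis together with lines of isolated zeros; alternatively, one simply cites the classification of stabilisers of harmonic cubics in \cite[Proposition 1]{bryant2006second}, which rules out all groups between $S_3$ and $\SO(3)$ for a cubic of this explicit form.

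The only mildly delicate point is to ensure that the planar rotation used to set $b=0$ is actually admissible, i.e.\ that it preserves the reduction $Q$ in \eqref{eqn: reduced bundle SL}. This follows from \cref{stabiliser}: for $\theta=\pi/4$ the residual gauge group contains the copy of $\SO(2)$ rotating $(x_1,x_2)$ while fixing $x_3$, which is precisely the freedom used above. Once this is noted, the proof reduces to the algebraic observations just described, so no serious obstacle remains.
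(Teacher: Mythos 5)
Your proof is correct and follows essentially the same route as the paper: the relations $h_{3ij}=0$ force the cubic into the form $a(x_1^3-3x_1x_2^2)+b(x_2^3-3x_2x_1^2)=\re\bigl((a+ib)(x_1+ix_2)^3\bigr)$, which either vanishes or has stabiliser $S_3$ — the paper leaves this last identification implicit (via Bryant's classification), while you spell it out by rotating to $b=0$. Your closing worry about gauge admissibility is unnecessary: the stabiliser of the fundamental cubic is well defined as a conjugacy class in $\SO(3)$ independently of the chosen orthonormal frame, so any planar rotation may be used without reference to the reduction $Q$.
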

We can also recover the result that $X$ is superminimal by showing that the second fundamental form $X$ in $N$ is complex-linear and using \cite[Proposition 1c]{montiel1997second}.
\begin{rmk}
 Bryant considers special Lagrangians of the form $C^1\times \Sigma^2 \subset \C \oplus \C^2=\C^3$. The cubic form of such submanifolds is always stabilised by $S_3$. These examples are somewhat analogous to horizontal Lagrangians whose fundamental cubic also admits an $S_3$ symmetry. 
\end{rmk}
From now on, we will work specifically with $M=\CP^3$.
We have seen that $L_X$ is either totally geodesic or its fundamental cubic has stabiliser $S_3$. If $L_X$ is homogeneous then $X$ is a homogeneous superminimal surface in $S^4$. Such a surface is equal to a totally geodesic $S^2\subset S^4$ or the Veronese curve in $S^4$. 
Hence, there are only two different examples of homogeneous special Lagrangian submanifolds with $\theta\equiv \frac{\pi}{4}$. Both of them are known as Lagrangians for the Kähler structure on $\CP^3$.
\begin{example}[The standard $\mathbb{RP}^3$]
 \label{standardRP3}
The standard $\mathbb{RP}^3\subset \mathbb{CP}^3$ is a totally geodesic special Lagrangian submanifold. It fibres over a totally geodesic $S^2$ in $S^4$ under the twistor fibration.
It is the orbit of $\left\{\begin{pmatrix}a & -\bar{b} \\ b & \bar{a}\end{pmatrix}\mid a,b\in \R \oplus j \R, \quad |a|^2+|b|^2=1\right\}\cong \SU(2)$ on $[1,0,0,0]$.
\end{example}
The second example was discovered in \cite{chiang2004new} and is described in \cite{konstantinov2017higher} in terms of the twistor fibration.
\begin{example}[Chiang Lagrangian]
\label{Chiang-Lagrangian}
 The $\SU(2)$ subgroup of $\mathrm{Sp}(2)$ which comes from the irreducible representation of $\SU(2)$ on $\C^4=S^3(\C^2)$ has a special Lagrangian orbit at $[1,0,0,1]\in \CP^3$.
 This example is known as the Chiang Lagrangian and fibres over the Veronese surface in $S^4$.
  The $\SU(2)$ subgroup acts with stabiliser $S_3$ on $[1,0,0,1]$. The stabiliser subgroup induces the full symmetry group of the fundamental cubic since the Chiang Lagrangian is not totally geodesic. 
\end{example}
Since superminimal curves in $S^4$ have an explicit Weierstraß parametrisation one can produce many (explicit) examples of special Lagrangians in $\CP^3$. However, our focus is on exploring special Lagrangians which do not arise from superminimal surfaces.
\subsection{Changing the Gauge}
If one expresses the nearly Kähler structure on $\CP^3$ in terms of local coordinates one can work out a system of PDE's which, at least locally, describes special Lagrangian submanifolds. However, this approach is not very likely to succeed since local coordinates on $\CP^3$ are not an elegant way to define its nearly Kähler structure. Of more geometric importance are the first and second fundamental form and \cref{prop: bonnet} shows that locally they contain all information about the submanifold. We use a gauge transformation, which depends on the function $\theta$ to describe the structure equations for a special Lagrangian in $\CP^3$. \par
The bundle $\mathrm{Sp}(2)$ embeds into the frame bundle of $\mathbb{CP}^3$ via the adjoint action of $S^1\times S^3$ on $\mathfrak{m}$ which factors through the double cover $S^1\times S^3\to \mathrm{U}(2)$. So, on the level of structure equations we identify $P_H$ with $\mathrm{Sp}(2)$.  We apply the gauge transformation $T_{\theta}$ to $\Sp(2)$, which defines the bundle $Q$ as in \cref{eqn: reduced bundle SL}. This bundle has a reduced structure group, depending on the behaviour of the function $\theta$, which is made precise in \cref{stabiliser}. For example, if $\theta$ avoids the values $0$ and $\frac{\pi}{4}$ then the structure group of $Q$ is $\Z_2$.\par
Recall from \cref{sec: nk structure cp3} that $\Sp(2)$ is an $S^1\times S^3$ principal bundle over $\CP^3$. A local unitary frame for the nearly Kähler structure on $\CP^3$ is obtained by pulling back the forms $(\omega_1,\omega_2,\omega_3)$, which are components of the Maurer-Cartan form on $\Sp(2)$.
We can realise the bundle $Q$ by setting $T^{-1}_{\theta}(\omega_1,\omega_2,\omega_3)=(\zeta_1,\zeta_2,\zeta_3)$, where $T_{\theta}$ is defined in \cref{eq: transformation matrix theta},
and imposing the equations
\begin{align}
\label{reduced bundle}
 \eta_1=0,\quad \eta_2=0,\quad \eta_3=0.
\end{align}
Our aim is to compute the differentials of the one forms $\zeta_i$ and also of $\rho_i$ and $\tau$ on the reduced bundle $Q$. We will achieve this by first computing the connection and curvature form in the transformed frame and then applying \cref{lagstructureeqns1}-\cref{lagstructureeqns3}. We begin by applying the transformation formula for a connection-one form under the gauge transformation $T_{\theta}$
\begin{align}
\label{gauge trafo}
 \phi=T_{\theta}^{-1} A_{\omega} T_{\theta}+T_{\theta}^{-1}d T_{\theta}.
\end{align}
Here $A_\omega$ is the connection form defined on $\mathrm{Sp}(2)$, see \cref{firststructure}.
Since $T_{\theta}$ lies in $\SU(3)$ the torsion transforms trivially and we have 
$\dd \zeta=-\phi\wedge \zeta-[\zeta]\wedge\zeta$
by \cref{eqn: torsion-relation}. So in order to compute the differentials of $\zeta$ we compute the transformed connection one-form $\phi$ from \cref{gauge trafo}. We split $\phi$ into real and imaginary part $\phi=\alpha+i\beta$ to get
\begin{align}
\label{connection-form2}
\alpha=
\begin{pmatrix} 0 & \frac{1}{\sqrt{2}}\re (i\exp(i\theta)\bar{\tau})  & \frac{-1}{\sqrt{2}}\re (\exp(-i\theta)\bar{\tau}) \\ 
\frac{1}{\sqrt{2}}\re (i\exp(i\theta)\tau) & 0 & \frac{1}{2}(3\rho_1+ \rho_2)\cos(2 \theta) \\
\frac{1}{\sqrt{2}}\re (\exp(-i\theta)\tau)& -\frac{1}{2}(3 \rho_1+\rho_2)\cos(2 \theta) & 0        
      \end{pmatrix}
\end{align}
and 
\begin{align}
\label{connection-form1}
\beta=
\begin{pmatrix} -\rho_1+\rho_2 & \frac{1}{\sqrt{2}}\im (i\exp(i\theta)\bar{\tau})  & \frac{-1}{\sqrt{2}}\im (\exp(-i\theta)\bar{\tau}) \\ 
\frac{1}{\sqrt{2}}\im (i\exp(i\theta)\tau) & \frac{1}{2}(\rho_1-\rho_2-2 \dd \theta)& \frac{1}{2} (3\rho_1+ \rho_2)\sin(2\theta) \\
\frac{1}{\sqrt{2}}\im (\exp(-i\theta)\tau) &\frac{1}{2} (3 \rho_1+\rho_2)\sin(2\theta) & \frac{1}{2}(\rho_1-\rho_2 +2 \dd \theta)        
      \end{pmatrix}.
\end{align}
To obtain expressions for the differentials of $\rho_i$ and $\tau$ we use the curvature equation \ref{lagstructureeqns3}.
The curvature tensor $(R+iS)\sigma \wedge \sigma$ transforms tensorially under gauge transformations yielding the explicit expressions
\begin{align}
\label{curvature-theta1}
 S\sigma\wedge \sigma&= \left(
\begin{array}{ccc}
 -\cos (2 \theta ) \sigma_2\wedge \sigma_3 &
   -\frac{1}{2} \cos (2 \theta )
   \sigma_1\wedge \sigma_3 & \frac{1}{2} \cos
   (2 \theta ) \sigma_1\wedge \sigma_2 \\
 -\frac{1}{2} \cos (2 \theta )
   \sigma_1\wedge \sigma_3 & \frac{1}{2} \cos
   (2 \theta ) \sigma_2\wedge \sigma_3 &
   \frac{5}{4} \sin (4 \theta )
   \sigma_2\wedge \sigma_3 \\
 \frac{1}{2} \cos (2 \theta )
   \sigma_1\wedge \sigma_2 & \frac{5}{4} \sin
   (4 \theta ) \sigma_2\wedge \sigma_3 &
   \frac{1}{2} \cos (2 \theta )
   \sigma_2\wedge \sigma_3 \\
\end{array}
\right)
\\
R\sigma\wedge \sigma &= \frac{1}{2}\left(
\begin{array}{ccc}
 0 &  \sigma_1\wedge (\sigma_2- \sin (2\theta )\sigma_3) & 
   \sigma_1\wedge (\sigma_3-
   \sin (2\theta ) \sigma_2) \\
   \sigma_1\wedge (\sin (2 \theta )\sigma_3-\sigma_2) &
   0 & 5\cos ^2(2 \theta )
   \sigma_2\wedge \sigma_3 \\
   \sigma_1\wedge (\sin (2 \theta )\sigma_2- \sigma_3) &
   -5 \cos ^2(2 \theta )
   \sigma_2\wedge \sigma_3 & 0 \\
\end{array}
\right).
\end{align}
Finally, combining the explicit expressions of $\phi, R, S$ with \cref{lagstructureeqns1}-\ref{lagstructureeqns3} results in the following differential identities
\begin{align}
\label{structureeqn-forms}
\begin{split}
 \dd \rho_1&=\frac{3}{2}\cos(2\theta) \sigma_2\wedge\sigma_3, \quad
 \dd \rho_2=\frac{1}{2}\cos(2 \theta) \sigma_2\wedge \sigma_3+i \tau \wedge \bar{\tau} \\
 \dd \tau &=-2i\tau \wedge \rho_2+\frac{1}{\sqrt{2}}\sigma_1\wedge(i \sigma_2 \exp(-i\theta)-\sigma_3 \exp(i\theta))\\
 \dd \sigma_1&=(\epsilon_1\wedge  \sigma_2+\epsilon_2 \wedge \sigma_3)+\sigma_2\wedge \sigma_3 \\
 \dd\sigma_2&=-\frac{1}{2}\cos(2\theta)(3\rho_1+\rho_2)\wedge \sigma_3-\epsilon_1 \wedge \sigma_1-\sigma_1\wedge \sigma_3 \\
 \dd\sigma_3&=\frac{1}{2}\cos(2\theta)(3\rho_1+\rho_2)\wedge \sigma_2- \epsilon_2\wedge \sigma_1+\sigma_1\wedge \sigma_2
\end{split}
\end{align}
with $\epsilon_1=\frac{i}{2\sqrt{2}}(\exp(i \theta)\tau-\exp(-i\theta)\bar{\tau})$ and $\epsilon_2=\frac{1}{2\sqrt{2}}(\exp(-i \theta)\tau+\exp(i\theta)\bar{\tau})$. Recall, that the equation $\beta\wedge \sigma$ implies more algebraic identities.\par 
These differential identities are satisfied on any special Lagrangian in $\CP^3$. Conversely, a special Lagrangian submanifold can locally be reconstructed from such a solution. There is little hope of working out all solutions of \cref{structureeqn-forms}. Instead, one typically imposes additional conditions and then tries to classify all special Lagrangians satisfying the extra condition. For example, one can already see that for $\theta=0$ the equations simplify considerably.
\par
If $\theta\neq \pi/4$ everywhere there is a splitting $TL=E\oplus E^\perp$ where $E^\perp$ is the kernel of the projection $TL\to \mathcal{V}$.
Recall that the standard complex structure $J_1$ on $\CP^3$ agrees with the nearly Kähler structure $J_2$ on $\mathcal{H}$ and differs by a sign on $\mathcal{V}$. 
\begin{prop}
\label{prop: CR submanifolds}
 The distribution $E$ is invariant under the standard complex structure $J_1$ on $\CP^3$ if and only if $\theta\equiv 0$. In that case, $L$ is a CR submanifold for the Kähler structure on $\CP^3$ with $E$ being the $J_1$-invariant distribution on $L$.
\end{prop}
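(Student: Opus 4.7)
The key observation is that the proposition is essentially pointwise, so my plan is to reduce it to the linear statements already established in \cref{lma: CR subspaces}. At a point $x\in L$, I choose a frame in the bundle $P_H$ identifying $T_xM$ with $\C^3$ so that $\mathcal{H}_x=\mathrm{span}(b_1,b_2)$ and $\mathcal{V}_x=\mathrm{span}(b_3)$. By \cref{prop: representatives Lag subspaces}, after acting by an element of $H$, I may arrange $T_xL=W_{\theta(x)}$. In this setup $E_x^\perp$, which is the kernel of the projection $T_xL\to \mathcal{V}_x$, coincides with $K_{W_{\theta(x)}}$, so $E_x$ corresponds to the orthogonal complement $K_{W_{\theta(x)}}^\perp$ inside $T_xL$.

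Next, I identify the Kähler complex structure $J_1$ on $\cp$ with the endomorphism $J'$ from \cref{def: J'}. This is immediate: $J_1$ coincides with the nearly Kähler $J$ on $\mathcal{H}$ and differs from it by a sign on $\mathcal{V}$, which is precisely how $J'$ acts by multiplication by $i$ on $b_1,b_2$ and by $-i$ on $b_3$. With both identifications in place, the second assertion of \cref{lma: CR subspaces} (that $K_W^\perp$ is $J'$-invariant iff $\theta=0$) immediately yields: $E_x$ is $J_1$-invariant iff $\theta(x)=0$. Taking this equivalence pointwise over all $x\in L$ establishes the first conclusion of the proposition.

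For the CR statement, assume $\theta\equiv 0$. Then $E$ is a $J_1$-holomorphic distribution of rank two in $TL$, and its orthogonal complement $E^\perp$ has rank one. To conclude that $L$ is a CR submanifold in the Kähler $\cp$, I must show that $E^\perp$ is totally real, i.e.\ $J_1(E^\perp)\subset \nu L$. Since $E^\perp\subset \mathcal{H}$ and $J_1$ agrees with $J$ on $\mathcal{H}$, this reduces to showing $J(E^\perp)\perp TL$, which is exactly the first assertion of \cref{lma: CR subspaces} (applicable because $\theta=0\neq \pi/4$).

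The main obstacle is not any calculation but bookkeeping: keeping track of the identifications between the submanifold data on $L$, the linear model $W_\theta\subset \C^3$, and the two almost complex structures $J$ and $J_1$, so that the pointwise statements of \cref{lma: CR subspaces} transport correctly to the manifold level. Once this is carefully spelled out, no further computation is required.
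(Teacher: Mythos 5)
Your proof is correct and takes essentially the same route as the paper: at each point one picks a frame identifying $T_xL$ with $W_{\theta(x)}$, $\mathcal{V}$ with $\mathrm{span}(b_3)$ and $J_1$ with $J'$, and then invokes \cref{lma: CR subspaces}. Your additional bookkeeping (matching $E^\perp$ with $K_W$, $E$ with $K_W^\perp$, and reducing the totally real condition for $E^\perp$ to the first assertion of that lemma via $E^\perp\subset\mathcal{H}$) merely spells out what the paper's proof leaves implicit.
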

\begin{proof}
 In each point $x\in L$ we can pick a frame $p:T_x M \to \C^3$ such that $TL$ is identified with $W_{\theta(x)}$, $\mathcal{V}$ with $\mathrm{span}(b_3)$ and $J_1$ with $J'$ from \cref{def: J'}. The statement follows from \ref{lma: CR subspaces}. If $\theta=0$ then $E$ is invariant under $J_1$ and $J_1(E^\perp)$ is orthogonal to $TL$, as required.
\end{proof}
CR immersion from $S^3$ to the Kähler $\mathbb{CP}^n$ have been studied in \cite{hu2018equivariant}.
The splitting $TL=E\oplus E^\perp$ gives an ansatz for Lagrangians arising as a product $X^2\times S^1$ such that $TX=E$. Indeed, we will give such an example for $\theta \equiv 0$ later. However, we first show that this ansatz fails when $\theta\neq 0$ and $X$ is compact.
Note that 
\[\omega_{\mathcal{V}}=\frac{i}{2}(\omega_3\wedge \bar{\omega_3})=\frac{1}{2}\cos(2\theta) \sigma_2 \wedge \sigma_3\]
and that $\dd \omega_{\mathcal{V}}$ is a multiple of $\reP$, which vanishes on $L$. This implies the following.
\begin{lma}
 If $\theta\neq \pi/4$ is constant then $\frac{2}{\cos(2\theta)}\omega_{\mathcal{V}}$ defines a calibration on $L$. The fibres of $E$ are the calibrated subspaces of 
 $\frac{2}{\cos(2\theta)}\omega_{\mathcal{V}}$. 
\end{lma}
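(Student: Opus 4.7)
The plan is to verify the two defining properties of a calibration: closedness of $\frac{2}{\cos(2\theta)}\omega_{\mathcal{V}}$ on $L$, and that its pairing with any unit simple tangent $2$-plane of $L$ is at most $1$, with equality precisely on the fibres of $E$.

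For closedness, since $\theta$ is constant the coefficient is constant, so it suffices to check that $\dd\omega_{\mathcal{V}}|_L=0$. The paragraph just before the lemma states that $\dd\omega_{\mathcal{V}}$ is a multiple of $\reP$, so this reduces to $\reP|_L=0$. The latter is automatic for any Lagrangian in a nearly Kähler manifold: $\omega|_L=0$ together with the nearly Kähler identity $\dd\omega=3\reP$ yields $\reP|_L=\tfrac{1}{3}\dd(\omega|_L)=0$.

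For the comass, the structure equations of \cref{sec: adapting frames} give $\omega_{\mathcal{V}}|_L=\tfrac{1}{2}\cos(2\theta)\,\sigma_2\wedge\sigma_3$, and the linear computation in the proof of \cref{prop: representatives Lag subspaces} shows that the norm of $\omega_{\mathcal{V}}$ restricted to the model Lagrangian plane $W_{\theta}\subset\C^3$ equals $\tfrac{1}{2}|\cos(2\theta)|$. Hence the comass of $\omega_{\mathcal{V}}$ on $T_xL\cong W_{\theta}$ is exactly $\tfrac{1}{2}\cos(2\theta)$, so the rescaled form has unit comass, establishing the calibration property. The calibrated $2$-planes are the unit simple $2$-vectors realising this maximum; in the adapted frame they are spanned by the frame vectors dual to $\sigma_2$ and $\sigma_3$, which under the identification of $T_xL$ with $W_{\theta}$ correspond to $K_{W_{\theta}}^\perp$, i.e.\ the orthogonal complement in $T_xL$ of the kernel of the projection $TL\to\mathcal{V}$, namely the fibre $E_x$.

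The main point requiring care is not a genuine obstacle but the bookkeeping that lines up the adapted frame on $Q$ with the linear model: once one recognises that $\sigma_2\wedge\sigma_3$ corresponds to the maximising direction $V_{\theta}\subset W_{\theta}$ from \cref{representativelemma}, both the calibration property and the identification of the calibrated planes with $E$ follow from the pointwise linear algebra combined with the closedness step above.
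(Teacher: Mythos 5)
Your proof is correct and follows the same route the paper takes implicitly in the sentence preceding the lemma: the identity $\omega_{\mathcal{V}}|_L=\tfrac{1}{2}\cos(2\theta)\,\sigma_2\wedge\sigma_3$ in the adapted frame gives unit comass with calibrated planes equal to the fibres of $E$, and closedness follows since $\dd\omega_{\mathcal{V}}$ is a multiple of $\reP$, which vanishes on any Lagrangian. You merely spell out the details the paper leaves tacit (why $\reP|_L=0$ and the linear comass computation on $W_\theta$), so there is nothing to add.
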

Since $\omega_{\mathcal{V}}$ is closed on $L$ it defines a cohomology class in $H^2(L,\R)$. We have that when pulled back to $\mathrm{Sp}(2)$, this class vanishes since
$\dd \rho_1=2 \omega_{\mathcal{V}}-\omega_{\mathcal{H}}=3\omega_{\mathcal{V}}$.
If $\theta$ takes values in $(0,\frac{\pi}{4})$ the structure group of $Q$ is just $\mathbb{Z}_2$, generated by $\mathrm{diag}(1,-1,-1)$ in $H$ which corresponds to $\mathrm{diag}(i,i)\in S^1\times S^3 \subset \mathrm{Sp}(2)$. This element leaves $\rho_1$ and $\rho_2$ invariant so in particular $\rho_1$ reduces to a form on $L$ and we have shown.
\begin{prop}
 If $\theta$ takes values in $(0,\pi/4)$ then $[\omega_{\mathcal{V}}]=0$. In particular, in this case $L$ does not have any compact two-dimensional submanifold which is tangent to $E$.
\end{prop}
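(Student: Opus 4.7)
My plan is to verify the two assertions separately, with the bulk of the work already set up by the structure equations and the discussion of the stabiliser group that precede the statement.

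For the first claim $[\omega_{\mathcal{V}}]=0$, the starting observation is that combining the first line of \cref{structureeqn-forms} with the expression $\omega_{\mathcal{V}}=\tfrac{1}{2}\cos(2\theta)\,\sigma_2\wedge\sigma_3$ gives $\dd\rho_1=3\,\omega_{\mathcal{V}}$ on the reduced bundle $Q$. So I only need to check that, under the hypothesis $\theta(L)\subset(0,\pi/4)$, the one-form $\rho_1$ descends from $Q$ to a globally defined one-form on $L$. By \cref{stabiliser} the structure group of $Q\to L$ is then the discrete group $\Z_2$ generated by $\operatorname{diag}(1,-1,-1)\in H$, which corresponds to $\operatorname{diag}(i,i)\in S^1\times S^3\subset\Sp(2)$. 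I would read off from \cref{MCeq} that right translation by this central-type element preserves the diagonal $\mathfrak{u}(1)$-components, hence fixes $\rho_1$ (and $\rho_2$). This gives $\omega_{\mathcal{V}}=\tfrac{1}{3}\dd\rho_1$ as an equation of forms on $L$, and therefore the cohomology class vanishes.

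For the "in particular" clause, suppose $X\subset L$ is a compact two-dimensional submanifold tangent to $E$. By the definition $E^\perp=\ker(TL\to\mathcal{V})$, from the linear model (\cref{representativelemma} together with the explicit form of $W_\theta$) $E^\perp$ is spanned by the vector dual to $\sigma_1$, so the restriction of the area-form on $E$ in the adapted coframe is $\sigma_2\wedge\sigma_3$, up to sign. Consequently $\omega_{\mathcal{V}}|_E=\tfrac12\cos(2\theta)\,\sigma_2\wedge\sigma_3$ is a strictly positive multiple of the area form along $X$, because $\cos(2\theta)>0$ on $(0,\pi/4)$. Hence $\int_X\omega_{\mathcal{V}}>0$. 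But Stokes' theorem applied to the primitive $\tfrac{1}{3}\rho_1$ on the closed surface $X$ gives $\int_X\omega_{\mathcal{V}}=0$, a contradiction.

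The only genuinely delicate point is the descent step: checking that the generator of the $\Z_2$ structure group really does act trivially on $\rho_1$ under the identification of $P_H$ with $\Sp(2)$. I would verify this by tracing through the double cover $S^1\times S^3\to\U(2)$ and observing that $\operatorname{diag}(1,-1,-1)\in H$ lifts to $\operatorname{diag}(i,i)$, which lies in the kernel of the adjoint action on the $i\R$-span of the diagonal entries of \cref{MCeq}. Everything else is an application of Stokes and the structure equations already written down.
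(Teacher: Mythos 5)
Your proof is correct and follows essentially the same route as the paper: the identity $\dd\rho_1=3\omega_{\mathcal{V}}$ on the adapted bundle together with the invariance of $\rho_1$ under the $\Z_2$ generator $\mathrm{diag}(1,-1,-1)\in H$ (i.e.\ $\mathrm{diag}(i,i)\in S^1\times S^3$) gives the descent of $\rho_1$ to $L$ and hence exactness of $\omega_{\mathcal{V}}$. Your Stokes argument for the second clause simply makes explicit what the paper leaves to the preceding calibration lemma, since $\omega_{\mathcal{V}}$ restricts to a nowhere-vanishing multiple of the area form on any surface tangent to $E$ when $\cos(2\theta)\neq 0$.
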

\subsection{Lagrangians with $\theta \equiv 0$}
The structure equations simplify significantly under the assumption $\theta\equiv 0$. Indeed, plugging $\theta=0$ into \cref{structureeqn-forms} yields $\beta_{32}=0$ and $\dd \theta=0$ implies $\beta_{22}=\beta_{33}$. Since we have $\beta\wedge \sigma=0$ Cartan's lemma implies that there is a single function $f\colon L \to \R$ such that \[\beta_{22}=\beta_{33}=-1/2 \beta_{11}=1/2 f \sigma_1,\quad \beta_{21}=1/2 f \sigma_2,\quad \beta_{31}=1/2 \sigma_3.\] The fundamental cubic is equal to $f(-x_1^3+3/2 x_2^2 x_1+3/2 x_3^2 x_1)$ which has stabiliser $\SO(2)$. Let $\gamma=\rho_1+\rho_2$, then
\[\alpha_{21}=-\frac{1}{2} f \sigma_3, \quad \alpha_{31}=\frac{1}{2} f \sigma_2, \quad \alpha_{32}=-\gamma-\frac{1}{2}f \sigma_1.\]
The structure equations are then equivalent to
\begin{align}
\label{eq: structure equations eqns theta0}
-1 + f + 2 f^2=0, \quad  d \gamma_1= \frac{1}{2} (5 - f) \sigma_2 \wedge \sigma_3.
\end{align}
Hence, there are two examples of special Lagrangian submanifolds $L_f$ with $\theta=0$, both of which are homogeneous and in particular compact. Neither of them is totally geodesic. Note that, as a subset of $\mathrm{Sp}(2)$, the adapted frame bundle is defined by the equations
\begin{align}
\label{theta0eqns}
f \sigma_1=\rho_1-\rho_2,\quad \tau= f\frac{\sigma_2+i\sigma_3}{\sqrt{2}}.
\end{align}
They are both orbits of a Lie group with Lie algebra equal to the span of 
\begin{align*}
\mathfrak{m}_1=\begin{pmatrix} i & 0 \\ 0& i \end{pmatrix}, \quad \mathfrak{m}_2=\begin{pmatrix} i f/{\sqrt{2}} & -1 \\ 1 & -i f/{\sqrt{2}} \end{pmatrix} \quad
\mathfrak{m}_3=\begin{pmatrix} -j &  -j \frac{i}{\sqrt{2}} \\  -j \frac{i}{\sqrt{2}} & j f \end{pmatrix}, \quad \mathfrak{m}_4=\begin{pmatrix} -j i &  j \frac{1}{\sqrt{2}} \\  j \frac{1}{\sqrt{2}} & j i f \end{pmatrix}.
\end{align*}
 Eq. \ref{eq: structure equations eqns theta0} shows that $f$ is in fact constant and must be equal to either $-1$ or $\frac{1}{2}$. So, there are two distinct examples of special Lagrangians with $\theta=0$. We describe the geometry of each of them.
\begin{example}
\label{s1s2}
There is a unique special Lagrangian with $\theta=0$ and $f=-1$. The reduced frame bundle over this submanifold is described by
\[\dd \sigma_1=0,\quad \dd \sigma_2=- \gamma \wedge \sigma_3, \quad \dd \sigma_3= \gamma \wedge \sigma_2, \quad \dd \gamma=3 \sigma_2 \wedge \sigma_3.\]
These are the structure equations of $S^1\times S^2$ where $S^2$ carries a metric of constant curvature $3$.
\end{example} 
\begin{example}
\label{berger}
There is a unique special Lagrangian with $\theta=0$ and $f=\frac{1}{2}$. The reduced frame bundle over this submanifold is described by
\[\dd \sigma_1=\frac{3}{2} \sigma_2\wedge \sigma_3,\quad \dd \sigma_2=- (\gamma+\frac{3}{2} \sigma_1) \wedge \sigma_3, \quad \dd \sigma_3= (\gamma+\frac{3}{2}\sigma_1) \wedge \sigma_2, \quad \dd \gamma=\frac{9}{4} \sigma_2 \wedge \sigma_3.\]
which are the structure equations of a Berger sphere.
\end{example}
\subsection{Lagrangians avoiding Boundary Values}
From now on we assume that $\theta$ is not identically zero or $\pi/4$ and denote by $L^*$ the open set where $\theta$ avoids these values. On $L^*$ the frame bundle reduces to a discrete bundle. For $J$-holomorphic curves in $\cp$ the second fundamental form is determined by two angle functions \cite{aslan2021transverse}. In contrast, $\theta$ alone does not determine the second fundamental form of $L^*$.
We let $\dd \theta=t_1 \sigma_1+t_2 \sigma_2+t_3 \sigma_3$ and $x=h_{221},y=h_{222},z=h_{322},w=h_{321}$ such that $\beta$ is entirely determined by $\theta$ and these quantities.
Clearly all of these functions are constant on orbits of Lie subgroups of $\mathrm{Sp}(2)$, the converse is also true.
\begin{prop}
\label{prop: homo-solutions}
 Any solution with $x,y,z,w$ and $\theta\in (0,\pi/4)$ constant is an orbit of a Lie group.
\end{prop}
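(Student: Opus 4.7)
The plan is to show that the reduced frame bundle $Q\subset\Sp(2)$ from \cref{eqn: reduced bundle SL} is an open subset of a left-coset of a three-dimensional Lie subgroup of $\Sp(2)$; projecting back to $\CP^3$ then exhibits $L^*$ as a Lie group orbit. Because the structure group of $Q\to L^*$ is the discrete group $\mathbb{Z}_2$ by \cref{stabiliser}, $Q$ itself is three-dimensional with global coframe $\sigma_1,\sigma_2,\sigma_3$, and the Maurer-Cartan form $\Omega_{MC}$ of $\Sp(2)$ pulls back to a map $T Q\to \mf{sp}(2)$ that we need to identify explicitly.

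The central step is to express every component of $\Omega_{MC}|_Q$ as a constant-coefficient linear combination of $\sigma_1,\sigma_2,\sigma_3$. First, $\eta_i\equiv 0$ on $Q$ by definition, so $\zeta_i=\sigma_i$; since $\theta$ is constant, the matrix $T_\theta$ has constant entries, and $(\omega_1,\omega_2,\omega_3)^T=T_\theta(\zeta_1,\zeta_2,\zeta_3)^T$ realises each $\omega_i$ as a constant-coefficient combination of the coframe. Next, the prescription $\beta=h\sigma$ with $h$ constant, combined with \cref{connection-form1}, yields a linear system that determines $\rho_1-\rho_2$ from $\beta_{11}$, determines $3\rho_1+\rho_2$ from $\beta_{23}$, and determines $\tau,\bar\tau$ from the pair $\beta_{12},\beta_{13}$. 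The relevant $2\times 2$ determinants are proportional to $\sin(2\theta)$ and $\cos(2\theta)$ respectively, both nonzero on $(0,\pi/4)$, so the system inverts and $\rho_1,\rho_2,\tau,\bar\tau$ are constant-coefficient combinations of the $\sigma_k$. Finally, $\alpha$ is read off from \cref{connection-form2} in terms of these same quantities, so every component of $\Omega_{MC}|_Q$ takes values in a fixed three-dimensional subspace $V\subset\mf{sp}(2)$.

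The conclusion follows from Frobenius together with left-invariance. Let $\mathcal{D}$ be the left-invariant distribution on $\Sp(2)$ with $\mathcal{D}_e=V$; then $Q$ is a three-dimensional integral submanifold of $\mathcal{D}$, which forces $\mathcal{D}$ to be integrable along $Q$. Since the bracket-closure condition $[V,V]\subset V$ is invariant under left translation, $\mathcal{D}$ is integrable everywhere and $V$ is a Lie subalgebra. Hence $Q$ is an open subset of the left-coset $g_0\exp(V)$ for any fixed $g_0\in Q$, and projecting via $\Sp(2)\to\CP^3$ exhibits $L^*$ as an open subset of the orbit of the conjugate subgroup $g_0\exp(V)g_0^{-1}$ through $g_0\cdot[1{:}0{:}0{:}0]$, as required. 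The main obstacle is the invertibility of the linear system in the second step, where the hypothesis $\theta\in(0,\pi/4)$ enters decisively: both $\sin(2\theta)$ and $\cos(2\theta)$ degenerate at the boundary values, which is why the cases $\theta\equiv 0$ and $\theta\equiv \pi/4$ have been treated separately in the preceding subsections.
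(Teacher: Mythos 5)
Your proposal is correct and takes essentially the same route as the paper: both observe that with $h$ (equivalently $x,y,z,w,\theta$) constant, the equations $\eta_i=0$, $\beta=h\sigma$ cut out a fixed linear subspace of $\mathfrak{sp}(2)$, so the adapted frame bundle inside $\Sp(2)$ is (a piece of) a coset of a Lie subgroup and projects to a group orbit in $\CP^3$. Your explicit inversion yielding $\rho_1,\rho_2,\tau$ in terms of the $\sigma_i$ (using $\sin 2\theta,\cos 2\theta\neq 0$) and the left-invariance/Frobenius argument for $[V,V]\subset V$ just spell out details the paper leaves implicit.
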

\begin{proof}
 Let $L$ be the special Lagrangian corresponding to this solution with adapted frame bundle $\hat{L}$. Then $\hat{L}$ is an integral submanifold of the EDS generated by $\eta_i,\beta-h \sigma$. By assumption, $h$ has constant coefficients which means that the equations $\eta_i=0,\beta=h \sigma$ describe a linear subspace of $\mathfrak{sp}(2)$ and hence $\hat{L}$ is a Lie group and $\hat{L}\to L$ is a double cover of Lie groups.
\end{proof}
 In principle, we could derive a set of PDE's for $\theta,x,y,z$ and $t_i$ from the structure equations but this is not practical in full generality. 
 However, there is a somewhat surprising homogeneous example.
 \begin{example}
 \label{exotic-solution}
 Setting 
\[x=-\sqrt{2/5},\quad y=0,\quad z=0,\quad w=-\frac{3}{5}\sqrt{3/2},\quad \theta=\frac{1}{2}\arccos(\frac{7 \sqrt{2}}{5 \sqrt{5}})\] is a solution to \cref{structureeqn-forms} and hence corresponds to a unique special Lagrangian in $\CP^3$. The fundamental cubic of this example is given by $\sqrt{\frac{2}{5}}(2x_1^3-3x_1 x_2^2-3x_1x_3^2)-\frac{9}{5}\sqrt{6}x_1x_2x_3,$
whose orientation preserving symmetry group is $\mathbb{Z}_2$ coming from $(x_2,x_3)\mapsto (-x_2,-x_3)$. The Ricci curvature is diagonal in the (dual) frame $\sigma_1,\sigma_2+\sigma_3,\sigma_2-\sigma_3$ in which $\mathrm{Ric}=\mathrm{diag}(-99/50,-27/50(-2+\sqrt{15}),27/50(2+\sqrt{15})).$
\end{example}
By \cref{prop: homo-solutions}, this example is homogeneous.
We have found new examples by imposing conditions on $\theta$. In each case, the fundamental cubic has non-trivial symmetries. The structure equations \cref{structureeqn-forms} only hold in a fixed gauge. This makes it difficult to classify special Lagrangians where the fundamental cubic has a symmetry everywhere. We do not have the gauge freedom to bring them into the standard form as in \cite[Proposition 1]{bryant2006second}. However, this poses no problem for the totally geodesic case.
\begin{prop}
\label{prop: totally geodesic class}
 Up to isometries, the standard $\mathbb{RP}^3$ is the unique totally geodesic special Lagrangian in $\CP^3$. 
\end{prop}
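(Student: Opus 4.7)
The plan is to impose $\beta \equiv 0$ on the reduced bundle $Q$, which by $\beta = h\sigma$ is equivalent to $L$ being totally geodesic, and to extract the consequences from the gauge formula \cref{connection-form1} together with the structure equations \cref{structureeqn-forms}. The argument splits according to the value of $\theta$ and aims to show that $\theta$ must equal $\pi/4$ identically, after which the resulting Lagrangian is identified with the standard $\mathbb{RP}^3$.

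Setting the diagonal entries of $\beta$ in \cref{connection-form1} to zero gives $\rho_1 = \rho_2$ and $\dd \theta = 0$, so $\theta$ is a constant in $[0, \pi/4]$. The off-diagonal entry $\beta_{23} = 0$ reads $(3\rho_1 + \rho_2)\sin(2\theta) = 0$; combined with $\rho_1 = \rho_2$, for any $\theta \in (0, \pi/4)$ this forces $\rho_1 = \rho_2 \equiv 0$ on $Q$. But then the structure equation $\dd \rho_1 = \tfrac{3}{2}\cos(2\theta)\sigma_2 \wedge \sigma_3$ of \cref{structureeqn-forms} is nonzero, a contradiction. Hence $\theta \equiv 0$ or $\theta \equiv \pi/4$. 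The boundary case $\theta \equiv 0$ is excluded by the analysis leading to \cref{eq: structure equations eqns theta0}: the scalar $f$ parametrising the cubic there must satisfy $2f^2 + f - 1 = 0$, whereas totally geodesic corresponds to $f = 0$, which is not a root.

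In the remaining case $\theta \equiv \pi/4$, the submanifold $L = L_X$ is a circle bundle over a superminimal surface $X \subset S^4$ as in \cref{subsection: pi4-Lagrangian}, with fundamental cubic of the shape $a(x_1^3 - 3 x_1 x_2^2) + b(x_2^3 - 3 x_1^2 x_2)$. Being totally geodesic means $a = b = 0$, and these coefficients encode the second fundamental form of $X$ in $S^4$ via the same correspondence used (in the converse direction) to deduce superminimality from Lagrangian-ness of $L_X$, invoking \cite[Proposition 1c]{montiel1997second}. Hence $X$ is totally geodesic in $S^4$, i.e.\ a great 2-sphere (which is automatically superminimal). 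The corresponding $L_{S^2}$ is the standard $\mathbb{RP}^3$ of \cref{standardRP3}, and uniqueness up to isometries of $\CP^3$ follows from the transitive action of $\Sp(2) \to \SO(5)$ on great 2-spheres in $S^4$.

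The main obstacle, beyond the routine derivation of the above structure-equation consequences, is the explicit identification of the pair $(a, b)$ with the second fundamental form of $X$ in the final step; this is where the twistor geometry of the fibration $L_X \to X$ enters in an essential way.
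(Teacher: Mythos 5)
Your handling of $\theta\in[0,\pi/4)$ is correct and essentially the paper's argument: on the open set where $\theta\in(0,\pi/4)$, $\beta=0$ forces $\rho_1=\rho_2=0$, contradicting $\dd\rho_1=\tfrac{3}{2}\cos(2\theta)\,\sigma_2\wedge\sigma_3\neq 0$ from \cref{structureeqn-forms}; and $\theta\equiv 0$ is ruled out because $f=0$ does not solve $-1+f+2f^2=0$ in \cref{eq: structure equations eqns theta0}. (The paper compresses both subcases into the claim that $\beta=0$ forces $\rho_1=0$; your use of the $f$-equation for $\theta\equiv 0$ is a legitimate and arguably cleaner substitute.)

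The genuine gap is in the case $\theta\equiv\pi/4$, and it is exactly the step you flag yourself: you never establish that the coefficients $(a,b)$ of the fundamental cubic of $L_X$ encode the second fundamental form of $X$ in $S^4$, so the implication $a=b=0\Rightarrow X$ totally geodesic is asserted, not proved. The cited result \cite[Proposition 1c]{montiel1997second} does not supply it: it converts complex-linearity of $\sff_X$ into superminimality, and says nothing about how $\sff_{L_X}$ and $\sff_X$ determine one another. Since the twistor projection is a Riemannian submersion with non-integrable horizontal distribution, relating the two second fundamental forms requires an O'Neill-type or moving-frame computation (e.g. on the reduced bundle $P_H\cap P_{\SO(3)}$) which your proposal omits; without it the reduction to totally geodesic surfaces in $S^4$ is incomplete. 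The paper avoids this descent altogether: when $\theta\equiv\pi/4$ the imaginary curvature term $S$ in \cref{curvature-theta1} vanishes, so by \cref{lagstructureeqns3} (together with the torsion relation) the ideal generated by the forms $\eta_i$ and $\beta_{ij}$ on $\Sp(2)$ is closed under exterior differentiation, and Frobenius' theorem gives a unique maximal four-dimensional integral submanifold through $e\in\Sp(2)$; hence up to isometry there is exactly one totally geodesic special Lagrangian with $\theta\equiv\pi/4$, which must be the standard $\mathbb{RP}^3$ of \cref{standardRP3}. To repair your version, either carry out the identification of the horizontal part of the cubic with $\sff_X$ explicitly, or replace that final step by this Frobenius argument.
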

\begin{proof}
There is no totally geodesic Lagrangian which lies in $\theta\in [0,\frac{\pi}{4})$. This is because in that case $\beta=0$ forces $\rho_1=0$ but this is a contradiction to the first equation of \ref{structureeqn-forms}. \par
If $L$ is a totally geodesic Lagrangian with $\theta\equiv \frac{\pi}{4}$ then the adapted frame bundle $Q$ is a four-dimensional submanifold of $\mathrm{Sp}(2)$ on which $\eta_i$ and $\beta$ vanish. If $\theta\equiv \frac{\pi}{4}$ then $S$ vanishes on the adapted bundle $Q$ and by \cref{lagstructureeqns3} the ideal generated by $\eta_i$ and $\beta_{ij}$ is closed under differentials. By Frobenius' theorem, there is a unique maximal submanifold on which these forms vanish that passes through the identity $e\in \mathrm{Sp}(2)$. Hence, up to isometries, there is a unique totally geodesic special Lagrangian in $\CP^3$ with $\theta=\frac{\pi}{4}$. We have already found this example, it is the standard $\mathbb{RP}^3\subset \CP^3$.
\end{proof}
\section{Classifying $\SU(2)$ invariant Special Lagrangians}
\label{sec: SU2 SL}
Instead of imposing symmetries on the fundamental cubic, we shall now impose them on the special Lagrangian itself. We have already encountered examples of homogoneous special Lagrangians.\par There are examples of special Lagrangians admitting a cohomogeneity one action of $\SU(2)$ in both $S^6$ and $\C^3$. In $S^6$, there is a unique example of this type, the squashed three-sphere \cite[Example 6.4]{lotay2011ruled}. In $\C^3$, the Harvey-Lawson examples \cite{bryant2006second, harvey1982calibrated}
\[L_c=\{(s+it)u\mid u\in S^2\subset \R^3, t^3-3s^2t=c^3\}\]
admit a cohomogeneity one action of $\SO(3)$ for $c\neq 0$. \par 
The situation in $\CP^3$ is different.
We show in this section that all special Lagrangians that admit an action of an $\SU(2)$ group of automorphism are in fact homogeneous and have already been described in the previous section.
We introduce $\SU(2)$ moment-type maps to prove this classification.
\subsection{$\SU(2)$ Moment Maps}
\label{subsec: SU2 moment-type maps}
Assume that $\SU(2)$ acts effectively on $M$ with three-dimensional principal orbits and by nearly Kähler automorphisms.
Let $\{\xi_1,\xi_2,\xi_3\}$ be a basis of $\mathfrak{su}(2)$ such that $[\xi_i,\xi_j]=-\epsilon_{ijk} \xi_k$. 
Denote the corresponding fundamental vector fields by $K^{\xi_i}$. The map $\xi\to K^{\xi}$ is an anti Lie algebra homomorphism. Hence, the vector fields $K^{\xi_i}$ obey the standard Pauli commutator relationships $[K^{\xi_i},K^{\xi_j}]=\epsilon_{ijk} K^{\xi_k}$.
Consider the map \[\mu=(\mu_1,\mu_2,\mu_3)=(\omega(K^{\xi_2},K^{\xi_3}),\omega(K^{\xi_3},K^{\xi_1}),\omega(K^{\xi_1},K^{\xi_2})).\] Then $\mu\colon M\to \R^3$ is an $\SU(2)$ equivariant map with respect to the action of $\SU(2)$ on $\R^3$ coming from the double cover $\SU(2)\to \SO(3)$. In addition, define the invariant scalar function \[\nu=\imP(K^{\xi_1},K^{\xi_2},K^{\xi_3}).\]

 The map $\mu$ is not a multi-moment-type map in the sense of \cite[Definition 3.5]{madsen2013closed}. The Lie-kernel of $\Lambda^2 \mathfrak{su}(2)\to\mathfrak{su}(2)$ is trivial, so there is no non-trivial multi-moment map for the three form $\reP$. On the other hand, the map $\Lambda^3 \mathfrak{su}(2)\to \Lambda^2 \mathfrak{su}(2)$ is trivial and $\nu$ is a multi-moment map with values in $\R\cong \Lambda^3 \mathfrak{su}(2)$ for $-\frac{1}{2}\omega\wedge \omega$. We will refer to $\mu$ and $\nu$ as multi-moment-type maps.\par
The general strategy to obtain moment-type maps is to contract Killing vector fields with the nearly Kähler forms. Using a standard argument, the following lemma shows that all such combinations are exhausted by $\mu$ and $\nu$.
\begin{lma}
 The form $\reP$ vanishes on $SU(2)$ orbits, i.e. $\reP(K^{\xi_1},K^{\xi_2},K^{\xi_3})=0$.
\end{lma}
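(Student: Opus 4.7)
The plan is to read off $\reP(K^{\xi_1},K^{\xi_2},K^{\xi_3})$ from the nearly K\"ahler identity $d\omega=3\reP$ by evaluating both sides on the triple of fundamental vector fields, and to show that the left-hand side vanishes. Using the invariant formula for the exterior derivative of a two-form, one gets
\[
d\omega(K^{\xi_1},K^{\xi_2},K^{\xi_3}) \;=\; \sum_{\mathrm{cyc}} K^{\xi_i}\omega(K^{\xi_j},K^{\xi_k}) \;-\; \sum_{\mathrm{cyc}} \omega\!\left([K^{\xi_i},K^{\xi_j}],K^{\xi_k}\right),
\]
where the first sum is exactly $K^{\xi_1}\mu_1+K^{\xi_2}\mu_2+K^{\xi_3}\mu_3$ by the definition of $\mu$.

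First I would dispose of the bracket terms. The Pauli-type commutators $[K^{\xi_i},K^{\xi_j}]=\epsilon_{ijk}K^{\xi_k}$ convert each bracket term into $\pm\omega(K^{\xi_k},K^{\xi_k})$, which is zero by skew-symmetry of $\omega$.

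Next I would show that each of the three remaining scalar derivatives $K^{\xi_i}\mu_i$ vanishes on its own. The key input is that $\SU(2)$ acts by nearly K\"ahler automorphisms, so $\mathcal{L}_{K^{\xi_i}}\omega=0$. Combining this with the Leibniz rule for a Lie derivative on a two-form gives
\[
K^{\xi_i}\omega(K^{\xi_j},K^{\xi_k}) \;=\; \omega\!\left([K^{\xi_i},K^{\xi_j}],K^{\xi_k}\right) \;+\; \omega\!\left(K^{\xi_j},[K^{\xi_i},K^{\xi_k}]\right).
\]
Taking $(i,j,k)$ to be a cyclic permutation of $(1,2,3)$ and using the Pauli commutators once more, both terms on the right collapse into expressions of the form $\omega(K^{\xi_\ell},K^{\xi_\ell})=0$, so $K^{\xi_i}\mu_i=0$ for each $i$. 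Equivalently, this is the infinitesimal statement of $\SU(2)$-equivariance of $\mu\colon M\to\R^3$: the vector field $K^{\xi_i}$ rotates $\mu$ about its $i$-th axis, so it annihilates the $i$-th component.

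Putting the two steps together, $d\omega(K^{\xi_1},K^{\xi_2},K^{\xi_3})=0$, and $d\omega=3\reP$ gives the claim. There is no real obstacle here; the only thing to keep straight is the cyclic bookkeeping and the sign conventions in the Pauli relations, so that the brackets pair each $K^{\xi_\ell}$ with itself inside $\omega$.
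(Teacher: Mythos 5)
Your proof is correct, but it takes a genuinely different route from the paper. The paper argues globally: since $\SU(2)$ acts by automorphisms, $\reP$ restricted to a three-dimensional orbit $\mathcal{O}$ is an invariant $3$-form on a compact homogeneous $3$-manifold, hence a constant multiple $\lambda\,\mathrm{vol}_{\mathcal{O}}$; because $\reP=\tfrac{1}{3}\dd\omega$ is exact, Stokes' theorem forces $\lambda\,\mathrm{vol}(\mathcal{O})=\int_{\mathcal{O}}\reP=0$, so $\lambda=0$ (lower-dimensional orbits are trivial since the $K^{\xi_i}$ are then linearly dependent). You instead argue pointwise: you evaluate $\dd\omega=3\reP$ on $(K^{\xi_1},K^{\xi_2},K^{\xi_3})$ via the Koszul formula, kill the bracket terms using $[K^{\xi_i},K^{\xi_j}]=\epsilon_{ijk}K^{\xi_k}$ and skew-symmetry of $\omega$, and kill the derivative terms $K^{\xi_i}\mu_i$ using $\mathcal{L}_{K^{\xi_i}}\omega=0$ together with the same brackets; both mechanisms are verified correctly, and your Lie-derivative identity is exactly the infinitesimal equivariance of $\mu$ noted in the paper. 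What each approach buys: the paper's argument is shorter and exploits compactness of the orbits, but it is global and needs the case split on orbit dimension; your argument is local and uniform in the point, needs no compactness (so it would apply verbatim to actions of non-compact three-dimensional groups with the same commutation relations, where the integral argument is unavailable), and makes transparent that the vanishing is forced by invariance of $\omega$ plus the specific $\mathfrak{su}(2)$ structure constants rather than by topology. The only caveat is cosmetic: sign or normalisation conventions in the Koszul formula do not affect the conclusion, as you note.
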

\begin{proof}
 Let $\mathcal{O}$ be a three-dimensional orbit of $SU(2)$. Since $SU(2)$ acts by isometries on $M$ we have that $\mathrm{vol}_\mathcal{O}$ is a $SU(2)$ invariant form on $\mathcal{O}$. The same holds for $\reP|_\mathcal{O}$. So there is $\lambda \in \R$ such that $ \reP|_\mathcal{O} =\lambda \mathrm{vol}_{\mathcal{O}}$. Since $\reP$ is exact 
 $\lambda \mathrm{vol}(\mathcal{O})=\int_{\mathcal{O}} \reP=0$
 i.e. $\lambda=0$.
\end{proof}
 Since $\psi=\reP+ i \imP$ is non-degenerate this means that $\nu$ vanishes if and only if $K^{\xi_1},K^{\xi_2},K^{\xi_3}$ are linearly dependent over $\mathbb{C}$. By Cartan's formula and the nearly Kähler structure equations we get
 \begin{align}
 \label{d nu}
  \dd \nu=2 \sum_l \mu_l \omega(K^{\xi_l},\cdot), \text{ and } \dd \mu_k&=-\omega(K^{\xi_k},\cdot)+3 \reP(K^{\xi_i},K^{\xi_j},\cdot)
 \end{align}
where $(i,j,k)$ is a cylic permutation of $(1,2,3)$. The following proposition is somewhat similar to the toric situation \cite{dixon2019multi} as we can identify $\SU(2)$ invariant special Lagrangians orbits by the values of the maps $\mu$ and $\nu$.
\begin{prop}
The orbit of a point $x\in M$ is special Lagrangian if and only if $\nu(x)\neq 0$ and $\mu(x)=0$. The set $\nu^{-1}(0)\cap \mu^{-1}(0)$ is a union of fixed points of the $SU(2)$ action and two-spheres on which $\omega$ vanishes.
If $M$ has non-vanishing Euler characteristic then $0$ lies in the image of $\nu$.
The function $\nu$ is not constant and the set of points in which $\dd \nu=0$ and $\nu \neq 0$ consists of special Lagrangian orbits.
\end{prop}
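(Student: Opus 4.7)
The plan is to treat the four claims in sequence, using three main ingredients: the differential identities~\cref{d nu}, the preceding lemma that $\reP$ vanishes on $\SU(2)$-orbits, and the classification of closed subgroups of $\SU(2)$. For the first characterization, I would observe that any $\SU(2)$-orbit is automatically minimal and orientable, so being special Lagrangian is equivalent to being Lagrangian of dimension~$3$. The orbit has dimension~$3$ at $x$ precisely when $K^{\xi_1},K^{\xi_2},K^{\xi_3}$ are linearly independent there, while $\mu(x)=0$ is exactly the pointwise vanishing of $\omega$ on the orbit. To couple this with $\nu$, I would combine the lemma with the non-degeneracy of $\psi$ on any Lagrangian $3$-plane: on such an orbit $\psi(K^{\xi_1},K^{\xi_2},K^{\xi_3})=i\nu\neq 0$, while conversely $\nu(x)\neq 0$ forces linear independence of the three fundamental vector fields.

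For the structure of $\nu^{-1}(0)\cap\mu^{-1}(0)$, the closed subgroups of $\SU(2)$ force orbit dimension to be $0$, $2$, or $3$; since $\nu=0$ excludes the $3$-dimensional case by part~(1), I would conclude that this locus is a union of fixed points and $2$-dimensional orbits of the form $\SU(2)/\U(1)\cong S^2$, on which $\mu=0$ is precisely the vanishing of $\omega$. For claim~(3) I would invoke the standard fact that a compact connected Lie group acting on a compact manifold with $\chi\neq 0$ has a fixed point (via the equivariant Lefschetz fixed point theorem applied to a maximal torus in $\SU(2)$); at such a point all $K^{\xi_i}$ vanish, hence $\nu=0$.

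For the critical-point half of claim~(4), assuming $\dd\nu(x)=0$ and $\nu(x)\neq 0$, equation~\cref{d nu} yields $\sum_l\mu_l(x)\,\omega(K^{\xi_l},\cdot)(x)=0$; since $\nu(x)\neq 0$ the orbit at $x$ is $3$-dimensional and the $K^{\xi_l}$ are linearly independent, so non-degeneracy of $\omega$ makes the three $1$-forms $\omega(K^{\xi_l},\cdot)$ linearly independent, forcing $\mu(x)=0$ and hence a special Lagrangian orbit by~(1). For non-constancy, I would suppose $\nu\equiv c$ and apply the same argument on the open principal stratum to conclude that every principal orbit is Lagrangian, so $c\neq 0$ by~(1); but any fixed point (present by~(3) whenever $\chi(M)\neq 0$) forces $c=0$, a contradiction. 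The main obstacle is this last step: the non-constancy argument threatens to become circular unless one secures both the existence of a $3$-dimensional principal stratum (from the standing hypothesis) and of a singular orbit at which $\nu$ vanishes, so the interplay must be handled carefully when $\chi(M)$ might vanish.
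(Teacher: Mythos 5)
Your handling of the first two claims and of the critical-point statement follows the paper's own route: $\mu(x)=0$ expresses the vanishing of $\omega$ on the orbit, the lemma on $\reP$ together with non-degeneracy of $\psi$ identifies $\nu(x)\neq 0$ with (complex) linear independence of the $K^{\xi_i}$, and at a point with $\dd\nu=0$, $\nu\neq 0$, the identity \cref{d nu} plus non-degeneracy of $\omega$ forces $\mu=0$; the paper performs the same computation after rotating so that $\mu_2=\mu_3=0$. One intermediate claim in your third step is wrong, though easily repaired: a compact connected group acting on a compact manifold with $\chi\neq 0$ need not have a fixed point of the \emph{whole} group, and in fact the subgroups $K_2$ and $K_3$ act on $\CP^3$ (where $\chi=4$) without any fixed point, since $\rho_1\oplus\rho_1$ and $\rho_3$ contain no one-dimensional invariant subspace. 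All you need is a zero of a single Killing field: a fixed point of the maximal torus (which your Lefschetz argument does provide), or Poincar\'e--Hopf applied to $K^{\xi_1}$ on the compact $M$, already gives $K^{\xi_1}=0$ and hence $\nu=0$ there; ``all $K^{\xi_i}$ vanish'' is neither available nor needed. Both routes use compactness, which the statement leaves implicit.

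The gap you flag in the non-constancy of $\nu$ is genuine, and it is not closed by an idea you merely failed to spot in the paper. The paper's proof asserts in one line that constancy of $\nu$ would make the $K^{\xi_i}$ linearly dependent everywhere via \cref{d nu}; but, exactly as in your own reasoning, at a point of linear independence one only deduces $\mu=0$, hence a Lagrangian orbit and $\nu\neq 0$ there. That disposes of $\nu\equiv 0$, while for a constant nonzero value it yields independence everywhere and no contradiction. In fact the constant nonzero case can occur in the stated generality: on the nearly K\"ahler $S^3\times S^3$, the $\SU(2)$ acting on the first factor has every orbit equal to the Lagrangian $S^3\times\{q\}$, and since $\nu$ is invariant under this action and under the commuting second-factor automorphisms, which together act transitively, $\nu$ is a nonzero constant. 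So your caution about $\chi(M)=0$ is exactly right: with the repair above, your argument establishes non-constancy whenever $M$ is compact with $\chi(M)\neq 0$ (in particular for $\CP^3$, which is all that is used later in the paper), but neither your argument nor the paper's one-liner proves the claim for a general nearly K\"ahler $M$, and the $S^3\times S^3$ example shows it cannot be proved there.
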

\begin{proof}
By the definition of $\mu$, the two-form $\omega$ vanishes on the $\SU(2)$ orbit of $x$ if and only if $\mu(x)=0$. If $\nu(x)\neq 0$ then $K^{\xi_i}$ are linearly independent at $x$ and the orbit at $x$ is 3-dimensional, which implies the first statement.
If $\nu(x)=0$ then the orbit has dimension less than three and the second statement follows from the fact that lower-dimensional $\SU(2)$ orbits must be points or two-spheres. \par
Eq. \ref{d nu} implies that if $\nu$ is constant then $(K^{\xi_1},K^{\xi_2},K^{\xi_3})$ are linearly dependent everywhere which contradicts the principal orbit type being three-dimensional. If $\chi(M)\neq 0$ then any vector field $K^{\xi_i}$ must have a zero, which forces $\nu$ to vanish.
Finally, consider a point $x$ in which $\dd \nu=0$ and $\nu \neq 0$. We want to show that $\mu(x)=(0,0,0)$. Using the action of $SU(2)$ we can assume that $\mu_2(x),\mu_3(x)=0$. Then $0=JK^{\xi_1}\nu=-2 \|K^{\xi_1}\|^2\mu_1$. But $\nu\neq 0$ and hence $\mu_1(x)=0$.
\end{proof}
Since either the maximum or minimum of $\nu$ is not zero this implies an existence result for special Lagrangians.
\begin{crl}
\label{crl: existence SL orbits}
 If $M$ is compact then the $SU(2)$ action has a special Lagrangian orbit.
\end{crl}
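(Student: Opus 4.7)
The plan is to deduce the existence of a special Lagrangian orbit by a pigeonhole-style extremal argument on the function $\nu$, using the final clause of the preceding proposition as a black box.

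First, since $M$ is compact, the smooth function $\nu\colon M\to \R$ attains both its maximum and its minimum on $M$. By the proposition we have just established, $\nu$ is not identically constant, so at least one of these extreme values is nonzero: otherwise $\nu \equiv 0$. Pick a point $x\in M$ at which $\nu$ attains this nonzero extremum. Then automatically $\dd\nu(x)=0$ and $\nu(x)\neq 0$.

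Next, apply the final assertion of the proposition: any point where $\dd\nu$ vanishes and $\nu\neq 0$ lies on a special Lagrangian orbit of the $\SU(2)$ action. Hence the orbit through $x$ is a special Lagrangian submanifold of $M$, which is exactly the conclusion.

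There is essentially no obstacle here; the heavy lifting was done in establishing the proposition, which equates $\mu(x)=0,\ \nu(x)\neq 0$ with the orbit being special Lagrangian and which, via the identity $JK^{\xi_1}\nu = -2\|K^{\xi_1}\|^2\mu_1$ and the $\SU(2)$ action on $\mu$, forces $\mu(x)=0$ at any critical point of $\nu$ with $\nu(x)\neq 0$. The only mild subtlety to flag is that one must justify that $\nu$ is not constant in the compact case, which is immediate from \eqref{d nu}: a constant $\nu$ forces $K^{\xi_1},K^{\xi_2},K^{\xi_3}$ to be $\C$-linearly dependent everywhere, contradicting the assumption that principal orbits are three-dimensional. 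With this, the corollary follows in one line from the extremal value theorem.
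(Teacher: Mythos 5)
Your argument is correct and is essentially the paper's own proof: compactness gives extrema of $\nu$, non-constancy of $\nu$ (from the preceding proposition) forces a nonzero extreme value, and the proposition's final clause (critical points with $\nu\neq 0$ lie on special Lagrangian orbits) finishes the claim. No issues.
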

If $L$ is a special Lagrangian submanifold on which a $\SU(2)$ subgroup acts then $L$ will lie in the vanishing set of $\mu$. 
So we can classify all $\SU(2)$ invariant special Lagrangian submanifolds of $\CP^3$ by computing the vanishing set of $\mu$ for every $\SU(2)$ subgroup of $\mathrm{Sp}(2)$.
 \begin{dfn}
 \label{dfn: subgroups}
  Define the three $\SU(2)$ subgroups of $\Sp(2)$ as $K_1=\{1\}\times\mathrm{Sp}(1)$, $K_2=\SU(2)$, arising from the inclusion $\C^2\subset \HH^2$, and $K_3$ which comes from the irreducible representation of $\SU(2)$ on $S^3(\C^2)\cong \C^4$.
 \end{dfn}
Any three-dimensional subgroup of $\mathrm{Sp}(2)$ is conjugate to one of $K_1,K_2,K_3$.
\begin{rmk}
Note that $\SO(4)$ contains two $\SU(2)$ subgroups that do not stabilise a vector in $\R^4$. They are not conjugated to each other and, on the Lie algebra level, correspond to the splitting of $\Lambda^2(\R^4)$ into self-dual and anti-self-dual two forms. However, in $\SO(5)$, these two Lie algebras are conjugated to each other, for example via the element $(x_4,x_5)\mapsto (-x_4,-x_5)$. Since $\mathrm{Sp}(2)=\mathrm{Spin}(5)$ the same holds true for the corresponding $\SU(2)$ subgroups in $\mathrm{Sp}(2)$.
\end{rmk}
The groups $K_i$ naturally act on $S^4$ through the double cover $\Sp(2)\to \SO(5)$.
 The group $K_1$ acts via $\SU(2)\subset \SO(4)$, the group $K_2$ via the double cover $\SU(2)\to \SO(3)$ leaving a plane in $\R^5$ invariant and $K_3$ acts irreducibly on $\R^5$ and factors through $\SO(3)$.
To relate the group invariant examples to those found in the previous section we compute the function $\theta$ for group orbits, for which we use \cref{eq: formula for theta}. To this end, it makes sense to define $\mu_{\mathcal{V}}=(\omega_{\mathcal{V}}(K^{\xi_2},K^{\xi_3}),\omega_{\mathcal{V}}(K^{\xi_3},K^{\xi_1}),\omega_{\mathcal{V}}(K^{\xi_1},K^{\xi_2})).$
The Killing vector fields corresponding to the subgroups $K_i$ admit quite simple expressions in local coordinates. So, to express $\nu$ for $K_i$ in homogeneous coordinates we need to do so for the nearly Kähler form $\omega=\frac{i}{2}\sum_i \omega_i \wedge \bar{\omega}_i$. This is the essence of \cref{eq: pull back local frame}, where the forms $\omega_i$ are pulled back to a chart in $\CP^3$ by a local section.\par
It will be challenging to compute $\nu$ for $K_2$ and $K_3$, so we first establish  representation theoretic results to simplify the computations.
In \cite{galindo2002two}, it is shown that given an irreducible finite-dimensional continuous real representation of a compact Lie group $G$, the intersection of any hyperplane and any group orbit is non-empty. The authors of \cite{galindo2002two} pose the question whether the same statement holds for complex representations, in particular irreducible representations of $\SU(2)$. There is a general framework to relate this question to the existence of nowhere vanishing sections in bundles over the flag manifold $G/T$ \cite{an2010universal}. The following result follows a similar strategy and gives a direct proof for $G=\SU(2)$.
\begin{lma}
\label{repsu2}
 Let $(V,\rho)$ be a finite dimensional unitary representation of $G=\SU(2)$ with all weights non-zero and $H$ be a hyperplane which is invariant under the maximal torus $\mathrm{U}(1)\subset \SU(2)$. Then $H$ intersects every $G$ orbit. 
\end{lma}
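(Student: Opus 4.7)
The plan is to reformulate the claim as the non-vanishing of a section of an associated line bundle over $T\backslash G\cong \CP^1$, where $T=\U(1)\subset \SU(2)$ is the maximal torus; once we establish that the relevant line bundle is non-trivial, the non-existence of nowhere-zero sections immediately produces an orbit point in $H$.

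First, I would use the unitary structure to split $V=H\oplus H^\perp$ orthogonally. Since $H$ is $T$-invariant so is $H^\perp$, and being one-dimensional $H^\perp$ is a weight space for some character $\lambda$ of $T$. The hypothesis that every weight of $V$ is non-zero forces $\lambda\neq 0$. For an arbitrary $v\in V$ consider the smooth map
\[ f\colon G\to H^\perp\cong \C, \qquad f(g)=\pi_{H^\perp}(\rho(g)v), \]
where $\pi_{H^\perp}$ is the orthogonal projection. Because $\rho(t)$ preserves $H^\perp$ and acts there by $\lambda(t)$, one computes $f(tg)=\lambda(t)f(g)$, so $f$ descends to a smooth section of the associated line bundle $L_\lambda = G\times_T \C_\lambda$ on $T\backslash G\cong\CP^1$.

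The remainder of the argument is purely topological: if $\lambda\neq 0$, then $L_\lambda$ is a Hopf-type line bundle of non-zero first Chern class on $\CP^1$, hence non-trivial, and therefore admits no nowhere-vanishing smooth section. Consequently $f(g_0)=0$ for some $g_0\in G$, i.e.\ $\rho(g_0)v\in H$, and since $v$ was arbitrary every $G$-orbit meets $H$.

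The step I expect to be the only substantive one is the non-triviality of $L_\lambda$ and the resulting impossibility of a nowhere-zero section. Concretely, one can verify this by noting that a nowhere-vanishing section would yield a $\U(1)$-equivariant map $\SU(2)\to\C^*$ of non-zero weight, which contradicts the standard Chern class/degree computation on the Hopf fibration. All remaining steps, namely the weight analysis of $H^\perp$ and the equivariance identity $f(tg)=\lambda(t)f(g)$, are routine.
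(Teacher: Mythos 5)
Your proposal is correct and follows essentially the same route as the paper: both identify $H^\perp$ as a non-zero weight space, turn the obstruction into a $\U(1)$-equivariant function on $\SU(2)$, view it as a section of the associated line bundle over $S^2\cong\CP^1$ coming from the Hopf fibration, and use the non-vanishing Chern class (degree $\pm k\neq 0$) to force a zero of that section. The only difference is presentational — you argue directly via the projection $\pi_{H^\perp}$, while the paper runs the same argument by contradiction with $f\colon V\to\C$, $\ker f=H$.
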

\begin{proof}
 Since $H$ is $\mathrm{U}(1)$ invariant there is a linear $\mathrm{U}(1)$ equivariant map $f\colon V \to \C$ such that $\mathrm{ker}(f)=H$. Assume that there is an $x \in V$ such that $G.x\cap H=\emptyset$. Then $s\colon g\mapsto f(gx)$ is a non-vanishing $\mathrm{U}(1)$ equivariant map $\SU(2)\to \C$. Restricting this map to $\mathrm{U}(1)\subset\SU(2)$ gives a representation $\tau$ of $\mathrm{U}(1)$ on $\C$ of weight $k\in \mathbb{Z}$.\par Note that the principal bundle $\SU(2)\to \SU(2)/{\U(1)}=S^2$ is the Hopf fibration and that $s$ gives rise to a nowhere vanishing section of the associated bundle $E=\SU(2)\times_{\tau} \C$ over $S^2$. Since the Hopf fibration has non-trivial Chern class, the complex line bundle $E$ is trivial which forces $k=0$. This is a contradiction because $f$ restricts to an equivariant isomorphism from $H^\perp$ to $\C$, so $H^\perp$ is a zero-weight subspace.
\end{proof}
Note that, in the situation above, $H$ is invariant under $\mathrm{U}(1)$ and the action of $\mathrm{U}(1)$ on $H$ splits into one-dimensional components. Then every $G$ orbit also intersects the set $H'\subset H$ where one of the $\C$ components is restricted to the set $\R_{\geq 0}$.  \par All the actions of $K_i$ on $\CP^3$ factor through an action of $\SU(4)$ on $\C^4$. The irreducible action $\rho_k$ of $\SU(2)$ on $S^k(\C^2)$ has weights $(k,k-2,\dots,-k+2,-k)$. The action of $K_2$ on $\CP^3$ factors through $\rho_1\oplus \rho_1$ on $\C^4$ and $K_3$ through $\rho_3$ on $\C^4$. In particular neither has a zero weight, so \cref{repsu2} applies to these cases.
\subsubsection{$K_1$}
Recall $K_1=\{1\}\times \mathrm{Sp}(1)$, we compute the Killing vector fields on the chart $\mathbb{A}_0=\{Z_0 \neq 0\}$
\[K^{\xi_1}=-\im(Z_2 \frac{\partial}{\partial Z_2}-Z_3 \frac{\partial}{\partial Z_3}), \quad K^{\xi_2}=\re(Z_3 \frac{\partial}{\partial Z_2}-Z_2 \frac{\partial}{\partial Z_3}), \quad K^{\xi_3}=\im(Z_3 \frac{\partial}{\partial Z_2}+Z_2 \frac{\partial}{\partial Z_3}).\]
We contract these vector fields with the nearly Kähler forms $\omega$ and $\psi$ in homogeneous coordinates from \cref{eq: pull back local frame},
which gives
\begin{align*}
 \nu&=-|Z|^{-6}\frac{1}{2}(|Z_0|^2+|Z_1|^2)(|Z_2|^2+|Z_3|^2)^2,\quad \mu_1=|Z|^{-2}(|Z_3|^2-|Z_2|^2) f\\
 \mu_2+i \mu_3&= 2 i |Z|^{-2} Z_2 \overline{Z_3} f,\quad f=\frac{1}{4}|Z|^{-2}(-2(|Z_0|^2+|Z_1|^2)+(|Z_2|^2+|Z_3|^2)).
\end{align*}
Hence, $\nu$ vanishes on the line of fixed points $\{Z_2=Z_3=0\}$ or when $f=0$. Note that $\mathrm{Sp}(1)\times\mathrm{Sp}(1)$ is the centraliser of $K_1$ in $\mathrm{Sp}(2)$, acts with cohomogeneity one on $\CP^3$ and the orbits of that action are the level sets of $f$. In particular $\mathrm{Sp}(1)\times \mathrm{Sp}(1)$ acts transitively on $f=0$ which means that up to isometries there is a unique special Lagrangian on which $K_1$ acts. Hence, for simplification we consider the orbit $\mathcal{O}_{11}$ at the point $P_{11}=[1,0,\sqrt{2},0]$. At this point, $K^{\xi_1}$ annihilates $\omega_{\mathcal{V}}$ which means that evaluating \cref{eq: formula for theta} at $P_{11}$ yields 
\[\frac{1}{2} |\cos(2\theta)|=\|K^{\xi_1}\| |\frac{\omega_{\mathcal{V}}(K^{\xi_2},K^{\xi_3})}{\nu}|=\frac{1}{2}.\]
Hence $\theta=0$ and 
$\mathcal{O}_{11}$ is diffeomorphic to $S^3$. It is also the orbit of the larger group $S^1\times \mathrm{Sp}(1)$.
\begin{lma}
 The unique special Lagrangian invariant under $K_1$ is $\mathcal{O}_{11}$ which is identified with \cref{berger}.
\end{lma}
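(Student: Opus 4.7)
The plan is to combine three inputs: (i) the classification of $\SU(2)$-invariant special Lagrangian orbits via $\mu=0$ and $\nu\neq 0$, (ii) the explicit computation of $\mu_i, \nu$ in homogeneous coordinates carried out just above, and (iii) the classification of $\theta\equiv 0$ special Lagrangians as either Example \ref{s1s2} or Example \ref{berger}. Since Example \ref{s1s2} is diffeomorphic to $S^1\times S^2$ while Example \ref{berger} is a Berger sphere, identifying $\mathcal{O}_{11}$ amounts to pinning down the diffeomorphism type and verifying $\theta\equiv 0$.

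First I would argue uniqueness: from the proposition characterising $\SU(2)$-invariant special Lagrangian orbits, $\mathcal{O}_{11}$ must sit inside $\mu^{-1}(0)\cap\{\nu\neq 0\}$. The explicit formulas for $\mu_1,\mu_2+i\mu_3,\nu$ show that this locus is cut out by $f=0$ together with $(Z_2,Z_3)\neq(0,0)$. The centraliser $\mathrm{Sp}(1)\times\mathrm{Sp}(1)$ of $K_1$ acts with cohomogeneity one on $\CP^3$ with level sets of $f$ as orbits, hence acts transitively on $\{f=0\}$. Therefore the $K_1$-invariant special Lagrangian is unique up to isometry, and one may take it to be $\mathcal{O}_{11}$, the orbit of $P_{11}=[1,0,\sqrt{2},0]$.

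Next I would evaluate the angle function $\theta$. At $P_{11}$, the local coordinate expressions for the Killing vector fields show that $K^{\xi_1}$ spans the kernel of the projection onto $\mathcal V$, so $K^{\xi_1}$ plays the role of $w_1$ in the formula \eqref{eq: formula for theta}. Combined with the already-computed ratio $|\omega_{\mathcal V}(K^{\xi_2},K^{\xi_3})/\nu|=\tfrac12/\|K^{\xi_1}\|$ at $P_{11}$, this gives $|\cos(2\theta)|=1$, hence $\theta(P_{11})=0$. By homogeneity of $\mathcal{O}_{11}$ under $K_1$ (and the fact that $\theta$ is an invariant of the tangent space up to the $H$-action), $\theta\equiv 0$ on the whole orbit.

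Finally I would identify the diffeomorphism type: since $\mathcal{O}_{11}$ is three-dimensional and $K_1\cong S^3$ acts transitively, the stabiliser of $P_{11}$ in $K_1$ is discrete; a direct check on the coordinates shows it is in fact trivial, so $\mathcal{O}_{11}\cong S^3$. Among the two classified $\theta\equiv 0$ examples, only Example \ref{berger} is diffeomorphic to $S^3$, hence $\mathcal{O}_{11}$ coincides with it. The main (and only substantive) obstacle is the $\theta$ computation at $P_{11}$, i.e.\ correctly identifying which tangent direction lies in $\mathcal{V}$ and checking the cosine formula; everything else follows by assembling prior results.
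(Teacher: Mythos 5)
Your proposal follows essentially the same route as the paper: cut out the special Lagrangian locus by $\mu=0$, $\nu\neq 0$ (i.e.\ $f=0$), use transitivity of the centraliser $\mathrm{Sp}(1)\times\mathrm{Sp}(1)$ on $\{f=0\}$ for uniqueness, evaluate $\theta=0$ at $P_{11}$ via \cref{eq: formula for theta} with $K^{\xi_1}$ horizontal, and then match with \cref{berger} by the diffeomorphism type $S^3$ of $\mathcal{O}_{11}$. The only cosmetic difference is that you verify $\mathcal{O}_{11}\cong S^3$ by checking the stabiliser of $P_{11}$ in $K_1$ is trivial, whereas the paper notes it is an orbit of the larger group $S^1\times\mathrm{Sp}(1)$; this is an equivalent observation.
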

\begin{rmk}
The multi-moment map for $\T^2$ torus symmetry is an eigenfunction of the Laplace operator on $M$, cf. \cite[Lemma 3.1]{russo2019nearly}. However, this is not the case for the $\SU(2)$ multi-moment map $\nu$, as it is non-positive everywhere, so
$\int_M \nu \mathrm{vol}_M <0$, this integral vanishes for eigenfunctions of Laplace operator.
\end{rmk}
\subsubsection{$K_2$}
The group $K_2$ lies inside $\mathrm{U}(2)\subset \mathrm{Sp}(2)$. Let $\xi_0=\mathrm{diag}(i,i)\in \mathfrak{sp}(2)$, which commutes with all elements in the Lie algebra of $K_2$. Again, we compute
\begin{align*}
K^{\xi_0}&=2\re(i Z_1 \frac{\partial}{\partial Z_1}) \qquad K^{\xi_1}=2\re(-2i(Z_1 \frac{\partial}{\partial Z_1}+Z_2 \frac{\partial}{\partial Z_2})), \\
K^{\xi_2}+iK^{\xi_3}&=2(Z_3-Z_1 Z_2 \frac{\partial}{\partial Z_1}-(1+Z_2^2) \frac{\partial}{\partial Z_2}-(Z_1+Z_2 Z_3) \frac{\partial}{\partial Z_3}).
\end{align*}
For $K_2$, the map $\nu$ is equal to
\[-8\frac{|Z_0 Z_1+Z_2 Z_3|^2}{|Z|^2}\]
by \cref{eq: pull back local frame}.
We apply \cref{repsu2} and compute $\mu$ on the set $Z_2=0$ and $Z_1=r\geq 0$, and w.l.o.g we assume $Z_0=1$.
Then we have 
\begin{align*}
 \mu_1 =-2|Z|^{-4} (-1+r^4+4 |Z_3|^2-|Z_3|^4), \quad \mu_2- i \mu_3=-4 i |Z|^{-4}rZ_3(-2+r^2+|Z_3|^2).
\end{align*}
The set $\nu=0$ is a $J_1$-holomorphic quadric and hence diffeomorphic to $S^2\times S^2$. The action of $\mathrm{U}(2)$ on this quadric is of cohomogeneity one. The principal orbit is $S^2\times S^1$ and the singular orbit $S^2$.\par If $\nu=0$ then $\nu$ vanishes if $r=0$ and $|Z_3|=\sqrt{2+\sqrt{3}}$. Denote this point by $P_{21}$, the $\mathrm{U}(2)$ orbit $\mathcal{O}_{21}$ is special Lagrangian and $K^{\xi_0}$ is horizontal on $\nu^{-1}(0)$. We compute via \cref{eq: formula for theta} 
\[\frac{1}{2} |\cos(2\theta)|=\|K^{\xi_0}\| \frac{|\omega_{\mathcal{V}}(K^{\xi_2},K^{\xi_3})|}{|\imP(K^{\xi_0},K^{\xi_2},K^{\xi_3})|}=\frac{1}{2},\]
i.e. $\theta=0$.\par
If $\nu \neq 0$ then $r\neq 0$ and $\nu=0$ only occurs for $Z_3=0$ and $r=1$. Denote this point by $P_{22}$ and note $\nu_{\mathcal{V}}$ vanishes on $P_{22}$. Hence, $\theta=\pi/4$ and the orbit $\mathcal{O}_{22}$ is diffeomorphic to $\mathbb{RP}^3$.
\begin{lma}
 All special Lagrangians that admit a $K_2$ action are $\mathcal{O}_{21}, \mathcal{O}_{22}$ which corresponds to \cref{s1s2} and \cref{standardRP3} respectively.
\end{lma}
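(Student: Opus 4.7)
The classification is essentially a matter of assembling the computations preceding the statement. My plan is as follows. First, any $K_2$-invariant special Lagrangian $L$ is preserved by $K_2$, so every $K_2$-orbit in $L$ lies in $\mu^{-1}(0)$. Applying \cref{repsu2} to the representation $\C^4\cong S^1(\C^2)\oplus S^1(\C^2)$, which has no zero weight, every such orbit meets the slice $\{Z_2=0,\ Z_1=r\geq 0,\ Z_0=1\}$. I would use the explicit formulae for $\mu_1$ and $\mu_2-i\mu_3$ on this slice to reduce $\mu=0$ to a polynomial system and split into two cases. If $\nu\neq 0$ then $r>0$, and $\mu_2-i\mu_3=0$ gives either $Z_3=0$ or $r^2+|Z_3|^2=2$; the latter makes $\mu_1$ a nonzero constant, so only $P_{22}=[1,1,0,0]$ survives. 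If $\nu=0$ on the slice then $r=0$ and $\mu_1=0$ becomes $|Z_3|^4-4|Z_3|^2+1=0$; the two roots are related by the $\U(2)$-Weyl-group element swapping $Z_0\leftrightarrow Z_3$, so they yield a single orbit through $P_{21}=[1,0,0,\sqrt{2+\sqrt{3}}]$.

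Next I would upgrade this to a statement about the full $L$, not just its $K_2$-orbits. If the principal $K_2$-orbit in $L$ is three-dimensional, then $L$ is a single $K_2$-orbit and must be $\mathcal{O}_{22}$. Otherwise the principal orbit is two-dimensional, hence an $S^2$ lying in $\omega^{-1}(0)\cap\nu^{-1}(0)$; since $\nu^{-1}(0)$ is the $J_1$-quadric $S^2\times S^2$ on which the centraliser $\U(2)$ of $K_2$ acts with cohomogeneity one (principal orbit $S^2\times S^1$, singular orbit $S^2$), $L$ is itself forced to be one of these $\U(2)$-orbits, and the only one lying in $\mu^{-1}(0)$ is $\mathcal{O}_{21}$.

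Finally I would match each of $\mathcal{O}_{21}$ and $\mathcal{O}_{22}$ with an example from the previous section using the angle function $\theta$. At $P_{22}$, \cref{eq: formula for theta} gives $\theta=\pi/4$; by $\SU(2)$-homogeneity $\theta\equiv\pi/4$ on $\mathcal{O}_{22}$, so \cref{prop: storm Lagrangian} makes $\mathcal{O}_{22}$ a circle bundle over a homogeneous superminimal surface in $S^4$, namely the totally geodesic $S^2$ (yielding \cref{standardRP3}) or the Veronese (yielding \cref{Chiang-Lagrangian}); the diffeomorphism type $\mathcal{O}_{22}\cong\mathbb{RP}^3$ singles out the first. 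At $P_{21}$, \cref{eq: formula for theta} gives $\theta=0$; by \cref{eq: structure equations eqns theta0} the only $\theta\equiv 0$ examples are \cref{s1s2} and \cref{berger}, and since the latter has already been identified with $\mathcal{O}_{11}$ in the $K_1$-case, while $\mathcal{O}_{21}$ is diffeomorphic to $S^2\times S^1$ rather than $S^3$, it must be \cref{s1s2}.

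The hardest part, I anticipate, will be the subcase with two-dimensional principal $K_2$-orbits: one must argue that $L$ cannot be assembled from $K_2$-orbits in $\nu^{-1}(0)$ in a more exotic way than a $\U(2)$-orbit, which amounts to combining the cohomogeneity-one structure of $\U(2)$ on the quadric with the Lagrangian condition to force the extra symmetry.
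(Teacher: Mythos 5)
Your route is the paper's: restrict to $\mu^{-1}(0)$, pass to the slice $\{Z_0=1,\,Z_2=0,\,Z_1=r\geq 0\}$ via \cref{repsu2}, split according to $\nu$, and identify the resulting orbits through $\theta$ and their topology; the slice computations you quote ($Z_3=0$, $r=1$ when $\nu\neq 0$; $r=0$, $|Z_3|^4-4|Z_3|^2+1=0$ when $\nu=0$) are correct. However, your treatment of the two roots $|Z_3|^2=2\pm\sqrt{3}$ fails as stated: they do \emph{not} lie on a single orbit. An element of $\U(2)$ carrying $[1,0,0,Z_3]$ to a point of the same form must have vanishing off-diagonal $K_2$-part, and then it can only rotate the phase of $Z_3$; in particular no Weyl-group element of this $\U(2)$ swaps $Z_0\leftrightarrow Z_3$ (that swap is not quaternionic-linear, so it is not even in $\Sp(2)$). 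The two roots therefore give two distinct special Lagrangian $\U(2)$-orbits. They are exchanged by a nearly K\"ahler automorphism normalising $\U(2)$ but not contained in it, e.g.\ the quaternionic scalar $\mathrm{diag}(j,j)\in\Sp(2)$, which sends $[1,0,0,w]$ into the orbit of $[1,0,0,1/w]$ and $(2-\sqrt{3})^{-1}=2+\sqrt{3}$; so your conclusion survives only because the classification is read up to ambient isometry, which is also how the paper (silently recording just the root $\sqrt{2+\sqrt{3}}$) must be read.

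The step you defer as the hardest, ruling out an ``exotic'' assembly of two-dimensional orbits, does not require the cohomogeneity-one geometry of the quadric or any further use of the Lagrangian condition. Every two-dimensional $K_2$-orbit in $\mu^{-1}(0)\cap\nu^{-1}(0)$ passes through a slice point $[1,0,0,Z_3]$ with $|Z_3|^2=2\pm\sqrt{3}$, and the union of these $2$-spheres over all phases of $Z_3$ is exactly the two three-dimensional $\U(2)$-orbits, since the central circle of $\U(2)$ accounts precisely for the phase. Hence a connected three-dimensional $K_2$-invariant Lagrangian all of whose orbits are two-dimensional is an open subset of one of these two orbits and coincides with it (maximality/compactness); together with your three-dimensional-orbit case, which is fine, this closes the argument along the same lines as the paper. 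Your final identifications via $\theta$ (constant by homogeneity) and the diffeomorphism types $S^1\times S^2$ versus $S^3$, respectively $\mathbb{RP}^3$ versus the Chiang Lagrangian, match the paper's.
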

\subsubsection{$K_3$}
To compute the Killing vector fields for $K_3$ we need the explicit description of $K_3\subset \SU(4)$  
\begin{align*} 
K_3=\left\{\left( 
\begin{array}{cccc}
a^{3}               & -\sqrt{3} a^{2} \overline{b} & \sqrt{3} a \overline{b}^{2}        & - \overline{b}^{3}\\
\sqrt{3} a^{2}b  & a (|a|^{2} - 2 |b|^{2})         & - \overline{b} (2|a|^{2} - |b|^{2}) & \sqrt{3} \overline{a} \overline{b}^{2}\\
\sqrt{3} a b^{2} & b (2|a|^{2} - |b|^{2})          & \overline{a} (|a|^{2} - 2 |b|^{2})  &-\sqrt{3} \overline{a}^{2} \overline{b}\\
b^{3}               & \sqrt{3} \overline{a} b^{2}  & \sqrt{3} \overline{a}^{2} b        & \overline{a}^{3}
\end{array} 
\right)
\mid (a,b)\in S^3\subset \C^2 \right\},
\end{align*}
see for example \cite{kawai2018second}. Now we can compute the Killing vector fields for $K_3$ on $\mathbb{A}_0$
\begin{align*}
K^{\xi_1}=&2\im(3Z_1 \frac{\partial}{\partial Z_1}+2Z_2 \frac{\partial}{\partial Z_2}+Z_3 \frac{\partial}{\partial Z_3}) \\
K^{\xi_2}=&\re(-\sqrt{3}(Z_1 Z_3+Z_2) \frac{\partial}{\partial Z_1}+(\sqrt{3}Z_1-(2+\sqrt{3}Z_2)Z_3) \frac{\partial}{\partial Z_2}\\
&+(2Z_2-\sqrt{3}(1+Z_3^2)) \frac{\partial}{\partial Z_3} ) \\
K^{\xi_3}=&\im(\sqrt{3}(-Z_1 Z_3+Z_2) \frac{\partial}{\partial Z_1}+(\sqrt{3}Z_1+(2-\sqrt{3}Z_2)Z_3) \frac{\partial}{\partial Z_2}\\
&+(2Z_2-\sqrt{3}(-1+Z_3^2)) \frac{\partial}{\partial Z_3}).
\end{align*}
Again, we apply \cref{repsu2} and restrict ourselves to compute $\nu$ and $\nu$ for $Z_0=1,Z_2=r>0$ and $Z_3=0$. Let furthermore $Z_1=\exp(i\phi)s$, then by \cref{eq: pull back local frame}
\begin{align*}
 \mu_1&=2|Z|^{-4} \left(5 r^4-4 r^2 s^2-16 r^2-3 s^4+3\right)\\ 
  \mu_2&=|Z|^{-4}4 r s \sin(\phi ) \left(r (\sqrt{3} r-9)+\sqrt{3} \left(s^2-8\right)\right) \\
  \mu_3&= |Z|^{-4} 4 r s \cos(\phi ) \left(r (-\sqrt{3} r-9)-\sqrt{3} \left(s^2-8\right)\right)  \\
 \nu&=|Z|^{-3}8 \left(4 r^4 \left(s^2-5\right)-12 \sqrt{3} r^3 s^2 \cos (2 \phi )+3 r^2
   \left(s^2+4\right)-9 \left(s^4+s^2\right)\right).
\end{align*}
Hence, the only solutions of $\mu=(0,0,0)$ are $(r,s)\in\{(0,1),(\sqrt{3},0),(1/{\sqrt{5}},0)\}$. The solutions with $r=0$ are in the $\mathrm{U}(1)$ orbit of the point $P_{31}=[1,0,1,0]$. The point $[1,\sqrt{3},0,0]$ is also in the same $K_3$ orbit as $P_{31}$. So, it suffices to consider the points $P_{31}$ and $P_{32}=[1,1/{\sqrt{5}},0,0]$.\par
Note that $\nu(P_{31})=-18$ and $\nu(P_{32})=200/27$ which must hence be the minimum and maximum of $\nu$ respectively. The map $\mu_{\mathcal{V}}$ vanishes at $P_{31}$ and hence the orbit $\mathcal{O}_{31}$ satisfies $\theta=0$ and is in fact the Chiang Lagrangian. 
\par
Furthermore, $\mu_{\mathcal{V}}(P_{32})=(-\frac{14}{9},0,0)$ which means that $K^{\xi_1}$ is horizontal at $P_{32}$. By \cref{eq: formula for theta} we have
\[\frac{1}{2} |\cos(2\theta)|=\|K^{\xi_1}\| \frac{|\nu_{\mathcal{V}}|}{|\nu|}=\frac{7}{5\sqrt{10}}, \quad \theta=\frac{1}{2}\arccos(\frac{7 \sqrt{2}}{5 \sqrt{5}})\approx 0.24\]
on $\mathcal{O}_{32}$.
\begin{lma}
 All $K_3$ invariant special Lagrangians are given by the orbits $\mathcal{O}_{31}$ and $\mathcal{O}_{32}$, which correspond to \cref{Chiang-Lagrangian} and \cref{exotic-solution} respectively.
\end{lma}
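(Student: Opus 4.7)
My plan is to mimic the strategy used for $K_1$ and $K_2$ in the preceding lemmas, exploiting the explicit matrix description of $K_3 \subset \SU(4)$ (the irreducible representation of $\SU(2)$ on $S^3(\C^2) \cong \C^4$). First I would compute the fundamental vector fields $K^{\xi_1}, K^{\xi_2}, K^{\xi_3}$ on the affine chart $\mathbb{A}_0 = \{Z_0 \neq 0\}$ by differentiating the one-parameter subgroups of $K_3$ at the identity. Having done this, I plug these vector fields into the coordinate expressions for $\omega$ and $\imP$ provided by \cref{eq: pull back local frame} to obtain the multi-moment-type maps $\mu_1, \mu_2, \mu_3$ and $\nu$ as explicit rational functions on $\mathbb{A}_0$.

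To locate the $\SU(2)$-invariant special Lagrangians, I must solve $\mu(x) = 0$ with $\nu(x) \neq 0$. Rather than attack this on all of $\mathbb{A}_0$, I invoke \cref{repsu2}: since the representation of $K_3$ on $\C^4$ is $\rho_3$, all of whose weights $(3,1,-1,-3)$ are non-zero, every $K_3$-orbit meets any $\U(1)$-invariant hyperplane. I then choose such a slice, explicitly $\{Z_3 = 0,\ Z_2 = r \geq 0\}$ and parametrise $Z_1 = e^{i\phi} s$, reducing the problem to finding zeros of $\mu$ in the variables $(r, s, \phi)$. On this slice $\mu$ becomes a tractable polynomial system, and a direct algebraic check shows that the only real solutions with $\nu \neq 0$ are $(r,s) = (0,1)$, $(r,s) = (\sqrt{3}, 0)$, and $(r,s) = (1/\sqrt{5}, 0)$.

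Next I need to determine which orbits these points lie on. The two solutions with $s=0$ are in the $K_3$-orbit of a single point because $[1, \sqrt{3}, 0, 0]$ and $[1, 0, 1, 0]$ are related by a $K_3$ element (this can be verified from the matrix description). This yields two distinct $K_3$ orbits, namely $\mathcal{O}_{31}$ through $P_{31} = [1,0,1,0]$ and $\mathcal{O}_{32}$ through $P_{32} = [1, 1/\sqrt{5}, 0, 0]$. To identify them with examples from the previous section, I compute the angle function $\theta$ at each base point via the formula \cref{eq: formula for theta}. At $P_{31}$, a direct calculation shows $\mu_{\mathcal{V}}$ vanishes so $\theta = 0$ and by the earlier classification the orbit must be the Chiang Lagrangian (\cref{Chiang-Lagrangian}). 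At $P_{32}$, the vector $K^{\xi_1}$ is horizontal, and the formula produces $\theta = \tfrac{1}{2}\arccos(\tfrac{7\sqrt{2}}{5\sqrt{5}})$, matching \cref{exotic-solution} precisely.

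The main obstacle I anticipate is twofold: first, the algebraic cleanup of the polynomial system $\mu = 0$, which with four unknowns and quartic expressions requires care to ensure no solutions are missed; and second, verifying that the two solutions $(r,s) = (\sqrt{3},0)$ and $(r,s) = (1/\sqrt{5},0)$ really give distinct orbits while $(0,1)$ and $(\sqrt{3},0)$ merge into one. This last orbit-identification step hinges on producing an explicit element of $K_3$ relating $[1,\sqrt{3},0,0]$ to $[1,0,1,0]$, which I would do by inspecting the matrix form of $K_3$ at a well-chosen $(a,b) \in S^3$. Once these checks are in place, the classification follows.
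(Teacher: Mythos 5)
Your overall strategy is exactly the paper's: compute the Killing fields of $K_3$ on $\mathbb{A}_0$ from the explicit matrix form, restrict $\mu$ and $\nu$ to a $\U(1)$-invariant slice using \cref{repsu2}, solve $\mu=0$ there to obtain the three solutions $(0,1),(\sqrt{3},0),(1/\sqrt{5},0)$, merge the first two into a single orbit $\mathcal{O}_{31}$, and identify the two resulting orbits by computing $\theta$ at the base points. One remark on your anticipated checks: the paper separates the two orbits not by exhibiting group elements but by the invariant function $\nu$, since $\nu(P_{31})=-18\neq 200/27=\nu(P_{32})$ and $\nu$ is constant on $K_3$-orbits; this disposes of the distinctness question immediately (your phrase ``the two solutions with $s=0$'' merge is also a slip — the solutions that lie in one orbit are $(0,1)$ and $(\sqrt{3},0)$, as you state correctly later).

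There is, however, one genuinely wrong step: at $P_{31}$ you argue ``$\mu_{\mathcal{V}}$ vanishes so $\theta=0$, and by the earlier classification the orbit must be the Chiang Lagrangian.'' Vanishing of $\mu_{\mathcal{V}}$ means that $\omega_{\mathcal{V}}$ restricts to zero on the tangent space of the orbit, and by \cref{eq: formula for theta} (equivalently, by the fact that $\omega_{\mathcal{V}}|_{W_\theta}$ has norm $\tfrac12|\cos(2\theta)|$, used in \cref{prop: representatives Lag subspaces}) this forces $\cos(2\theta)=0$, i.e.\ $\theta=\pi/4$, not $\theta=0$. Your inference would in fact be self-defeating: the Lagrangians with $\theta\equiv 0$ are completely classified as \cref{s1s2} and \cref{berger}, neither of which is the Chiang Lagrangian, whereas the Chiang Lagrangian belongs to the $\theta\equiv\pi/4$ family (the circle bundle over the Veronese surface), as recorded in \cref{thm: classification SL orbits}. (The sentence in the paper's $K_3$ computation asserting $\theta=0$ at $P_{31}$ is a typo; its summary table lists $\pi/4$.) The repair is immediate: $\theta=\pi/4$ on $\mathcal{O}_{31}$, and since the Chiang Lagrangian is itself a special Lagrangian $K_3$-orbit it must be one of $\mathcal{O}_{31},\mathcal{O}_{32}$; as $\theta\approx 0.24\neq\pi/4$ on $\mathcal{O}_{32}$, it is $\mathcal{O}_{31}$. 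With this correction your identification of $\mathcal{O}_{32}$ with \cref{exotic-solution} via $\theta=\tfrac12\arccos\bigl(\tfrac{7\sqrt{2}}{5\sqrt{5}}\bigr)$ coincides with the paper's argument, and the classification follows as you describe.
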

As remarked after \cref{prop: nk class}, the identity component of nearly Kähler automorphisms of $\cp$ is $\Sp(2)$.
So, combining all results of this section results in the following theorem.
\begin{thm}
\label{thm: classification SL orbits}
Every Special Lagrangian in $\CP^3$ that admits a non-trivial action of a three-dimensional group of nearly Kähler automorphisms is homogeneous and one of the following orbits.
\\
\begin{tabular}{c c c c c}
 Example & Properties & $\theta$ & Group orbit & Stabiliser group of $C$ \\ 
 \hline
 \ref{berger}&Berger Sphere & 0 & $K_1$ & $\SO(2)$ \\
 \ref{s1s2}&$S^1\times S^2$ & 0 & $\U(2)\supset K_2$ & $\SO(2)$ \\
 \ref{standardRP3}&standard $\mathbb{RP}^3$ & $\pi/4$  & $K_2$ & $\SO(3)$ (tot. geodesic)\\
 \ref{Chiang-Lagrangian}& Chiang Lagrangian & $\pi/4$ & $K_3$ & $S_3$ \\
 \ref{exotic-solution} & distinct $\mathrm{Ric}$ e'values &$\approx 0.24$ & $K_3$ & $\mathbb{Z}_2$
\end{tabular}
\end{thm}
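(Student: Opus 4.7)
The plan is to reduce the theorem to an orbit-by-orbit analysis of the three conjugacy classes of three-dimensional subgroups of $\Sp(2)$, and then to identify each resulting orbit with one of the five examples already described in Section~3. First I would invoke Proposition~1.1 together with the fact that the identity component of nearly Kähler automorphisms of $\CP^3$ is $\Sp(2)$, so that any three-dimensional group acting by automorphisms corresponds to a three-dimensional Lie subalgebra of $\mathfrak{sp}(2)$. Using the remark that every three-dimensional subgroup of $\Sp(2)$ is conjugate to one of $K_1,K_2,K_3$ (from \cref{dfn: subgroups}), it suffices to classify the $K_i$-invariant special Lagrangians for $i=1,2,3$.

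For each $i$, the preceding subsections provide the needed data: by the proposition characterising SL orbits, a $K_i$-invariant three-dimensional SL orbit must lie in the set $\mu^{-1}(0)\cap\{\nu\neq 0\}$ for the associated multi-moment-type maps. I would then invoke the three computational lemmas established in the three subsubsections: for $K_1$, the orbit $\mathcal{O}_{11}$ through $[1,0,\sqrt{2},0]$; for $K_2$, the orbits $\mathcal{O}_{21}$ and $\mathcal{O}_{22}$ through $[1,0,0,\sqrt{2+\sqrt{3}}]$ and $[1,1,0,0]$; and for $K_3$, the orbits $\mathcal{O}_{31}$ and $\mathcal{O}_{32}$ through $[1,0,1,0]$ and $[1,1/\sqrt{5},0,0]$. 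The representation-theoretic \cref{repsu2} is what keeps the case analysis finite, since it guarantees that every $K_i$-orbit meets a two-real-dimensional slice and so one only has to solve $\mu=0$ on that slice.

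To identify each $\mathcal{O}_{ij}$ with an example from Section~3, I would evaluate the angle function $\theta$ at the chosen base point using \cref{eq: formula for theta}. Selecting a point where one of the Killing fields is horizontal reduces the formula to $\tfrac{1}{2}|\cos(2\theta)|=\|K\|\,|\omega_{\mathcal{V}}(K_j,K_k)|/|\nu|$, which one computes from the explicit pullback formulas \cref{eq: pull back local frame}. This gives $\theta=0$ for $\mathcal{O}_{11}, \mathcal{O}_{21}, \mathcal{O}_{31}$ (identified with \cref{berger}, \cref{s1s2}, \cref{Chiang-Lagrangian} respectively once one distinguishes the two $\theta=0$ examples by their fundamental cubic or by which group acts), $\theta=\pi/4$ for $\mathcal{O}_{22}$ (\cref{standardRP3}), and $\theta=\tfrac{1}{2}\arccos(7\sqrt{2}/(5\sqrt{5}))$ for $\mathcal{O}_{32}$ (\cref{exotic-solution}). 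Since each $\mathcal{O}_{ij}$ is a group orbit, every such SL is automatically homogeneous.

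The main obstacle is the bookkeeping for $K_3$: the explicit matrix embedding of $K_3\subset\SU(4)$ via $S^3(\C^2)$ produces Killing fields that are cubic in the affine coordinates, so the equation $\mu=0$ becomes a polynomial system whose real solutions one has to extract. Invoking \cref{repsu2} to restrict to the slice $Z_0=1,\,Z_2=r>0,\,Z_3=0,\,Z_1=s\exp(i\phi)$ is essential to make the computation tractable; afterwards, verifying that the two $K_3$-orbits obtained are inequivalent and distinct from the $K_1,K_2$ orbits reduces to checking $\theta$, the stabiliser of the fundamental cubic, and the Ricci spectrum, as tabulated in the statement.
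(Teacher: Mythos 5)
Your strategy is the same as the paper's: use that the identity component of the automorphism group is $\Sp(2)$, reduce to the three conjugacy classes $K_1,K_2,K_3$, observe that an invariant special Lagrangian must lie in $\mu^{-1}(0)$ with $\nu\neq 0$, use \cref{repsu2} to restrict the computation of $\mu$ to a torus-invariant slice, solve $\mu=0$ there, and then identify the resulting orbits with the examples of the previous section by evaluating $\theta$ via \cref{eq: formula for theta} at a point where a Killing field is horizontal. Homogeneity is then automatic since the solutions are single group orbits.

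There is, however, a concrete error in your identification step for $K_3$. At $P_{31}=[1,0,1,0]$ the map $\mu_{\mathcal{V}}$ vanishes, and feeding this into \cref{eq: formula for theta} gives $\cos(2\theta)=0$, i.e.\ $\theta=\pi/4$ on $\mathcal{O}_{31}$, not $\theta=0$ as you assert (the same mechanism gives $\theta=\pi/4$ at $P_{22}$, which you do treat correctly). Your grouping of $\mathcal{O}_{31}$ with the $\theta=0$ orbits cannot work: the classification of Lagrangians with $\theta\equiv 0$ shows there are exactly two, namely \cref{s1s2} and \cref{berger} (corresponding to $f=-1$ and $f=\tfrac12$), so a third $\theta=0$ orbit cannot exist, and the Chiang Lagrangian is not among them --- it belongs to the $\theta\equiv\pi/4$ class, being the circle bundle over the Veronese surface, which is also what the table you are proving records. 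The correct argument is: $\mu_{\mathcal{V}}(P_{31})=0$ forces $\theta=\pi/4$, so $\mathcal{O}_{31}$ fibres over a homogeneous superminimal surface in $S^4$; since $K_3$ acts irreducibly on $\R^5$ this surface is the Veronese curve, and $\mathcal{O}_{31}$ is the Chiang Lagrangian of \cref{Chiang-Lagrangian}. With that correction (and with \cref{berger} and \cref{s1s2} distinguished by their cubics, e.g.\ the value of $f$, or by their topology $S^3$ versus $S^1\times S^2$, rather than by the stabiliser of $C$, which is $\SO(2)$ in both cases), your proposal reproduces the paper's proof, including $\theta=\tfrac12\arccos\bigl(\tfrac{7\sqrt{2}}{5\sqrt{5}}\bigr)$ for $\mathcal{O}_{32}$ matching \cref{exotic-solution}.
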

For the definition of the $\SU(2)$ subgroups $K_i$ see \cref{dfn: subgroups}.
\subsection{The Flag Manifold} 
\Cref{thm: classification SL orbits} classifies homogeneous special Lagrangians and also rules out the existence of special Lagrangians admitting a cohomogeneity one action of a three-dimensional group of nearly Kähler automorphisms. The aim of this section is to prove the analogous statement for the nearly Kähler flag manifold $\mathbb{F}=\SU(3)/{\T^2}$. The homogeneous special Lagrangians in the flag manifold are classified in \cite{storm2020lagrangian}, so we restrict ourselves to the cohomogeneity-one case.\par
We could achieve this by computing the moment-type maps and determine the zero sets, as we did for $\cp$. However, the statement can also be shown by analysing the group actions of 3-dimensional subgroups of $\SU(3)$, which is the identity component of nearly Kähler automorphisms as remarked after \cref{prop: nk class}. We will show the set of elements with one-dimensional stabilisers are two-dimensional, so they cannot be special Lagrangian. To understand the action of three-dimensional subgroups of $\SU(3)$ on the flag manifold we exploit the fact that the flag manifold is an adjoint orbit for $\SU(3)$. \par
Up to conjugation there are two three-dimensional subgroups of $\SU(3)$ the standard $\SO(3)\subset \SU(3)$ and the $\SU(2)$ subgroup fixing the element $(0,0,1)$.
Consider the adjoint action of $\SU(3)$ on its lie algebra $\mf{su}(3)$. Every element $A\in \mf{su}(3)$ is then conjugate to a diagonal matrix, the $\SU(3)$ orbits are distinguished by the set of purely imaginary eigenvalues.\par
Hence, the $\SU(3)$ orbits are the level sets of the functions
\begin{align*}
 \rho_1(A)=\mathrm{Tr}(A^2) \quad \rho_2(A)=\mathrm{Tr}(A^3).
\end{align*}
There are three orbit types. The principal stabiliser type is a maximal torus in $\SU(3)$. Every element with distinct eigenvalues is of principal type. If $A$ has a repeated eigenvalue the stabiliser type is $\SU(2)\times \U(1)$, unless all eigenvalues are zero. \par
So we fix an element $A\in\mf{su}(3)$ with distinct eigenvalues and identify the flag manifold $\SU(3)/\mathbb{T}^2$ with the adjoint orbit of $A$.
Our aim is to determine the set of elements in $\mathbb{F}$ with one-dimensional stabiliser under the action of $\SO(3)\subset \SU(3)$ and $\SU(2)$. 
\begin{prop}
 Every element in $\mathfrak{su}(3)$ with non-principal stabiliser for either the action of $\SO(3)$ or $\SU(2)$ has a representative in the set
 \[S=\left\{ \left(
\begin{array}{ccc}
 i \mu  & -\lambda  & 0 \\
 \lambda  & i \mu  & 0 \\
 0 & 0 & -2 i \mu  \\
\end{array}
\right) \mid \lambda,\mu \in \R \right\}. \]
\end{prop}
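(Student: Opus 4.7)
The plan is to reduce the statement to a Lie-algebra condition and then perform a single explicit computation covering both cases at once. Since the flag manifold $\mathbb{F}$ is an $\SU(3)$-adjoint orbit of a regular element, the principal stabiliser for either the $\SO(3)$ or the $\SU(2)$ action is discrete; hence having a non-principal stabiliser at $A$ is equivalent to the centraliser of $A$ in the acting group being positive-dimensional. Passing to the infinitesimal level, this means there is a non-zero $X$ in $\mf{so}(3)$ or $\mf{su}(2)$ respectively with $[X,A]=0$.

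The key observation is that the matrix
\[
X_0 = \begin{pmatrix} 0 & -1 & 0 \\ 1 & 0 & 0 \\ 0 & 0 & 0 \end{pmatrix}
\]
lies simultaneously in $\mf{so}(3)$ and in the $\mf{su}(2)$ that fixes $(0,0,1)$. The adjoint action of $\SO(3)$ on $\mf{so}(3)$ is transitive on spheres (all non-zero elements are conjugate up to positive scaling), and the analogous statement holds for $\SU(2)$ acting on $\mf{su}(2)$. Therefore, after replacing $A$ by a conjugate under the acting group, I may assume that the stabilising generator equals a non-zero multiple of $X_0$. This reduces the problem to determining the centraliser of the single element $X_0$ inside $\mf{su}(3)$.

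Finally, I would compute $[X_0, A] = 0$ explicitly for $A = (a_{ij}) \in \mf{su}(3)$. A short matrix multiplication forces $a_{13}=a_{23}=a_{31}=a_{32}=0$, together with $a_{11}=a_{22}$ and $a_{12}=-a_{21}$. Imposing skew-Hermiticity and tracelessness then shows that $a_{11}=a_{22}$ is purely imaginary, $a_{12}$ is real, and $a_{33}=-2a_{11}$. Setting $a_{11}=i\mu$ and $a_{12}=-\lambda$ with $\mu,\lambda\in\R$ puts $A$ into exactly the form defining $S$. The only substantive step is the unified reduction to $X_0$; once that is in place, the verification is a routine $3\times 3$ calculation, so no real obstacle arises.
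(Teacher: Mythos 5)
Your argument is correct, but it is organised differently from the paper's. The paper treats the two groups separately: it decomposes $\mf{su}(3)$ as an $\SO(3)$-module into $\Lambda^2(\R^3)\oplus i S^2_0(\R^3)$ (respectively as an $\SU(2)$-module into $\mf{su}(2)\oplus\C^2\oplus\R$), normalises the component with positive-dimensional stabiliser to $\mathrm{diag}(i\mu,i\mu,-2i\mu)$, and then determines which remaining components are preserved by a one-dimensional subgroup of its stabiliser. You instead normalise the infinitesimal stabiliser itself: a positive-dimensional stabiliser gives a non-zero commuting $X$ in $\mf{so}(3)$ or $\mf{su}(2)$, both groups act transitively on rays in their Lie algebras, and both Lie algebras contain the common element $X_0$, so after conjugation you reduce to computing the centraliser of $X_0$ in $\mf{su}(3)$, which is exactly $S$. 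This buys a single unified computation covering both groups and makes the meaning of $S$ transparent (it is the centraliser of $X_0$ intersected with $\mf{su}(3)$), at the cost of losing the explicit orbit-type information ($\O(2)$ stabilisers, the role of repeated eigenvalues) that the module decompositions exhibit. One caveat, shared with the paper: you read ``non-principal stabiliser'' as ``positive-dimensional stabiliser''. Strictly, non-principal orbits also include exceptional ones with finite non-trivial stabiliser (e.g.\ $i\,\mathrm{diag}(1,2,-3)$ for $\SO(3)$, whose stabiliser is a Klein four-group and which has no representative in $S$), so the equivalence you assert in your first step is only valid under this reading; since the subsequent cohomogeneity-one application only requires the one-dimensional-stabiliser locus, this is the intended interpretation and your proof, like the paper's, covers exactly what is needed. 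Also note that your justification of discreteness of the principal stabiliser via $\mathbb{F}$ being a regular adjoint orbit is slightly off-target, since the proposition concerns all of $\mf{su}(3)$; but the principal stabiliser of either action on $\mf{su}(3)$ is indeed discrete, so nothing breaks.
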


\begin{proof}
 With respect to $\SO(3)$ the adjoint action on $\mf{su}(3)$ splits as $\Lambda^2(\R^3)\oplus i S^2_0(\R^3)$. It is known that the action of $\SO(3)$ on $S^2_0(\R^3)$ is irreducible and has trivial stabiliser unless the element in $S^2_0(\R^3)$ has repeated eigenvalues, in which case the stabiliser is $\O(2)$. Every such element is conjugate to $\mathrm{diag}(\mu,\mu,-2\mu)$. Let $A\in \Lambda^2(\R^3)$, the stabiliser of $A$ in $\SO(3)$ intersects the stabiliser of $\mathrm{diag}(\mu,\mu,-2\mu)$ in a one-dimensional set if and only if $A$ is of the form 
 \[A_{\lambda}=\left(
\begin{array}{ccc}
 0  & -\lambda  & 0 \\
 \lambda  & 0  & 0 \\
 0 & 0 & 0  \\
\end{array}
\right), \]
which implies the statement for $\SO(3)$.\par With respect to the subgroup $\SU(2)$, the representation splits as $\mf{su}(2)\oplus \C^2 \oplus \R$ where the action on the first summand is the adjoint action, is irreducible on the second summand and trivial on the third summand. For the stabiliser to be non-trivial the component in $\C^2$ has to vanish. The trivial component is spanned by the element $\mathrm{diag}(\mu,\mu,-2\mu)$. Finally, every element in $\mf{su}(2)$ is conjugate to $A_{\lambda}$ under the $\SU(2)$ action.
\end{proof}
\begin{thm}
There are no special Lagrangians in $\mathbb{F}$ which admits a cohomogeneity one action of nearly Kähler automorphisms.
\end{thm}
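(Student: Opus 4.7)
The plan is to realise $\mathbb{F}$ as an adjoint $\SU(3)$-orbit of a diagonal element $A_0 \in \mathfrak{su}(3)$ with three distinct imaginary eigenvalues, so that $\mathbb{F}$ coincides with the common level set $\{\rho_1 = c_1,\ \rho_2 = c_2\}$ of the conjugation invariants $\rho_1(A)=\mathrm{Tr}(A^2)$ and $\rho_2(A)=\mathrm{Tr}(A^3)$. By the remark after \cref{prop: nk class}, the identity component of nearly Kähler automorphisms of $\mathbb{F}$ is $\SU(3)$, so any three-dimensional subgroup $G$ is conjugate to $\SO(3)$ or to the $\SU(2)$ stabiliser of $(0,0,1)$, and it suffices to handle these two cases. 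This is exactly the framework of the preceding proposition.

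\textbf{Dimension count for cohomogeneity one.} Suppose for contradiction that $L^3\subset\mathbb{F}$ is a special Lagrangian carrying a cohomogeneity-one action of such a $G$. Then principal $G$-orbits in $L$ are two-dimensional, so the principal $G$-stabiliser along $L$ has dimension $\dim G - 2 = 1$. Consequently an open dense subset of $L$ consists of points whose $G$-stabiliser has dimension at least one, i.e.\ non-principal for the $G$-action on $\mathbb{F}$ (whose generic stabiliser is $0$-dimensional because $\dim\mathbb{F}=6>3=\dim G$). By the preceding proposition, every such point is $G$-conjugate to an element of the two-dimensional slice
\[
S=\left\{\begin{pmatrix} i\mu & -\lambda & 0 \\ \lambda & i\mu & 0 \\ 0 & 0 & -2i\mu \end{pmatrix}\,\middle|\ \lambda,\mu\in\R\right\},
\]
so the set of non-principal points in $\mathbb{F}$ is contained in $G\cdot(S\cap\mathbb{F})$.

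\textbf{Bounding the non-principal locus.} Now I estimate $\dim\bigl(G\cdot(S\cap\mathbb{F})\bigr)$. A short direct computation shows that $\rho_1|_S$ and $\rho_2|_S$ are non-constant polynomials in $(\lambda,\mu)$ (for instance, $\rho_1|_S=-6\mu^2-2\lambda^2$), so that the intersection $S\cap\mathbb{F}$ is cut out of the two-dimensional set $S$ by two independent invariants, hence is a finite set for the values of $(c_1,c_2)$ corresponding to the nearly Kähler flag manifold. Since every element of $S$ has $G$-stabiliser of dimension $\geq 1$ by construction, each $G$-orbit through such a point has dimension at most $\dim G - 1 = 2$. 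Therefore $G\cdot(S\cap\mathbb{F})$ is a finite union of $G$-orbits of dimension at most two, and in particular has dimension at most $2$.

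\textbf{Conclusion and main obstacle.} An open subset of the three-dimensional submanifold $L$ would then be contained in a subset of $\mathbb{F}$ of dimension at most two, which is impossible. Hence no such $L$ exists. The main technical point is to verify that for the specific $(c_1,c_2)$ defining the nearly Kähler adjoint orbit $\mathbb{F}$ the intersection $S\cap\mathbb{F}$ is genuinely zero-dimensional, and to check that this same argument handles both $G=\SO(3)$ and $G=\SU(2)$ simultaneously---both are already reduced by the previous proposition to the common slice $S$, so the work essentially consists of a routine invariant-theoretic check on $S$ rather than a case analysis on $G$.
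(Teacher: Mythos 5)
Your overall strategy is the same as the paper's: identify $\mathbb{F}$ with an adjoint orbit cut out by the invariants $\rho_1=\mathrm{Tr}(A^2)$, $\rho_2=\mathrm{Tr}(A^3)$, reduce via the preceding proposition to the slice $S$, and derive a contradiction from the fact that a cohomogeneity-one special Lagrangian would force a three-dimensional set of points with positive-dimensional stabiliser, while $G\cdot(S\cap\mathbb{F})$ has dimension at most two. However, there is a genuine gap at exactly the step that carries the weight of the proof: the finiteness of $S\cap\mathbb{F}$. Your justification --- that $\rho_1|_S$ and $\rho_2|_S$ are non-constant polynomials, ``hence'' the intersection is cut out by two independent invariants and is finite --- does not follow. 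Two non-constant polynomials on a two-dimensional slice can perfectly well be functionally dependent, or their common level set for the particular values $(c_1,c_2)$ determined by $\mathbb{F}$ could a priori contain a curve; non-constancy alone rules out neither. You yourself flag this as ``the main technical point'' still to be verified, so as written the argument is incomplete precisely where the paper does the work.

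The paper closes this by computing the Jacobian of $\rho=(\rho_1,\rho_2)$ restricted to $S$: its determinant is $24i\lambda^3-216i\lambda\mu^2$, which vanishes only for $\lambda\in\{0,3\mu,-3\mu\}$, and in each of those cases the matrix has a repeated eigenvalue and therefore does not lie in $\mathbb{F}$ (which consists of elements with distinct eigenvalues). Hence $\rho|_S$ has full rank at every point of $S\cap\mathbb{F}$, so this intersection is discrete, and compactness gives finiteness; only then does the dimension count you set up go through. (An alternative completion would be to check that $\rho_1|_S=-2\lambda^2-6\mu^2$ and the cubic $\mathrm{Im}\,\rho_2|_S=-6\mu(\lambda^2-\mu^2)$ shifted by the relevant constants have no common factor, but some explicit computation on $S$ is unavoidable.) A smaller point: your parenthetical claim that the generic stabiliser on $\mathbb{F}$ is zero-dimensional ``because $\dim\mathbb{F}=6>3=\dim G$'' is a non-sequitur as stated; the correct reason is again the dimension bound on the locus of points with positive-dimensional stabiliser, which itself depends on the finiteness of $S\cap\mathbb{F}$.
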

\begin{proof}
We show that for a three-dimensional group acting by nearly Kähler automorphisms on the flag manifold, the set of elements in $\mathbb{F}$ with one-dimensional stabiliser is two-dimensional.\par 
The identity component of nearly Kähler automorphisms of the flag manifold is $\SU(3)$. Since $\SU(2)$ and $\SO(3)$ are the only three-dimensional subgroups of $\SU(3)$ it suffices to show that the intersection $\mathbb{F} \cap S$ is finite.
 Since $S$ is two-dimensional it only remains to check that the function $\rho=(\rho_1,\rho_2)$ has full rank on $S$. A direct computation shows that the determinant of the Jacobian is
 \[\det (J_\rho) =  24 i \lambda ^3-216 i \lambda  \mu ^2\]
 which vanishes if and only if $\lambda\in \{0,3 \mu,- 3\mu\}$. In each case, the resulting element in $S$ has repeated eigenvalues, so it does not lie in $\mathbb{F}$ since $A$ has distinct eigenvalues.
\end{proof}
\printbibliography
\Addresses
\end{document}